	\def\MR#1{}
\newcommand{\Z}{\mathbb{Z}}
\newcommand{\C}{\mathbb{C}}
\newtheorem{thm}{Theorem}
\numberwithin{thm}{section}
\newtheorem{prop}[thm]{Proposition}
\newtheorem{lemma}[thm]{Lemma}
\newtheorem{cor}[thm]{Corollary}
\newtheorem{question}[thm]{Question}
\newtheorem*{namedtheorem}{\theoremname}
\newcommand{\theoremname}{testing}
\theoremstyle{definition}
\newtheorem{defn}[thm]{Definition}
\newtheorem*{nameddef}{\defname}
\newcommand{\defname}{testing}
\theoremstyle{definition}
\newtheorem{rmk}[thm]{Remark}
\newtheorem{conv}[thm]{Convention}
\begin{document}
	\title{Knot Floer Homology, the Burau Representation, and Quantum $\mathfrak{gl}(1 \vert 1)$}
	\author{Joe Boninger}
	\address{Department of Mathematics, Boston College, Chestnut Hill, MA}
	\email{boninger@bc.edu}
	\maketitle
	
	\begin{abstract}
		The Burau representation of braid groups and knot Floer homology share a link to the Fox calculus. We make this connection explicit, with the following outcome: if $B$ is the full Burau matrix of any braid, and $A$ is any square submatrix of $B - \lambda I$, we define a Heegaard Floer homology theory that categorifies $\det(A)$ and is an invariant of the braid. We also describe an analogous construction for the Gassner representation. Then, we leverage the relationship between the Burau representation and quantum $\mathfrak{gl}(1 \vert 1)$ to exhibit connections between the latter and Heegaard Floer homology. We associate a bordered sutured Heegaard Floer homology group to any tangle, and give a simple, geometric proof that our invariant recovers the $U_q(\mathfrak{gl}(1 \vert 1))$ braid representation.
	\end{abstract}
	
	\section{Introduction}
	
	The full Burau representation $\psi_n$ of the $n$-strand braid group $B_n$,
	$$
	\psi_n : B_n \to GL_n(\Z[t^{\pm 1}]),
	$$
	has been studied intensely for almost one hundred years. It is known to be faithful when $n = 2$ or $n = 3$, and is unfaithful when $n \geq 5$ by work of Long, Paton, and Bigelow \cite{lopa93, big99}. The faithfulness of $\psi_4$ remains an important open problem, related to the faithfulness of the Jones representation and whether or not the Jones polynomial detects the unknot \cite{big02}.
	
	\begin{question}
		\label{q:burau}
		Is the Burau representation of the four-strand braid group faithful?
	\end{question}
	
	When restricted to the $n$-strand pure braid group, the Burau representation also admits a multi-variable generalization called the Gassner representation. Although the Gassner representation is similarly well-studied, it is unknown whether this representation is faithful for any $n$ greather than three.
	
	One source of interest in the Burau representation is its relationship to the Alexander module in knot theory. Let $\rho \in B_n$ be an $n$-strand braid, and $K \subset S^3$ its link closure. If $I_n$ is the $n$-dimensional identity matrix, and $A$ is any matrix resulting from removing one row and column from $\psi_n(\rho) - I_n$, then $A$ is an Alexander matrix for $K$---in particular, $\det(A)$ is the Alexander polynomial $\Delta_K(t)$ \cite[Proposition 3.10]{bir74}. From one point of view, this connection arises from the fact that Alexander matrices and the Burau representation can both be defined using the Fox calculus \cite[Chapter 3]{bir74}.
	
	Knot and link Floer homology, which categorify the single- and multi-variable Alexander polynomials respectively, are also closely related to the Fox calculus. This connection is explained in Rasmussen's original paper \cite[Section 3.4]{ras03}, in which he refers to knot Floer homology as ``essentially just a geometric realization of Fox calculus'' \cite[Section 2.3]{ras03}. It is therefore natural to expect a relationship between the Burau representation and knot Floer homology.
	
	In this paper, we develop this link explicitly to the fullest extent possible. Our first result shows that the determinate of any square submatrix of $\psi_n - \lambda I_n$ can be categorified by an appropriate Heegaard Floer homology theory.
	
	\begin{thm}
		\label{thm:main_burau}
		Let $\rho \in B_n$ be a braid, and let ${\bf j, k} \subset \{1, \dots, n\}$ be multi-indices of size $m$. Let $\lambda$ be indeterminate, and let $A$ be the square submatrix of $\psi_n(\rho) - \lambda I_n$ which is the intersection of the rows ${\bf j}$ and the columns ${\bf k}$. Then there exists a triply-graded Heegaard Floer homology theory $\widehat{HFB}(\rho, {\bf j}, {\bf k})$ which is an invariant of $\rho$, and whose Poincar\'e polynomial is $\det(A) \in \Z[t^{\pm 1}, \lambda]$.
	\end{thm}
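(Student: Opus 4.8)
The plan is to construct $\widehat{HFB}(\rho,\mathbf{j},\mathbf{k})$ inside the framework of bordered sutured Heegaard Floer homology (in the guise of Petkova--Vértesi's tangle Floer homology), so that \emph{invariance under the choice of braid word is inherited for free} from the invariance theorems of that theory, and then to recover $\det(A)$ by a decategorification argument in which stacking of tangles becomes matrix multiplication and a ``closure'' pairing becomes the determinant of a submatrix. The key conceptual input is exactly the Fox calculus heuristic quoted from Rasmussen in the introduction: knot Floer homology is a geometric model of the Fox Jacobian, and $\psi_n(\rho)-\lambda I_n$ (up to the indeterminate $\lambda$) is a Fox Jacobian.

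Concretely, I would fix a word $\rho=\sigma_{i_1}^{\epsilon_1}\cdots\sigma_{i_\ell}^{\epsilon_\ell}$ in the standard generators and assign to each letter $\sigma_i^{\pm1}$ its elementary crossing bimodule of tangle Floer homology over the relevant strands algebra on $n$ points, but enlarged by one extra marked point (hence one extra Alexander grading) carrying the variable $\lambda$, so that the bimodule's decategorification is $\psi_n(\sigma_i^{\pm1})$ rather than the unnormalized crossing matrix. Box-tensoring these pieces together produces a bimodule $\widehat{CFB}(\rho)$ whose homotopy type depends only on $\rho$, because the elementary bimodules already satisfy $\sigma_i\sigma_{i+1}\sigma_i\simeq\sigma_{i+1}\sigma_i\sigma_{i+1}$ and $\sigma_i\sigma_j\simeq\sigma_j\sigma_i$ for $|i-j|\ge2$ up to homotopy equivalence, and the auxiliary $\lambda$-grading does not disturb this. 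The multi-indices then enter through a capping step: one pairs $\widehat{CFB}(\rho)$ on one side with a type-$D$ module of cups/caps dictated by $\mathbf{k}$ and on the other side with a type-$A$ module dictated by $\mathbf{j}$, while simultaneously performing the partial closure of the remaining strands as the mapping cone of $\widehat{CFB}(\rho)$ against the identity bimodule (this is where the $-\lambda I_n$ term is produced, with $\lambda$ recording the meridian of the closure arc). Taking homology of the resulting complex defines the triply-graded group $\widehat{HFB}(\rho,\mathbf{j},\mathbf{k})$, with gradings by Maslov index and by the two Alexander weights in $t$ and $\lambda$; invariance is then immediate from Zarev's pairing theorem together with homotopy-invariance of the elementary pieces.

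It then remains to compute the graded Euler characteristic. Each elementary bimodule decategorifies, in the Maslov grading and with Alexander gradings recording $t$ and $\lambda$, to the corresponding Burau matrix $\psi_n(\sigma_i^{\pm1})$; box tensor product decategorifies to matrix multiplication; so $\widehat{CFB}(\rho)$ decategorifies to $\psi_n(\rho)$. The type-$A$ and type-$D$ cap/cup modules decategorify to the projection $\pi_{\mathbf{j}}$ onto the coordinates $\mathbf{j}$ and the inclusion $\iota_{\mathbf{k}}$ of the coordinates $\mathbf{k}$, and the mapping-cone closure contributes the $-\lambda I_n$ and then takes a trace along the capped strands, which is precisely the permutation expansion of $\det(A)$ for $A=\pi_{\mathbf{j}}\bigl(\psi_n(\rho)-\lambda I_n\bigr)\iota_{\mathbf{k}}$. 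Matching the parity of the Maslov grading of a generator against the sign of the corresponding permutation, and matching $\lambda$-powers against the number of ``identity'' strands in each term, identifies the graded Euler characteristic of $\widehat{HFB}(\rho,\mathbf{j},\mathbf{k})$ with $\det(A)\in\Z[t^{\pm1},\lambda]$, which is the asserted Poincaré polynomial (Maslov grading supplying the sign).

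The step I expect to be the main obstacle is this final matching: pinning down the capping-and-closure operation precisely enough that it yields $\det$ of exactly the submatrix on rows $\mathbf{j}$ and columns $\mathbf{k}$---not a permanent, not a single entry, and not a determinant with the wrong index set removed---in the asymmetric case $\mathbf{j}\neq\mathbf{k}$, while simultaneously getting the $\lambda$-bookkeeping of the $-\lambda I_n$ term and the Maslov-parity signs to line up with the classical permutation expansion. A secondary difficulty, should one want the honest three-variable Poincaré polynomial rather than the Maslov-graded Euler characteristic, is a thinness statement: one must check that in each fixed $(t,\lambda)$-bidegree the homology is supported in a single Maslov grading, so that no cancellation occurs when one decategorifies.
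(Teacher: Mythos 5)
Your proposal takes a genuinely different route from the paper. The paper constructs a single closed multi\-pointed Heegaard diagram $\mathcal{H}(\rho,\mathbf{j},\mathbf{k})$ directly from a bridge presentation of the mirror $m(K)$ of the braid closure: the $m$ curves in $\beta$ come from neighborhoods of the overbridges indexed by $\mathbf{j}$, the $m$ curves in $\alpha$ come from surgering out the underbridges indexed by $\mathbf{k}$, and---crucially---two extra basepoints $p_L,p_R$ on the braid axis $U$ are added to the diagram. The three gradings are the Maslov grading, a grading $A_1$ coming from linking with $m(K)$ and computed via the sheet function of the noodle/chopstick calculus, and a grading $A_2$ coming from linking with $U$ and simply counting ``anchor points'' (the distinguished intersection points of $\alpha_j\cap\beta_j$ near the old $w_j$ basepoint). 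These anchor points supply the $-\lambda\delta_{jk}$ terms in the permutation expansion of $\det(\psi_n(\rho)-\lambda I_n)$, so the determinant falls out directly from the count of generators of $\widehat{CF}$ in the symmetric product. Invariance is the standard Heegaard-move argument. In contrast, you propose to build $\widehat{HFB}$ by box-tensoring elementary crossing bimodules in a bordered/tangle Floer framework, then capping with cup/cap modules dictated by $\mathbf{j},\mathbf{k}$. Interestingly, the paper \emph{does} develop a bordered sutured construction close in spirit to yours in Sections~\ref{sec:bst}--\ref{sec:bstqq}, but uses it to prove \Cref{thm:main_uq2}, where the decategorification recovers the exterior-power entries $\psi_n^\wedge(\rho)^{\mathbf{j}}_{\mathbf{k}}=\det$ of submatrices of $\psi_n(\rho)$ (with no $\lambda$), not $\psi_n(\rho)-\lambda I_n$.

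The genuine gap in your proposal is exactly the step you flag, but it is worse than a bookkeeping headache. Your mechanism for introducing $-\lambda I_n$ is ``the mapping cone of $\widehat{CFB}(\rho)$ against the identity bimodule,'' but there is no natural chain map between those two bimodules with which to form such a cone; without one, the construction is not defined, let alone an invariant of $\rho$. In the paper the $-\lambda I_n$ is not the result of any cone or subtraction operation: it is already present in the Heegaard diagram as a honest extra generator on each diagonal pair $(\alpha_j,\beta_j)$, weighted by $\lambda$ because of the basepoints $\{p_L,p_R\}$ tracking the braid axis, so the determinant's $-\lambda\delta_{jk}$ contributions arise from genuine intersection points, with the $\lambda$-grading $A_2$ literally counting how many anchor points a generator contains. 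Secondarily, in your proposal the precise cup/cap pairing that would isolate $\det$ of the $(\mathbf{j},\mathbf{k})$-submatrix is left unspecified; the paper sidesteps this entirely by discarding the overbridges and underbridges outside $\mathbf{j}$ and $\mathbf{k}$ when building the diagram, so the determinant of exactly that submatrix is what the generator count computes by construction. Finally, your closing worry about a thinness statement is a red herring here: the paper uses ``Poincar\'e polynomial'' for the triply graded Euler characteristic with $(-1)^{\mathrm{Maslov}}$ signs, which is all that is needed to match $\det(A)\in\Z[t^{\pm1},\lambda]$, a polynomial that can have negative coefficients.
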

	
	Thus, to any braid $\rho \in B_n$, one can associate $\binom{2n}{n} - 1$ invariant Heegaard Floer homology groups depending on multi-indices ${\bf j}$ and ${\bf k}$. Some choices of ${\bf j}$ and ${\bf k}$ merit extra consideration.
	\begin{enumerate}[label=(\roman*)]
		\item Let $K \subset S^3$ be the link which is the braid closure of $\rho$, and let $\bar{\rho}$ be the braid resulting from reflecting a braid diagram of $\rho$ horizontally. Let $m(K) \subset S^3$ be the mirror of $K$, thought of as the braid closure of $\bar{\rho}$. When $A$ is the entire matrix $\psi_n(\rho) - \lambda I_n$, the resulting homology groups
		$$
		\widehat{HFB}(\rho, \{1, \dots, n\}, \{1, \dots, n\})
		$$
		are essentially the link Floer groups of $m(K) \cup U \subset S^3$, where $U$ is the axis around which $m(K)$ is braided.
		\item When $A$ is a codimension one submatrix of $\psi_n(\rho) - \lambda I_n$, or equivalently when
		$$
		|{\bf j}| = |{\bf k}| = n - 1,
		$$
		the corresponding groups $\widehat{HFB}(\rho, {\bf j}, {\bf k})$ admit a spectral sequence to the knot Floer homology of $m(K)$.
		\item When $A$ is a single element $a_{jk}$ of $\psi_n(\rho) - \lambda I_n$, i.e.~when $|{\bf j}| = |{\bf k}| = 1$, the groups $\widehat{HFB}(\rho, \{j\}, \{k\})$ reflect a simple count of intersections between two closed curves in a multiply-punctured torus.
	\end{enumerate}
	Additionally, it is not difficult to show that the groups $\widehat{HFB}$ detect the trivial braid, for example by applying a result of Baldwin and Grigsby \cite{bagr15}. We also explain how analogous groups can be defined for the Gassner representation---see Section \ref{sec:propss} for details, and Section \ref{sec:rel_work} for a discussion of related work.
	
	\subsection{Knot Floer homology and quantum $\mathfrak{gl}(1\vert 1)$}
	
	We have not yet been able to apply the groups $\widehat{HFB}$ to Question \ref{q:burau}. Instead, in this paper, we put our framework toward a different goal: building bridges between Heegaard Floer homology and the quantum $\mathfrak{gl}(1 \vert 1)$ Reshitikhin-Turaev invariant.
	
	Knot homology theories are most often viewed from one of three perspectives: those of Lagrangian Floer theory, gauge theory, and the representation theory of quantum groups. Unifying these points of view is a central project in low-dimensional topology. Knot Floer theory---and Heegaard Floer theory more generally---fall into the first category of homology theories, and are also understood gauge theoretically through equivalences with Seiberg-Witten theories (e.g.~\cite{kuleta20}). Representation theoretically, knot Floer homology is related to the quantum group $U_q(\mathfrak{gl}(1 \vert 1))$, known as quantum $\mathfrak{gl}(1 \vert 1)$, but this connection is more mysterious.
	
	A few key results relate knot Floer theory to quantum $\mathfrak{gl}(1 \vert 1)$. Ellis, Petkova and V\'ertesi identified the Grothendieck group of the tangle Floer algebra---a generalization of knot Floer homology for tangles---with a tensor product of certain $U_q(\mathfrak{gl}(1\vert 1))$ representations \cite{epv19}. Similar work was done by Manion \cite{man19} for a different construction of tangle Floer homology due to Ozsv\'ath and Szab\'o \cite{os18}, and this was developed further in \cite{lama21}. In another direction, Robert and Wagner define a quantum categorification of the Alexander polynomial using $U_q(\mathfrak{gl}(1 \vert 1))$ \cite{rowa22}. It is not known whether Robert and Wagner's theory is equivalent to knot Floer homology.
	
	The $U_q(\mathfrak{gl}(1\vert 1))$ braid (vector) representation is determined by the Burau representation: up to a normalization, the former results from taking exterior powers of the latter \cite{kasa91}. Thus, the theory behind our Theorem \ref{thm:main_burau} provides a natural way of relating knot Floer homology and quantum $\mathfrak{gl}(1\vert 1)$. We first obtain:
	
	\begin{thm}
		\label{thm:main_uq}
		Let $K \subset S^3$ be a link, $\rho \in B_n$ a braid with closure $K$, and $\widehat{CFK}(m(K))$ a certain chain complex for the knot Floer homology of the mirror $m(K)$. Separately, let $\widehat{Q}(K)$ denote the $U_q(\mathfrak{gl}(1\vert 1))$ invariant associated to $K$. Then there is a weight-preserving bijection between generators of $\widehat{CFK}(m(K))$ and terms in a state sum expansion of $\widehat{Q}(K)$. Both the complex $\widehat{CFK}(m(K))$ and the expansion of $\widehat{Q}(K)$ are determined by the braid $\rho$.
	\end{thm}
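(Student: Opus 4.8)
The plan is to deduce the statement from the machinery behind \refthm{main_burau} together with the classical identification of the $U_q(\mathfrak{gl}(1\vert 1))$ braid representation with exterior powers of the Burau representation (Kauffman--Saleur \cite{kasa91}), realizing both objects as two expansions of one and the same Laurent polynomial. First I would set up the state sum for $\widehat{Q}(K)$. Write $\widehat{Q}(K)$ as the renormalized quantum (Markov) trace of $\psi_n(\rho)$ acting on $V^{\otimes n}$, where $V$ is the two-dimensional representation with one even and one odd weight vector, and decompose $V^{\otimes n}$ by the number $p$ of odd (``occupied'') tensor factors. By \cite{kasa91} the action of $\rho$ on the weight-$p$ summand is, up to an explicit scalar, the $p$-th exterior power of a Burau-type matrix; the naive super-trace of a closed-up braid vanishes (the familiar degeneracy of $\mathfrak{gl}(1\vert 1)$ invariants), and the renormalization that produces $\widehat{Q}(K)$ amounts to cutting one strand, so that the super-trace collapses to a codimension-one minor, namely $\det(A)$ with $A$ a submatrix of $\psi_n(\rho)-\lambda I_n$ of the kind in case (ii) of the discussion following \refthm{main_burau}. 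Expanding this determinant by the Leibniz rule and then expanding each Burau entry according to the braid word of $\rho$ presents $\widehat{Q}(K)$ as a sum of monomials indexed by a permutation matching rows to columns together with a local choice at each crossing; this index set is the promised state-sum expansion, and each monomial carries a bidegree recording its $t$- and $\lambda$-exponent, equivalently its $\mathfrak{gl}(1\vert 1)$-weight and homological degree.

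On the Floer side I would apply \refthm{main_burau} with multi-indices $\mathbf{j},\mathbf{k}$ of size $n-1$ (again case (ii)), so that $\widehat{HFB}(\rho,\mathbf{j},\mathbf{k})$ has Poincar\'e polynomial $\det(A)$ and, crucially, is computed from an explicit bordered sutured Heegaard diagram built from the braid word --- the diagram assembled crossing-by-crossing from the tangle invariants. I take $\widehat{CFK}(m(K))$ to be this complex equipped with the differential of case (ii), whose spectral sequence converges to $\widehat{HFK}(m(K))$; this is precisely the ``geometric realization of Fox calculus'' of \cite{ras03}. The generators of this diagram are, tautologically from the construction, indexed by exactly the data of the previous paragraph: an $\alpha$--$\beta$ intersection tuple records a matching of the $n-1$ active strand positions together with, at each crossing, which local strand is occupied. (A slightly sharper and more geometric route would prove the crossing-local statement first --- that the tangle Floer bimodule of an elementary braid generator realizes the $U_q(\mathfrak{gl}(1\vert 1))$ $R$-matrix in the relevant sector --- and then compose and close up; the bijection of the theorem is then visibly local.)

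With both sides written over the same index set the bijection is immediate, so the work is to show it is weight-preserving, i.e.\ that the $(t,\lambda)$-bidegree of a monomial equals the (Alexander, Maslov) bigrading of the corresponding generator. The $t$-degree is matched by bookkeeping the exponents contributed by each $\sigma_i^{\pm 1}$ in the Burau matrix against the Alexander decorations on the Heegaard diagram, and the $\lambda$-versus-Maslov matching by comparing the Leibniz sign of the permutation and the super-signs of the $R$-matrix against the Maslov grading mod $2$. Finally I must absorb the normalization of $\widehat{Q}(K)$ --- the writhe/framing correction and the quantum-dimension factors --- by checking that it is a uniform shift of both gradings and therefore does not disturb the matching. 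That both the complex and the expansion depend only on $\rho$ is then clear, since each is built directly from the braid word.

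I expect the main obstacle to be this last, grading-level step, and within it the Maslov computation in particular: arranging that the Maslov grading agrees on the nose with the combination of the Leibniz sign, the $R$-matrix super-signs, and the framing normalization of $\widehat{Q}$, rather than merely up to an overall monomial. A secondary subtlety is to pin down precisely which model ``$\widehat{CFK}(m(K))$'' to use --- making the passage from the $\lambda$-graded complex of \refthm{main_burau} to an honest knot Floer complex for $m(K)$ explicit enough that its generator set is literally the index set of the quantum state sum, which is what makes the bijection canonical rather than merely a count.
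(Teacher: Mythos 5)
Your proposal correctly identifies the two main ingredients — the noodle/chopstick Heegaard diagram underlying \refthm{main_burau} and Kauffman--Saleur's identification of the $U_q(\mathfrak{gl}(1\vert 1))$ braid representation with exterior Burau — but it skips over the step where the paper's real content lives. You write that the renormalization ``amounts to cutting one strand, so that the super-trace collapses to a codimension-one minor, namely $\det(A)$,'' and then take the Leibniz expansion of $\det(A)$ as ``the state-sum expansion of $\widehat{Q}(K)$.'' The quantum-native state sum, which is what the theorem is about, is the partial super-trace $\sum_{{\bf j}\subset\{2,\dots,n\}}(-1)^{|{\bf j}|}\psi^\wedge_n(\rho)^{\bf j}_{\bf j}$ of \Cref{cor:str_three}, further unpacked via \Cref{lem:ugly} into noodle/chopstick intersection tuples in \Cref{prop:expanded}. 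This is \emph{not} the same list of terms as the Leibniz expansion of $\det(A)$: the former has terms indexed by subsets ${\bf j}$ of varying size with no diagonal/identity contributions, while the latter has terms all of length $n-1$ with explicit $-\lambda\delta_{jk}$ factors. The two sums agree as polynomials by ``$\det=\mathrm{str}$,'' but the theorem asks for a term-by-term bijection, and producing that bijection is precisely what \Cref{thm:clear} does geometrically: it sends a generator $x = \{x_1,\dots,x_{n-1}\}$ of $\widehat{CF}(\mathcal{H}(m(K)))$ to the sub-tuple obtained by \emph{forgetting its anchor points} (\Cref{def:anchor}), which are exactly the intersections realizing the $-\lambda\delta_{jk}$ entries. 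Your proposal contains no analogue of this step; asserting the bijection is ``immediate'' because ``both sides are written over the same index set'' presumes something that has to be proved.

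A secondary issue is the structural claim that $\mathcal{H}(\rho,{\bf j},{\bf k})$ is ``assembled crossing-by-crossing from the tangle invariants'' so that each generator records ``a local choice at each crossing.'' The diagram in \Cref{sec:diags} is not built that way: the $\alpha$ curves are fixed underbridges-plus-handles, the $\beta$ curves are the global images $\rho(\bar{o}_j)$ of overbridges, and generators correspond via \Cref{lem:relating} to noodle/chopstick intersection tuples, not to per-crossing state assignments. Your alternative ``sharper'' route — proving a crossing-local $R$-matrix statement and then stacking and closing up — is a genuinely different strategy, closer in spirit to the paper's \Cref{thm:last_one} for the tangle invariant $\widehat{BSA}$ than to \Cref{thm:clear}, and it could plausibly be made to work; but it would require building a stacked Heegaard diagram with controlled stabilizations per crossing and showing its generator set agrees with the bridge diagram's, which is not automatic and is not what \refthm{main_burau} hands you. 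Your instinct that the Maslov/sign matching is delicate is right, but the paper disposes of it cleanly via \Cref{lem:maslov} (Maslov parity equals intersection sign) together with the anchor-point count, producing the explicit global constant $(-1)^{1-n}$; the hard part is the anchor-point bijection, not the sign bookkeeping.
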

	
	See Theorem \ref{thm:clear} below for a more detailed statement, and we note that a similar mirroring happens in Manion's work \cite[Theorem 1.4.2]{man19}. Theorem \ref{thm:main_uq} implies a well-known fact: that $\widehat{Q}(K)$ and the Poincar\'e polynomial of $\widehat{HFK}(m(K))$ are both equal to the Alexander polynomial of $K$. Our statement is more precise, however, and the argument is simple and geometric, providing an intuitive justification for the relationship between knot Floer homology and quantum $\mathfrak{gl}(1 \vert 1)$. We believe this justification to be the first of its kind in print.
	
	Our final result extends Theorem \ref{thm:main_uq} further, in the direction of Ellis-Petkova-V\'ertesi's and Manion's work. In Section \ref{sec:bst}, for any tangle $T$ in the unit cube, we define a bordered sutured three-manifold $Y(T)$. The manifold $Y(T)$ has an associated Heegaard Floer theoretic object $\widehat{BFA}(Y(T))$, defined using Zarev's bordered sutured Heegaard Floer theory \cite{za11}, and the map
	$$
	T \longmapsto \widehat{BFA}(Y(T))
	$$
	gives a (semi) new approach to Heegaard Floer homology for tangles. Our definition of $Y(T)$ is motivated by the proof of Theorem \ref{thm:main_uq}, and the resulting tangle Floer groups $ \widehat{BFA}(Y(T))$ are something of a hybrid of the two tangle Floer theories defined by Zibrowius \cite{zib19} and Ozsv\'ath-Szab\'o \cite{os18}. See Figure \ref{fig:bsta} for an example of a bordered sutured manifold involved.
	
	For any tangle $T$, the object $\widehat{BFA}(Y(T))$ has the structure of a $\Z/2$-vector space, which splits as a direct sum over pairs of subsets of $\{1, \dots, n\}$:
	$$
	\widehat{BFA}(Y(T)) = \bigoplus_{{\bf j, k} \subset \{1, \dots, n\}} \widehat{BFA}(Y(T)) \cdot \iota_{\bf j, k}.
	$$
	(For experts, the terms $\iota_{\bf j, k}$ are the indecomposable idempotents of the relevant arc algebra.) To each summand $\widehat{BFA}(Y(T)) \cdot \iota_{\bf j, k}$, we associate a decategorified invariant $\Delta_{Y, {\bf j,k}}(t) \in \Z[t^{\pm 1}]$. We then prove that $\widehat{BFA}(Y(T))$ recovers the $U_q(\mathfrak{gl}(1 \vert 1))$ braid representation in the following sense:
	
	\begin{thm}
		\label{thm:main_uq2}
		Let $\rho \in B_n$ be a geometric braid, thought of as a tangle $T \subset I^3$, and let $Y(T)$ be the corresponding bordered sutured manifold. Let
		$$
		Q : B_n \to Aut((\C(t)^2)^{\otimes n})
		$$
		be the $U_q(\mathfrak{gl}(1 \vert 1))$ braid representation. Then, for any choice of multi-indices
		$$
		{\bf j,k} \subset \{1, \dots, n\},
		$$
		the polynomial $\Delta_{Y, {\bf j,k}}(t)$ of $\widehat{BFA}(Y(T)) \cdot \iota_{\bf j, k}$ is equal to a corresponding coefficient of $Q(\rho)$, up to a global normalization.
	\end{thm}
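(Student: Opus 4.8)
The plan is to reduce Theorem~\ref{thm:main_uq2} to Theorem~\ref{thm:main_burau} by identifying the decategorified invariant $\Delta_{Y,{\bf j,k}}(t)$ with the determinant of an appropriate submatrix of $\psi_n(\rho) - \lambda I_n$, specialized at $\lambda = 1$, and then matching that determinant with the corresponding matrix coefficient of $Q(\rho^{-1})$. First I would unwind the definition of $Y(T)$ from \Cref{sec:bst}: since $Y(T)$ is built so that its bordered sutured Heegaard Floer invariant $\widehat{BFA}(Y(T))$ packages, across the idempotent decomposition, exactly the data of the groups $\widehat{HFB}(\rho,{\bf j},{\bf k})$ from \Cref{thm:main_burau}, I would show that taking the graded Euler characteristic of the summand $\widehat{BFA}(Y(T))\cdot\iota_{\bf j,k}$ yields $\det(A)$ where $A$ is the $({\bf j},{\bf k})$-submatrix of $\psi_n(\rho)-\lambda I_n$ — up to the normalization inherited from the mirroring $\rho \mapsto \bar\rho$ and the passage from the braid to its closure. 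Concretely, $\Delta_{Y,{\bf j,k}}(t)$ should be $\det(A)|_{\lambda=1}$ times a monomial correction; establishing this is mostly bookkeeping with the gradings, using that the Poincaré polynomial of $\widehat{HFB}(\rho,{\bf j},{\bf k})$ is $\det(A)$ by \Cref{thm:main_burau} and that Euler characteristic specializes $\lambda$.

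Next I would compute the $U_q(\mathfrak{gl}(1\vert 1))$ side. Using the cited fact \cite{kasa91} that $Q$ is, up to normalization, the exterior power functor applied to the (reduced) Burau representation, the action of $Q(\rho)$ on $(\C(t)^2)^{\otimes n} \cong \bigoplus_{\bf k} \Lambda^{\bf k}$ is block-diagonal with blocks given by exterior powers $\Lambda^m(\psi_n^{\mathrm{red}}(\rho))$ for $|{\bf k}| = m$; the matrix coefficient of $Q(\rho)$ indexed by basis vectors $e_{\bf j}, e_{\bf k}$ is then, by the Cauchy–Binet formula, precisely $\det$ of the $({\bf j},{\bf k})$-minor of the Burau matrix. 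Passing from $\rho$ to $\rho^{-1}$ and from the reduced to the full Burau representation, I would match the parametrization of multi-indices on both sides, taking care that the "missing strand" of the reduced representation corresponds correctly to the extra puncture/sutured data on the bordered side. Then $\Delta_{Y,{\bf j,k}}(t)$ and the $({\bf j},{\bf k})$-coefficient of $Q(\rho^{-1})$ are both minors of (essentially) the same matrix, and agree up to the promised global normalization.

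The main obstacle I expect is reconciling the three different normalizations and mirror conventions simultaneously: (a) the $\lambda \mapsto 1$ specialization and monomial grading shifts built into $\widehat{BFA}(Y(T))$, (b) the overall monomial factor by which $Q$ differs from $\bigoplus_m \Lambda^m$Burau (which is what makes $\widehat{Q}$ a link invariant rather than merely a braid-conjugacy invariant), and (c) the orientation-reversal/mirroring $\rho \mapsto \rho^{-1}$ versus $\rho \mapsto \bar\rho$ relating the tangle $Y(T)$ to $m(K)$ — this is the same mirroring already visible in \Cref{thm:main_uq} and in Manion's work \cite{man19}. I would handle this by fixing one reference multi-index pair (e.g.\ ${\bf j} = {\bf k} = \emptyset$ or ${\bf j}={\bf k}=\{1,\dots,n\}$) where both quantities can be computed by hand, pinning down the global normalization there, and then arguing that it is uniform across all $({\bf j},{\bf k})$ because both sides are matrix coefficients of a single linear map. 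A secondary, more routine point is checking that the Cauchy–Binet identification of exterior-power matrix coefficients with minors is compatible with the $t$-weight grading used to define $\widehat{Q}(K)$; this follows from the diagonal $t$-action on $\Lambda^{\bf k}$ but should be stated explicitly.
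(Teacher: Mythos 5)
There is a genuine gap in your first step. You propose to identify $\Delta_{Y,{\bf j,k}}(t)$ with $\det(A)|_{\lambda=1}$ where $A$ is a $({\bf j},{\bf k})$-submatrix of $\psi_n(\rho)-\lambda I_n$, on the grounds that $\widehat{BFA}(Y(T))$ ``packages the data of'' the groups $\widehat{HFB}(\rho,{\bf j},{\bf k})$. This is not correct. In $\widehat{HFB}(\rho,{\bf j},{\bf k})$ a generator may or may not contain the $j$th anchor point, and these two possibilities are distinguished by the $A_2$-grading (hence by $\lambda$ in the Poincar\'e polynomial); that is where the $-\lambda I_n$ comes from. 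In the bordered sutured theory the situation is structurally different: the idempotent $\iota_{{\bf j},{\bf k}}$ \emph{forces} which $\alpha^a$ arcs are occupied. The lower arcs $\alpha^a_j$ ($j\le n$) each meet $\beta_j$ in only one point (the anchor-like intersection), so within a given idempotent summand the anchor points are not optional--they are either all present or all absent, depending on the idempotent. Consequently the decategorification $\Delta_{T,{\bf j}^*,{\bf k}}(t)$ is, up to a unit, a single minor $\det(\psi_n(\rho)_{{\bf j},{\bf k}})$ of the Burau matrix itself, not a $\lambda$-specialization of $\det((\psi_n(\rho)-\lambda I_n)_{{\bf j},{\bf k}})$. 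These differ genuinely: already for ${\bf j}={\bf k}=[n]$, one is $\det(\psi_n(\rho))=\pm t^{\pm\mathrm{exp}(\rho)}$ and the other is $\det(\psi_n(\rho)-I_n)$, essentially the Alexander polynomial of the closure.

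The route through exterior powers and minors on the quantum side (your Cauchy--Binet step) is in the right spirit and corresponds to what the paper does in \Cref{lem:ugly}, though note the paper uses the \emph{full} (unreduced) Burau representation throughout, and the correct matching of idempotents involves the complement multi-index ${\bf j}^*=[n]\setminus{\bf j}$ on the bordered side. The paper's actual proof of \Cref{thm:last_one} does not pass through $\widehat{HFB}$ or a $\lambda$-specialization at all; it establishes a direct bijection (via \Cref{lem:int_bij} and the analysis of forced anchor points) between generators in $\mathcal{G}(\mathcal{H}(T))\cdot\iota_{{\bf j}^*,{\bf k}}$ and the sets of noodle/chopstick intersections appearing in the expansion of $\psi^\wedge_n(\rho)^{\bf j}_{\bf k}$, then checks gradings using Lemmas \ref{lem:bs_maslov} and \ref{lem:gr_again}. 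To repair your argument you would need to abandon the $\widehat{HFB}$-with-$\lambda=1$ reduction and instead directly count the generators of the bordered diagram within each idempotent summand, as the paper does.
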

	See Theorem \ref{thm:last_one} for the precise result, which involves some mirroring. 
	
	Theorem \ref{thm:main_uq2} can be compared directly with the main theorems of \cite{epv19, man19} discussed above, since each result compares a different tangle Floer theory to the $U_q(\mathfrak{gl}(1 \vert 1))$ Reshetikhin-Turaev invariant. Our result is weaker than the others for two reasons: first, we limit ourselves to the special case where the tangle is a braid; second, we avoid the formalism of Grothendieck groups. We chose to limit Theorem \ref{thm:main_uq2} in these ways to keep the scope of the paper manageable. We expect our work can be extended to a full analogue of the results in \cite{epv19, man19}, and we plan to accomplish this in a follow-up paper.  (Such an extension will partly consist of studying $\widehat{BFA}(Y(T))$ when the tangle $T$ is a ``cup'' or a ``cap;'' cf.~\cite[Section 4.3]{man19}.)
	
	In spite of its restricted purview, we believe Theorem \ref{thm:main_uq2} is significant in two respects. First, as with Theorem \ref{thm:main_uq}, our argument is straightforward, and demonstrates an intrinsic geometric connection between quantum $\mathfrak{gl}(1 \vert 1)$ and the Heegaard Floer framework. In contrast, the arguments of \cite{epv19, man19} are indirect and more computational. The resulting correspondence between our tangle Floer invariant and $U_q(\mathfrak{gl}(1 \vert 1))$ is much simpler as well. The arc algebras involved in our tangle Floer theory are relatively lightweight, and we obtain the $U_q(\mathfrak{gl}(1 \vert 1))$ braid representation on the nose. There is no need to extend the representation as in \cite{epv19}, or to consider subalgebras of the arc algebra as in \cite{man19}. We expect our invariant $\widehat{BFA}(Y(T))$ to mirror the behavior of the $U_q(\mathfrak{gl}(1 \vert 1))$ Reshetikhin-Turaev invariant more closely than other tangle Floer theories; this is something that we also plan to address in the sequel.
	
	\subsection{Discussion and related work}
	\label{sec:rel_work}
	
	As alluded to above, Theorem \ref{thm:main_burau} results from combining two long-understood ideas: the relationship between knot Floer homology and Alexander matrices, and the fact that an Alexander matrix can be obtained from the Burau representation. The main tool in our proofs is a choice of Heegaard diagram which unites these perspectives; we then relate our diagram to the Burau representation using a variant of the fork/noodle calculus developed by Krammer and Bigelow \cite{kra00, big01}. The fork/noodle calculus also provides a way to understand the $U_q(\mathfrak{gl}(1 \vert 1))$ braid representation, by counting certain sets of intersections in a fork/noodle diagram, which look a lot like generators of the knot Floer chain complex.
	
	In spite of the familiarity of the ideas involved, a study of the link between knot Floer homology and the Burau representation seems mostly absent from existing literature. Baldwin and Grisgy consider braid invariants coming from link Floer homology, and connect their work with the Burau representation, but only indirectly, in the sense that both recover the Alexander polynomial \cite[Remark 2.3]{bagr15}. On the other hand, Khovanov and Seidel \cite{ks02} and Bouchair \cite{bou10} present Floer-theoretic categorifications (in different senses of the word) of the Burau representation. Both works use some version of the fork/noodle calculus, but neither mentions knot Floer homology. Within this context, our Theorem \ref{thm:main_burau} can be seen as generalizing and unifying Bouchair's work (with individual Burau matrix elements) and Baldwin-Grigsby's (with the entire Burau matrix). We also note that an algebraic categorification of the Burau representation at prime roots of unity was defined by Qi and Sussan \cite{qisu16}.
	
	We believe the connection between the Burau representation and knot Floer homology is well worth understanding in detail. On the representation-theoretic side, Krammer and Bigelow's fork/noodle calculus has been a powerful tool for studying braid representations, yielding the first proof that braid groups are linear \cite{big01}. On the other side, knot Floer homology has led to numerous advancements in low-dimensional topology, for example through its ability to detect knot fibering and genus \cite{ni07, gh08, os04b, mos09}.
	
	\subsection{Further directions}
	
	Our Theorem \ref{thm:main_burau} begs the question:
	
	\begin{question}
		What topological information do the groups $\widehat{HFB}$ encode?
	\end{question}
	
	More specifically, one can ask:
	
	\begin{question}
		\label{q:word}
		For a given braid $\rho \in B_n$, does the family of groups $\widehat{HFB}(\rho, {\bf j}, {\bf k})$ for different multi-indices ${\bf j, k} \subset \{1, \dots n\}$ completely determine $\rho$?
	\end{question}
	
	Since the word problem has been solved for braid groups \cite{art25, gar69}, Question \ref{q:word} is of more theoretical than practical interest. Nonetheless, it seems likely that the groups $\widehat{HFB}$ contain useful geometric data about braids and their link closures. We also remark that, for simplicity, we've limited ourselves in this paper to defining ``hat'' flavors of Heegaard Floer homology. We expect that a full braid Floer chain complex could be defined without too much trouble, which would contain additionally information.
	
	\subsection{Outline}
	
	We have attempted to make this paper self-contained, so that it can be read by both Heegaard Floer theorists and quantum topologists. Section \ref{sec:ubs} gives an overview of the full Burau representation, and defines our variation of the fork/noodle calculus. Section \ref{sec:hfs} then provides background on Heegaard Floer homology---those who are already familiar with the topic may be able to skip this section. We define our homology theories in Section \ref{sec:theory}, and discuss some of their properties in Section \ref{sec:propss}.
	
	In Section \ref{sec:qs}, we begin moving toward the proofs of Theorems \ref{thm:main_uq} and \ref{thm:main_uq2}. We describe Reshitikhin-Turaev invariants, and the $U_q(\mathfrak{gl}(1 \vert 1))$ invariant in particular. In Section \ref{sec:lasts}, we first prove Theorem \ref{thm:main_uq}, and then define our bordered sutured Floer homology groups for tangles. Finally, we prove Theorem \ref{thm:main_uq2}.
	
	\subsection{Acknowledgments}
	
	The author is grateful to Imogen Montague for forcing him to learn bordered sutured Heegaard Floer homology, and to Patricia Sorya for a close reading of a draft of this paper. The author also thanks John Baldwin for encouraging conversations, and the graduate students of Boston College for letting him crash their seminars for the past three years and sometimes eat their food.
	
	\section{Notation}
	\label{sec:not}
	
	We will mostly think of the $n$-strand braid group, $B_n$, as the mapping class group of the $n$-punctured disk, but we will sometimes consider elements of $B_n$ as isotopy classes of arcs in the unit cube. To avoid confusion, we refer to the latter objects as {\em geometric} braids. Additonally, we use $[n]$ to indicate the set $\{1, \dots, n\}$ for any $n \in \Z_{\geq 0}$.
	
	If $\alpha$ and $\beta$ are two oriented, half-dimensional submanifolds of an oriented, even-dimensional manifold $Y$, than $\alpha \cdot \beta$ indicates the signed count of intersections between $\alpha$ and $\beta$. If $x \in \alpha \cap \beta$, then $\text{sgn}(x) \in \{-1,1\}$ is the sign of $x$ as an intersection point. When $\dim(Y)$ is not divisible by four, this sign depends on the order of the pairing; if we write $x \in \alpha \cap \beta$, then we are implicitly taking the order $\alpha \cdot \beta$. Separately, if $\sigma \in S_n$ is an element of the symmetric group on $n$ elements, then $\text{sgn}(\sigma)$ is the sign of $\sigma$, which lies in $\{0,1\}$.
	
	If $Y$ is any topological space, then $\text{Sym}^k(Y)$ denotes the $k$-fold symmetric product of $Y$, i.e.~the quotient of the $k$-fold product of $Y$ by the $S_k$ action that permutes coordinates:
	$$
	\text{Sym}^k(Y) = (Y \times \cdots \times Y) / \{(y_1, \dots, y_k) \sim (y_{\sigma(1)}, \dots, y_{\sigma(k)}) \mid y_j \in Y, \sigma \in S_k\}.
	$$
	If $y = [(y_1, \dots, y_k)]$ is a point of $\text{Sym}^k(Y)$ with $k$ distinct coordinates, then we will often conflate $y$ with the $k$-element subset $\{y_1, \dots, y_k\}$ of $Y$. We will also frequently conflate curves and arcs in surfaces with homology classes or relative homology classes they represent.
	
	\section{Background on the Burau Representation}
	\label{sec:ubs}
	
	\subsection{The full Burau representation}
	\label{sec:ub}
	
	Let $D \subset \C$ be the closed unit disk, and let $z_1, \dots, z_n$ be $n$ points in $D$ with zero imaginary part and
	$$
	-1 < \mathfrak{Re}(z_1) < \cdots < \mathfrak{Re}(z_n) < 1.
	$$
	Let ${\bf D}_n$ denote the {$n$-punctured disk} $D - \{z_1, \dots, z_n\}$, and define the {\em $n$-strand braid group} $B_n$ to be the mapping class group of homeomorphisms of ${\bf D}_n$ which fix $\partial D$ pointwise. For $j = 1, \dots, n - 1$, let $\sigma_j \in B_n$ be the mapping class of the homeomorphism which exchanges the punctures $z_j$ and $z_{j + 1}$ by a clockwise half twist; then the elements $\sigma_1, \dots, \sigma_{n - 1}$ are a standard set of generators for $B_n$.
	
	Our definition of the {\em full}, or {\em unreduced}, {\em Burau representation} $\psi_n : B_n \to GL_n(\Z[t,t^{-1}])$ follows \cite{brma18}. Let $x$ be the basepoint $-i \in \partial D$, and let 
	$$
	\omega : \pi_1({\bf D}_n, x) \to \Z
	$$
	be the homomorphism given by counting the total winding number of an element of $\pi_1({\bf D}_n, x)$ around each puncture $z_j$. Let ${\bf C}_n$ be the cover of ${\bf D}_n$ corresponding to $\ker(\omega)$; then the deck transformation group of ${\bf C}_n$ is infinite cyclic, and we let $t$ denote a (multiplicative) generator of this group.
	
	Let $\{\tilde{x}_k\}_{k \in \Z}$ be the preimage of $x$ in ${\bf C}_n$, indexed so that $t \tilde{x}_k = \tilde{x}_{k + 1}$. Additionally, for $j = 1, \dots, n$, let $\eta_j \subset {\bf D}_n$ be the oriented arc which starts and ends at $x$ and loops clockwise once around $z_j$, as in Figure \ref{fig:fna}. Let $\tilde{\eta}_j$ be the lift of $\eta_j$ to ${\bf C}_n$ which begins at $\tilde{x}_0$. Then the relative homology group $H_1({\bf C}_n, \{\tilde{x}_k\})$ is a free $\Z[t,t^{-1}]$-module of dimension $n$, with preferred basis $\tilde{\eta}_1, \dots, \tilde{\eta}_n$.
	
	Let $\rho \in B_n$ be a braid represented by a homeomorphism $\rho: {\bf D}_n \to {\bf D}_n$. Then $\rho$ lifts uniquely to a homeomorphism $\tilde{\rho} : {\bf C}_n \to {\bf C}_n$ which fixes the fiber $\{\tilde{x}_k\}$ pointwise, and the map $\tilde{\rho}$ induces a $\Z[t,t^{-1}]$-module automorphism $\tilde{\rho}_*$ of $H_1({\bf C}_n, \{\tilde{x}_k\})$. The full Burau representation is the map
	\begin{align*}
	&\psi_n : B_n \to \text{Aut}(H_1({\bf C}_n, \{\tilde{x}_k\})) \cong GL_n(\Z[t,t^{-1}]), \\
	&\psi_n(\rho) = \tilde{\rho}_*.
	\end{align*}
	The representation is defined explicitly by
	\begin{equation}
		\label{eq:explicit_burau}
		\sigma_j \mapsto \begin{bmatrix}
			I_{j - 1} & 0 & 0 & 0 \\
			0 & 1-t & 1 & 0 \\
			0 & t & 0 & 0 \\
			0 & 0 & 0 & I_{n - j - 1}
		\end{bmatrix}
	\end{equation}
	for $j = 1, \dots, n - 1$.
	
	\begin{rmk}
		Some authors define the full Burau representation to be the transpose of the one we've given. Our convention will be justified by computations in the next section.
	\end{rmk}
	
	\begin{rmk}
		\label{rem:red}
		The unreduced Burau representation is so named because it fixes the diagonal subspace
		$$
		\langle (1, \dots, 1) \rangle \in \Z[t,t^{-1}]^n.
		$$
		The {\em reduced Burau representation} is defined using the action of $B_n$ on $H_1({\bf C}_n)$, which is closely related to the action of $B_n$ on $H_1({\bf C}_n, \{\tilde{x}_k\})/\langle (1, \dots, 1) \rangle$.
	\end{rmk}
	
	\subsection{The noodle/fork/chopstick calculus}
	\label{sec:fn}
	
	The Burau representation can be computed diagrammatically, using a calculus developed by Krammer and Bigelow \cite{kra00, big01}. For this, it's helpful to consider the following cut-and-paste construction of ${\bf C}_n$. For $j \in [n]$, let $\ell_j$ be a vertical line segment which begins at $z_j$ and moves upward to $\partial D$. Let
	$$
	X = {\bf D}_n \setminus (\ell_1 \cup \ell_2 \cup \dots \cup \ell_n),
	$$
	and for each $j$ let $\ell^+_j, \ell^-_j \subset \partial X$ be the boundary arcs of $X$ coming from the left and right sides of $\ell_j$ respectively. Let $\{X_k\}$ be $\Z$ copies of $X$; then ${\bf C}_n$ can be constructed by gluing the copy of $\ell_j^+ - \{z_j\}$ in $X_k$ to the copy of $\ell_j^- - \{z_j\}$ in $X_{k + 1}$ for all $k \in \Z$, $j \in \{1, \dots, n\}$. We fix indices so that $\tilde{x}_k \in X_k$ for all $k \in \Z$.

	For $j \in [n]$, let $\varepsilon_j$ be a properly embedded, oriented arc in $D$ which begins at $z_j$ and travels upward to $\partial D$, parallel to $\ell_j$ and to the left of it, so that $\varepsilon_j \cap \ell_j = \{z_j\}$---see Figure \ref{fig:fna}. Let $\tilde{\varepsilon}_j$ be the lift of $\varepsilon_j$ to ${\bf C}_n$ contained in $X_0$, so that $t^m \tilde{\varepsilon}_j$ is contained in $X_m$ for $m \in \Z$. Then the arcs $\tilde{\varepsilon}_1, \dots, \tilde{\varepsilon}_n$ give a basis for $H_1({\bf C}_n, \partial {\bf C}_n - \{\tilde{x}_k\})$ as a free $\Z[t, t^{-1}]$-module, and there is an intersection pairing
	$$
	 H_1({\bf C}_n, \{\tilde{x}_k\}) \times H_1({\bf C}_n, \partial {\bf C}_n - \{\tilde{x}_k\}) \to \Z[t, t^{-1}]
	$$
	defined as follows. For $e \in H_1({\bf C}_n, \{\tilde{x}_k\})$ and $j \in [n]$, set
	\begin{equation}
		\label{eq:pairing}
		\langle e, \tilde{\varepsilon}_j \rangle = \sum_{m \in \Z} (e \cdot (t^m \tilde{\varepsilon}_j))t^m \in \Z[t,t^{-1}].
	\end{equation}
	Here $e$ and $t^m \varepsilon_j$ are represented by oriented arcs which intersect transversely for all $m$, and $\cdot$ is their algebraic intersection number. Since $e$ is represented by a compact arc, the summation is non-zero for all but finitely many values of $m$, and is well defined. Additionally,
	$$
	\langle \tilde{\eta}_j, \tilde{\varepsilon}_k \rangle = \delta_{jk},
	$$
	where $\delta_{jk}$ is the Kronecker delta. Therefore the pairing is non-degenerate, and the basis $\{\tilde{\varepsilon}_j\}$ of $H_1({\bf C}_n, \partial {\bf C}_n - \{\tilde{x}_k\}) \cong  H^*_1({\bf C}_n, \{\tilde{x}_k\})$ is dual to the basis $\{\tilde{\eta}_j\}$. It follows that, for all $e \in H_1({\bf C}_n, \{\tilde{x}_k\})$,
	$$
	e = \sum_{j = 1}^n \langle e, \tilde{\varepsilon}_j \rangle \eta_j.
	$$
	From this we conclude that if $\rho \in B_n$, and $(\psi_n(\rho))_{jk}$ is the $jk$ entry of the full Burau representation of $\rho$, then
	\begin{equation}
		\label{eq:fn}
		(\psi_n(\rho))_{jk} = \langle \tilde{\rho}(\tilde{\eta}_j) , \tilde{\varepsilon}_k \rangle.
	\end{equation}
	
	To simplify notation, we make the following definitions.
	
	\begin{defn}
		\label{def:fn}
		Fix $n > 0$, and let ${\bf D}_n$ be the $n$-punctured disk as above. Then, for all $j \in [n]$:
		\begin{itemize}
			\item The arc $\eta_j \subset {\bf D}_n$ is called the {\em $j$th standard noodle}.
			\item The arc $\varepsilon_j \subset {\bf D}_n$ is the {\em $j$th standard chopstick}.
			\item The arc $\ell_j \subset {\bf D}_n$ is the {\em $j$th cut arc}.
		\end{itemize}
	\end{defn}
	
	Figure \ref{fig:fna} shows a standard fork, standard noodle, and cut arcs in the case of ${\bf D}_3$. 
	
	\begin{rmk}
		The term ``noodle'' was first used by Bigelow in the context of a similar intersection pairing \cite{big01}. In Bigelow's formulation, which computes the reduced Burau representation \cite{big02}, the dual objects to noodles are called ``forks'' after a paper by Krammer \cite{kra00}. Since we care about the unreduced representation, our pairing is slightly different, and ``chopstick'' feels more appropriate.
	\end{rmk}
	
	\begin{figure}
		\labellist
		\small\hair 2pt
		\pinlabel $\eta_1$ at 235 50
		\pinlabel $\varepsilon_2$ at 315 325
		\pinlabel $\ell_1$ at 245 325
		\pinlabel $\ell_2$ at 395 325
		\pinlabel $\ell_3$ at 545 325
		\pinlabel $\sigma_1(\eta_1)$ at 860 40
		\pinlabel $\sigma_1(\eta_2)$ at 1195 60
		\pinlabel $y_1$ at 940 420
		\pinlabel $y_2$ at 1280 265
		\pinlabel $y_3$ at 865 380
		\pinlabel $y_4$ at 1265 115
		\endlabellist
		
		\centering
		\subcaptionbox{The standard noodle $\eta_1$, standard chopstick $\varepsilon_2$, and three cut arcs in ${\bf D}_3$\label{fig:fna}}{
			\includegraphics[height=4.5cm]{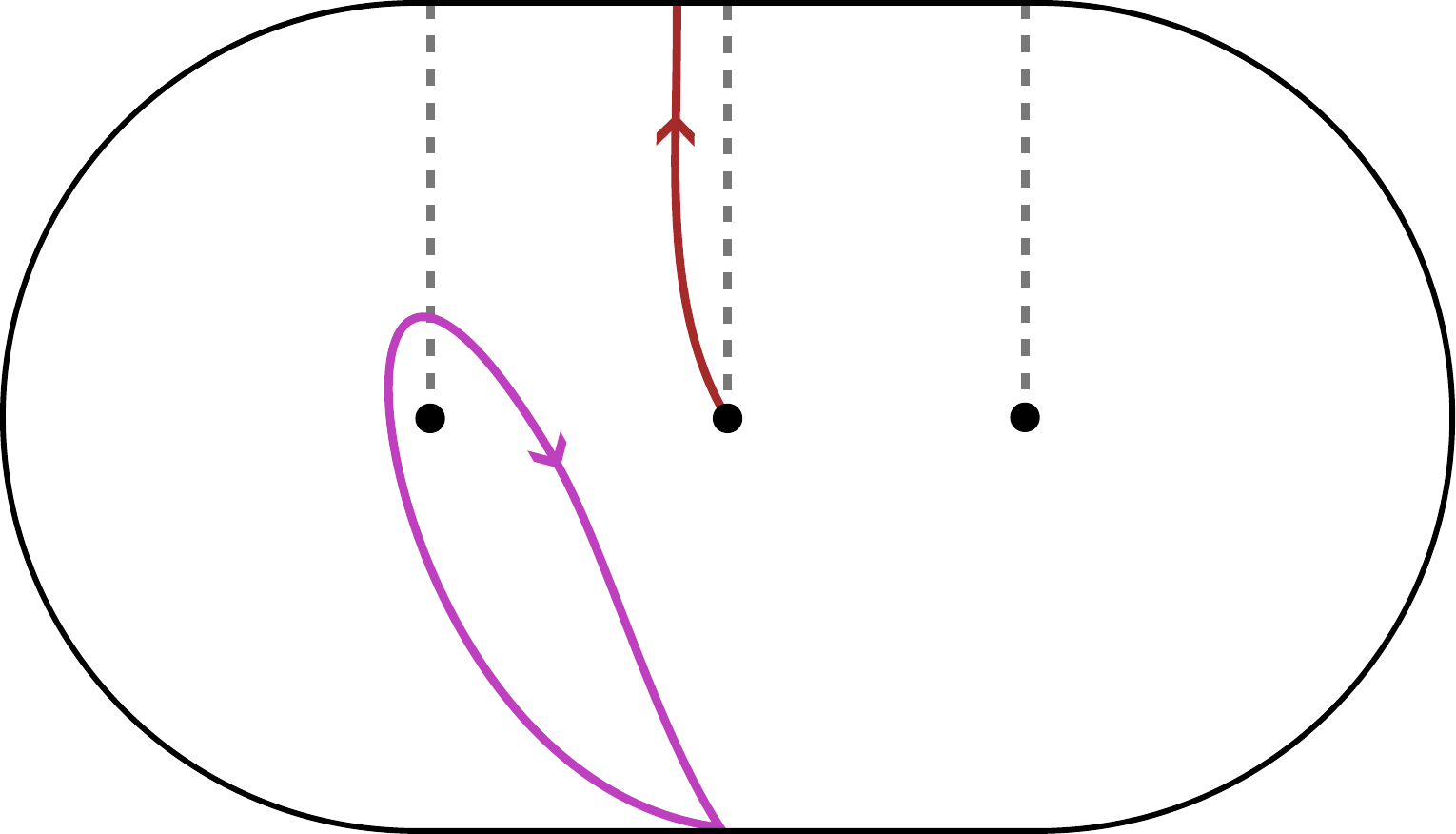}
		}
		\hspace{1cm}
		\subcaptionbox{The chopstick/noodle diagram for the generator $\sigma_1$ in $B_2$ can be used to derive the matrix (\ref{eq:explicit_burau}) \label{fig:fnb}}{
			\includegraphics[height=4.5cm]{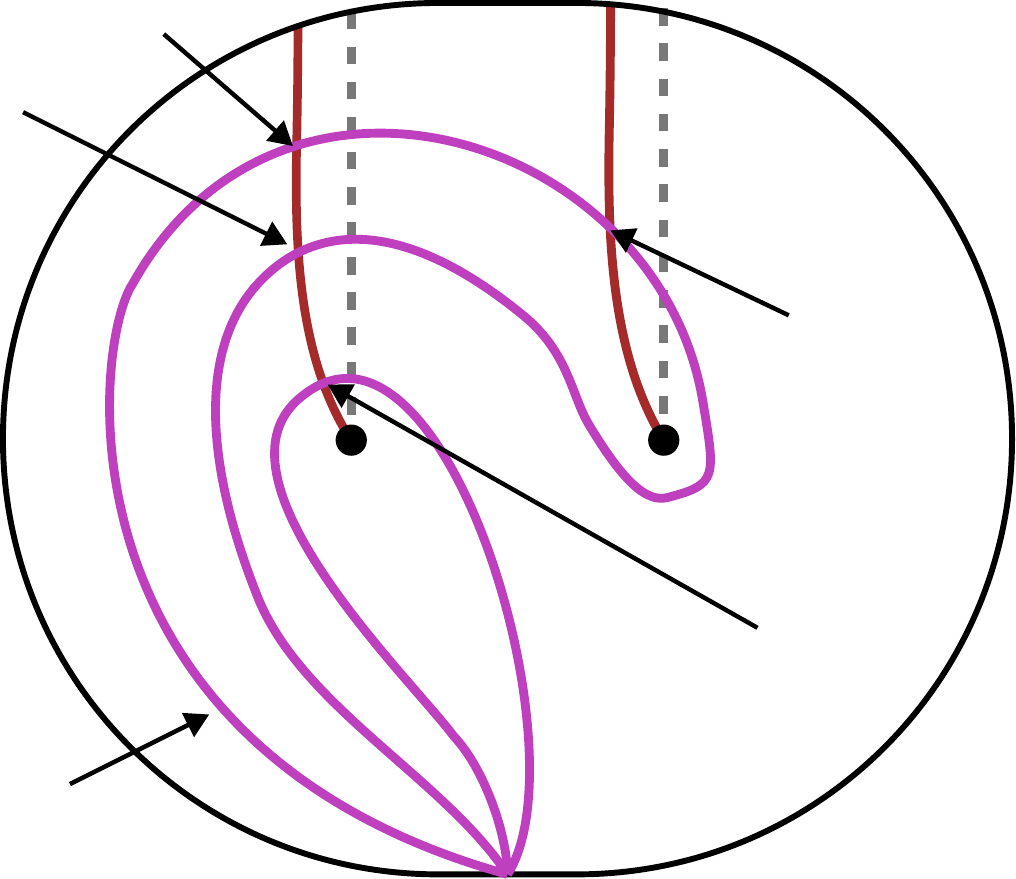}
		}
		\caption{}
		\label{fig:fn}
	\end{figure}

	\subsection{A Heegaard Floer-ish reformulation}
	\label{sec:reform}
	
	In this section, we use the noodle/chopstick calculus to define the Burau representation without referring to the cover ${\bf C}_n$. This formulation of the representation is implicit in Bigelow's work \cite{big02}, and is also used by Bouchair and Khovanov-Seidel \cite{bou10, ks02} (and cf.~\cite{klw01}). Each of these papers deals only with the {\em reduced} Burau representation, however, so we briefly develop our version of the theory. Throughout, we use $\eta$ and $\varepsilon$ to represent standard noodles and chopsticks as in Definition \ref{def:fn}.
	
	First, given a braid $\rho \in B_n$ we expand (\ref{eq:fn}) using (\ref{eq:pairing}):
	\begin{equation}
		\label{eq:first_expansion}
		(\psi_n(\rho))_{jk} = \sum_{m \in \Z} (\tilde{\rho}(\tilde{\eta}_j) \cdot (t^m \tilde{\varepsilon}_k))t^m = \sum_{m \in \Z} \Big( \sum_{\tilde{y} \in  \tilde{\rho}(\tilde{\eta}_j) \cap (t^m \tilde{\varepsilon}_k)} \text{sgn}(\tilde{y}) t^m \Big).
	\end{equation}
	
	Next, there is a natural bijection between the intersection sets
	$$
	\rho(\eta_j) \cap \varepsilon_k \leftrightarrow \tilde{\rho}(\tilde{\eta}_j) \cap \big( \bigcup_{m \in \Z} t^m \tilde{\varepsilon}_k \big),
	$$
	since each point $y \in \rho(\eta_j) \cap \varepsilon_k$ lifts uniquely to a point $\tilde{y} \in \tilde{\rho}(\tilde{\eta}_j) \cap (t^m \tilde{\varepsilon}_k)$ for some $m \in \Z$. We call the relevant $m$ the {\em sheet} of $\tilde{y}$:
	
	\begin{defn}
		\label{def:sheet}
		With notation as above, let $y$ be a point in $\rho(\eta_j) \cap \varepsilon_k$, and let $\tilde{y} \in {\bf C}_n$ be the unique lift of $y$ contained in $\tilde{\rho}(\tilde{\eta}_j)$. Then the {\em sheet} of $\tilde{y}$ is the unique $m$ such that $\tilde{y} \in \tilde{\rho}(\tilde{\eta}_j) \cap t^m\tilde{\varepsilon}_k$, and we define the {\em sheet function}
		$$
		S : \rho(\eta_j) \cap \varepsilon_k \to \Z
		$$
		by sending $y \in \rho(\eta_j) \cap \varepsilon_k$ to the sheet of $\tilde{y}$.
	\end{defn}
	
	Now, (\ref{eq:first_expansion}) can be rewritten.
	\begin{lemma}
		\label{lem:reform}
		The Burau representation of a braid $\rho \in B_n$ is determined by
		$$
			(\psi_n(\rho))_{jk} = \sum_{y \in \rho(\eta_j) \cap \varepsilon_k} \text{sgn}(y) t^{S(y)},
		$$
		where $S$ is the sheet function of Definition \ref{def:sheet}.
	\end{lemma}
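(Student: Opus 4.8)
The plan is to show that the two displayed expansions for $(\psi_n(\rho))_{jk}$ — the one in \eqref{eq:first_expansion} coming from the cover ${\bf C}_n$, and the one in the statement living entirely downstairs in ${\bf D}_n$ — agree term by term, via the bijection $\rho(\eta_j) \cap \varepsilon_k \leftrightarrow \tilde{\rho}(\tilde{\eta}_j) \cap \big(\bigcup_{m} t^m \tilde{\varepsilon}_k\big)$ already described in the text. The content of the lemma is really just bookkeeping: I must check that this bijection (i) is well-defined and exhaustive, (ii) sends a downstairs point $y$ to an upstairs point $\tilde y$ whose sheet is exactly $S(y)$, and (iii) is sign-preserving, so that each summand $\mathrm{sgn}(y) t^{S(y)}$ matches a summand $\mathrm{sgn}(\tilde y) t^m$ of \eqref{eq:first_expansion}.

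First I would nail down the bijection. The arc $\tilde\rho(\tilde\eta_j)$ is a single lift of $\rho(\eta_j)$, namely the one starting at $\tilde x_0$, because $\tilde\rho$ fixes the fiber $\{\tilde x_k\}$ and $\tilde\eta_j$ starts at $\tilde x_0$. The covering projection $p : {\bf C}_n \to {\bf D}_n$ restricts to a homeomorphism from $\tilde\rho(\tilde\eta_j)$ onto $\rho(\eta_j)$ (it is a lift of an embedded arc), so every point $y \in \rho(\eta_j) \cap \varepsilon_k$ has a unique preimage $\tilde y \in \tilde\rho(\tilde\eta_j)$; since $p^{-1}(\varepsilon_k) = \bigsqcup_{m \in \Z} t^m \tilde\varepsilon_k$ and $\tilde y$ lies over $\varepsilon_k$, we get $\tilde y \in \tilde\rho(\tilde\eta_j) \cap t^m\tilde\varepsilon_k$ for a unique $m$. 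That $m$ is precisely the sheet $S(y)$ by \Cref{def:sheet}, essentially by definition. Conversely any $\tilde y$ in the right-hand intersection set projects to a point of $\rho(\eta_j) \cap \varepsilon_k$ and lies on $\tilde\rho(\tilde\eta_j)$, so the map is onto; this gives (i) and (ii).

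For (iii), the sign of an intersection point of oriented curves in an oriented surface is a local notion, computed in a small neighborhood; since $p$ is an orientation-preserving local homeomorphism (${\bf C}_n$ inherits its orientation from ${\bf D}_n$) and $\tilde\rho(\tilde\eta_j)$, $t^m\tilde\varepsilon_k$ project to $\rho(\eta_j)$, $\varepsilon_k$ with matching orientations, we have $\mathrm{sgn}(\tilde y) = \mathrm{sgn}(y)$. Plugging this into \eqref{eq:first_expansion}: the double sum there is $\sum_{m} \sum_{\tilde y \in \tilde\rho(\tilde\eta_j)\cap t^m\tilde\varepsilon_k} \mathrm{sgn}(\tilde y) t^m$, and regrouping over the union $\bigcup_m t^m\tilde\varepsilon_k$ and transporting along the bijection turns this into $\sum_{y \in \rho(\eta_j)\cap\varepsilon_k} \mathrm{sgn}(y) t^{S(y)}$, which is the claimed formula. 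A small point to mention is that $\rho$ and the chopsticks can be isotoped so that $\rho(\eta_j)$ meets $\varepsilon_k$ transversely in finitely many points (equivalently $\tilde\rho(\tilde\eta_j)$ meets each $t^m\tilde\varepsilon_k$ transversely), which is already built into the setup around \eqref{eq:pairing}, and that the sum is finite because $\rho(\eta_j)$ is compact.

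I don't expect a genuine obstacle here — the statement is a reformulation, and the only place to be careful is the orientation/sign matching in step (iii) and making sure the correct lift $\tilde\rho(\tilde\eta_j)$ (not some translate $t^r\tilde\rho(\tilde\eta_j)$) is the one picked out, since using the wrong lift would shift every sheet by a constant and multiply the answer by a power of $t$. This is exactly controlled by the normalization that $\tilde\rho$ fixes the fiber $\{\tilde x_k\}$ and $\tilde\eta_j$ begins at $\tilde x_0$, which I would state explicitly.
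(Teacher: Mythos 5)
Your argument is correct and is essentially the argument the paper leaves implicit: Lemma~\ref{lem:reform} is stated with no proof environment precisely because it is meant to follow by regrouping equation~(\ref{eq:first_expansion}) along the bijection $\rho(\eta_j)\cap\varepsilon_k \leftrightarrow \tilde\rho(\tilde\eta_j)\cap\bigcup_m t^m\tilde\varepsilon_k$ and reading off the sheet of each lift, which is what you do. You go one step further than the paper by explicitly verifying sign-preservation under the covering projection; this is a worthwhile point to spell out, and your justification (the sign of a transverse intersection is local, and $p$ is an orientation-preserving local homeomorphism sending the oriented lifts to the oriented downstairs curves) is the right one. One tiny imprecision: $\rho(\eta_j)$ is a loop based at $x$, not an embedded arc, so $p$ restricted to $\tilde\rho(\tilde\eta_j)$ is a bijection on the interior rather than a homeomorphism onto its image; this is harmless here since $\varepsilon_k$ avoids the basepoint $x$, so every intersection lies in the interior and lifts uniquely.
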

	
	Figure \ref{fig:fnb} demonstrates how Lemma \ref{lem:reform} can be used to compute the Burau matrix (\ref{eq:explicit_burau}) of a generator of $B_2$. Referring to the figure, the relevant sheet values are $S(y_1) = S(y_4) = 0$ and $S(y_2) = S(y_3) = 1$, and all intersection points are positive except $y_3$.
		
	Our final step is redefining $S(y)$ as an intersection count. Given $\rho \in B_n$ and $y \in \rho(\eta_j) \cap \varepsilon_k$ for some $j, k \in [n]$, let $a$ be the oriented sub-arc of $\rho(\eta_j)$ which begins at the initial point of $\rho(\eta_j)$ and terminates at $y$. Similarly, let $b$ be the oriented sub-arc of $\varepsilon_k$ which begins at $y$ and ends at $\partial D$. Then $a \cup b$ is a properly embedded arc in ${\bf D}_n$, which we denote by $\gamma_y$.
	
	\begin{lemma}
		\label{lem:s_intersect}
		Let $\rho \in B_n$ be a braid, and $y \in \rho(\eta_j) \cap \varepsilon_k$ for some $j, k \in [n]$. Let the arc $\gamma_y \subset {\bf D}_n$ be defined as above. Then
		$$
		S(y) = \sum_{r = 1}^n \gamma_y \cdot \ell_r,
		$$
		where $S(y)$ is the sheet function of Definition \ref{def:sheet} and $\ell_r$ is the $r$th cut arc of Definition \ref{def:fn}.
	\end{lemma}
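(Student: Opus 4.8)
The plan is to reinterpret the sheet function $S(y)$ through the cut-and-paste model of $\mathbf{C}_n$ described in \Cref{sec:fn}, where $\mathbf{C}_n$ is built from $\mathbb{Z}$ copies $\{X_k\}$ of $X = \mathbf{D}_n \setminus (\ell_1 \cup \cdots \cup \ell_n)$, glued so that $\ell_j^+$ in $X_k$ meets $\ell_j^-$ in $X_{k+1}$. In this model, the deck generator $t$ sends $X_k$ to $X_{k+1}$, and by construction $\tilde\varepsilon_k \subset X_0$, so $t^m\tilde\varepsilon_k \subset X_m$. The key observation is that crossing one of the cut arcs $\ell_r$ in the positive direction corresponds precisely to moving from the copy $X_k$ to $X_{k+1}$ in $\mathbf{C}_n$—this is the defining property of the gluing. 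Thus, if one lifts a path in $\mathbf{D}_n$ starting in $X_0$, the index of the copy $X_k$ one currently sits in equals the running signed count of crossings with $\ell_1 \cup \cdots \cup \ell_n$ made so far.

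First I would set up the lift carefully. The point $\tilde y \in \tilde\rho(\tilde\eta_j)$ is obtained by lifting the arc $\rho(\eta_j)$ starting from its initial point, which lies at the basepoint $\tilde x_0 \in X_0$ (since $\tilde\rho$ fixes the fiber pointwise and $\tilde\eta_j$ begins at $\tilde x_0$). Lifting the sub-arc $a \subset \rho(\eta_j)$ from its initial point to $y$ therefore produces a lift $\tilde a$ that starts in $X_0$ and ends at $\tilde y$, which lies in $X_{S(y)}$ by \Cref{def:sheet}. By the crossing-versus-gluing correspondence above, $S(y) = \sum_{r=1}^n a \cdot \ell_r$, where the intersection count is taken with the standard (transverse, signed) conventions and the orientation of $a$ inherited from $\rho(\eta_j)$.

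Second I would show the sub-arc $b \subset \varepsilon_k$ contributes nothing: $\sum_r b \cdot \ell_r = 0$. This is because $\varepsilon_k$ is, by definition, a properly embedded arc parallel to $\ell_k$ and lying just to its left, so $\varepsilon_k \cap \ell_k = \{z_k\}$ and $\varepsilon_k$ is disjoint from $\ell_r$ for $r \neq k$; the endpoint intersection at the puncture $z_k$ does not count (it is not a transverse interior intersection, and the arc model removes the punctures). Hence $b$—a sub-arc of $\varepsilon_k$ not containing $z_k$, since $b$ runs from $y$ out to $\partial D$—is disjoint from every $\ell_r$. Therefore $\sum_{r=1}^n \gamma_y \cdot \ell_r = \sum_{r=1}^n a \cdot \ell_r + \sum_{r=1}^n b \cdot \ell_r = S(y) + 0 = S(y)$, which is the claim.

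The main obstacle is bookkeeping the sign conventions so that ``positive crossing of $\ell_r$'' matches ``passing from $X_k$ to $X_{k+1}$'' under the chosen generator $t$ (as opposed to $t^{-1}$), consistent with the earlier normalization choice $t\tilde x_k = \tilde x_{k+1}$ and with the orientations of the $\ell_r$ and $\varepsilon_k$ fixed in \Cref{sec:fn}. One must verify this on a single elementary crossing and check that it is compatible with the explicit Burau formula (\ref{eq:explicit_burau}) via the worked example of \Cref{fig:fnb} (where $S(y_2) = S(y_3) = 1$): there, the arcs $\gamma_{y_2}$ and $\gamma_{y_3}$ should each cross the cut arcs with net count $+1$, while $\gamma_{y_1}$ and $\gamma_{y_4}$ cross with net count $0$. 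Once the orientation convention is pinned down against this example, the argument above is complete; there are no analytic difficulties, only the combinatorics of the cut-and-paste gluing.
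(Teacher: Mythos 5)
Your proof is correct and follows the same route as the paper's: split $\gamma_y = a\cup b$, observe that $b\subset\varepsilon_k$ meets no cut arc in ${\bf D}_n$, and match $\sum_r a\cdot\ell_r$ to the sheet index via the cut-and-paste model of ${\bf C}_n$. The paper dispatches the second step with ``straightforward to verify''; you supply the substance (lifting $a$ from $\tilde x_0\in X_0$, tracking the running signed crossing count as the index of the current copy $X_k$, and flagging the one place a sign convention must be pinned down against the worked example of \Cref{fig:fnb}), so your write-up is a more explicit version of the same argument rather than a different one.
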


	\begin{proof}
		Write $\gamma_y = a \cup b$, as above. Then, since $b \subset \varepsilon_k$ and $\varepsilon_k$ is disjoint from every cut arc, we have
		$$
		\sum_{r = 1}^n \gamma_y \cdot \ell_r = \sum_{r = 1}^n a \cdot \ell_r.
		$$
		It is straightfoward to verify that this quantity coincides with $S(y)$, using the cut-and-paste construction of ${\bf C}_n$ in the previous section.
	\end{proof}
	
	The arc $\gamma_y$ used here is analogous to the difference curves in Heegaard-Floer homology, which we define in the next section.
	
	\section{Background on Heegaard Floer Theory}
	\label{sec:hfs}
	
	\subsection{The Framework}
	\label{sec:hf}
	
	Heegaard Floer homology was originally defined by Ozsvath and Szabo \cite{os04c} for closed three-manifolds. Its basic input data is a {\em multi-pointed Heegaard diagram} $\mathcal{H} = (\Sigma, \alpha, \beta, p)$. By this we mean:
	\begin{itemize}
		\item An oriented, genus $g$ surface $\Sigma$.
		\item A set of distinguished basepoints $p = \{p_1, \dots, p_r\} \subset \Sigma$.
		\item Two sets of $k \geq g$ pairwise disjoint, simple closed curves in $\Sigma \setminus p$, labelled \break $\alpha = \{\alpha_1, \dots, \alpha_k\}$ and $\beta = \{\beta_1, \dots, \beta_k\}$, such that
		$$
		\text{rk}(\text{span}(\alpha_1, \dots, \alpha_k)) = \text{rk}(\text{span}(\beta_1, \dots, \beta_k)) = g
		$$
		as subspaces of $H_1(\Sigma)$, and such that $\alpha$ and $\beta$ are each homologically independent as subsets of $H_1(\Sigma \setminus p)$.
	\end{itemize}

	One can obtain a closed, oriented three-manifold $Y$ from a Heegaard diagram as follows: we begin with $\Sigma \times I$, and for each $j \in [k]$ we attach a two-handle to $\Sigma \times \{0\}$ along $\alpha_j \times \{0\}$, and attach another two-handle to $\Sigma \times \{1\}$ along $\beta_j \times \{1\}$. The resulting manifold has boundary a finite number of spheres, and we fill each of these with a three-ball to produce $Y$.
	
	\begin{conv}
		To avoid some technical complications in the following discussion, we assume $b_1(Y) = 0$.
	\end{conv}

	Given such a Heegaard diagram $\mathcal{H} = (\Sigma, \alpha, \beta, p)$, we consider the {\em $k$-fold symmetric product} $\text{Sym}^k(\Sigma)$ defined in Section \ref{sec:not}. The multi-curves $\alpha$ and $\beta$ embed in $\text{Sym}^k(\Sigma)$ as half-dimensional tori, $\mathbb{T}_\alpha$ and $\mathbb{T}_\beta$:
	\begin{align*}
		\mathbb{T}_\alpha &= \alpha_1 \times \cdots \times \alpha_k \hookrightarrow \text{Sym}^k(\Sigma) \\
		\mathbb{T}_\beta &= \beta_1 \times \cdots \times \beta_k \hookrightarrow \text{Sym}^k(\Sigma),
	\end{align*}
	and to each basepoint $p_j \in p$, we associate a hypersurface $\hat{p}_j$:
	$$
	\hat{p}_j = \{p_j\} \times \text{Sym}^{k -1}(\Sigma) \hookrightarrow \text{Sym}^k(\Sigma).
	$$
	The manifold $\text{Sym}^k(\Sigma)$ can be endowed with a symplectic structure coming from a symplectic structure on $\Sigma$, and the tori $\mathbb{T}_\alpha$ and $\mathbb{T}_\beta$ can be perturbed to Lagrangian submanifolds of $\text{Sym}^k(\Sigma)$.
	
	The Heegaard Floer homology $\widehat{HF}(\mathcal{H})$ of $\mathcal{H}$ is a variant of the Lagrangian Floer homology of $\mathbb{T}_\alpha$ and $\mathbb{T}_\beta$ in $\text{Sym}^k(\Sigma)$. This means $\widehat{HF}(\mathcal{H})$ is the homology of a chain complex $\widehat{CF}(\mathcal{H})$ over $\Z$, which is freely generated by the set of intersection points $\mathbb{T}_\alpha \cap \mathbb{T}_\beta$. Such a point $x \in \mathbb{T}_\alpha \cap \mathbb{T}_\beta$ is equivalent to a set of $k$ points $\{x_1, \dots, x_k\} \subset \Sigma$ with $x_j \in \alpha_j \cap \beta_{\sigma(j)}$ for each $j \in [k]$ and some fixed permutation $\sigma \in S_k$.
	
	The differential of $\widehat{CF}(\mathcal{H})$ is defined by counting homotopy classes of certain ``Whitney disks'' between two generators $x, y \in \mathbb{T}_\alpha \cap \mathbb{T}_\beta$. If $D \subset \C$ is the closed unit disk, then a Whitney disk is a map
	\begin{equation}
		\label{eq:disk}
		u : D \setminus \{i, - i\} \to \text{Sym}^k(\Sigma)
	\end{equation}
	such that $\lim_{z \to i} u(z) = x$, $\lim_{z \to -i} u(z) = y$,
	\begin{align*}
		&u(\{z \in \partial D \mid \mathfrak{Re}(z) < 0\}) \subset \mathbb{T}_\alpha, \text{ and} \\
		&u(\{z \in \partial D \mid \mathfrak{Re}(z) > 0\}) \subset \mathbb{T}_\beta.
	\end{align*}
	The map $u$ is also required to be pseudoholomorphic in an appropriate sense. Explicitly, if $\pi_2(x,y)$ denotes the set of homotopy classes of such disks, then the differential $\partial$ of $\widehat{CF}(\mathcal{H})$ is given by
	$$
	\partial(x) = \sum_{y \in \mathbb{T}_\alpha \cap \mathbb{T}_\beta} \#\{ D \in \pi_2(x,y) \mid \hat{p}_j \cdot D = 0 \text{ for all } j, \ \mu(D) = 1\} \cdot y.
	$$
	Here $\#\{*\} \in \Z$ is a signed count of such disks, and $\hat{p}_j \cdot D$ is the algebraic intersection number. The function $\mu$ is the {\em Maslov index}, which is a characteristic class taking values in $\Z$. The details here won't be important to us, but we note that any map $u$ as in (\ref{eq:disk}) is naturally equivalent to a map
	\begin{equation}
		\label{eq:branched_disk}
		\bar{u} : \bar{D} \to \Sigma,
	\end{equation}
	where $\bar{D}$ is a $k$-fold branched cover of $D$ depending on $u$. The hat flavor of Heegaard Floer homology, $\widehat{HF}(\mathcal{H})$, is the homology of $\widehat{CF}(\mathcal{H})$.
	
	Our assumption that $b_1(Y) = 0$ implies any two generators $x, y \in \mathbb{T}_\alpha \cap \mathbb{T}_\beta$ of $\widehat{CF}(\mathcal{H})$ can be connected by a Whitney disk, albeit one which may not contribute to the differential. One can use these disks to define a relative $\Z$-grading $\widetilde{M}$ on $\widehat{CF}(\mathcal{H})$ and $\widehat{HF}(\mathcal{H})$, called the (relative) {\em Maslov grading}, whose precise formulation depends on the basepoints. This grading can also be non-canonically upgraded to an absolute Maslov grading
	$$
	M : \widehat{CF}(\mathcal{H}) \to \Z
	$$
	by choosing some $x \in  \mathbb{T}_\alpha \cap \mathbb{T}_\beta$ and declaring $M(x) = 0$. We will not use the precise definition of the Maslov grading, but we will need the following fact.
	
	\begin{lemma}
		\label{lem:maslov}
		Fix an absolute Maslov grading $M$ on $\widehat{CF}(\mathcal{H})$, and fix orientations on $\mathbb{T}_\alpha$ and $\mathbb{T}_\beta$. Then for any $x \in \mathbb{T}_\alpha \cap \mathbb{T}_\beta$,
		$$
		(-1)^{M(x)} = c \cdot \text{sgn}(x),
		$$
		where $c \in \{+1, - 1\}$ is a global constant that does not depend on $x$.
	\end{lemma}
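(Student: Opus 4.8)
The plan is to reduce the statement, modulo $2$, to a standard feature of Lagrangian Floer theory: the Maslov index of a Whitney disk records the difference of the local intersection signs of its two endpoints. First I would recall that the relative Maslov grading is normalized so that
$$
M(x) - M(y) = \mu(D) - 2\sum_{j} (\hat{p}_j \cdot D)
$$
for every $D \in \pi_2(x,y)$, and that our standing assumption $b_1(Y) = 0$ guarantees such a $D$ exists for any pair $x, y \in \mathbb{T}_\alpha \cap \mathbb{T}_\beta$. The basepoint correction $2\sum_j (\hat{p}_j \cdot D)$ is even, so reducing mod $2$ gives $(-1)^{M(x) - M(y)} = (-1)^{\mu(D)}$.

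The crux is then the purely symplectic fact that, for transverse intersection points $x, y$ of $\mathbb{T}_\alpha$ and $\mathbb{T}_\beta$ and any $D \in \pi_2(x,y)$,
$$
(-1)^{\mu(D)} = \text{sgn}(x) \cdot \text{sgn}(y),
$$
the signs taken with respect to the fixed orientations of $\mathbb{T}_\alpha$ and $\mathbb{T}_\beta$. To see this, trivialize the pullback of $T\,\text{Sym}^k(\Sigma)$ over the disk; the two boundary arcs of $D$ then trace paths of Lagrangian planes $\Lambda_\alpha(s)$ and $\Lambda_\beta(s)$ (tangent to $\mathbb{T}_\alpha$ and $\mathbb{T}_\beta$ respectively), which are transverse at the two endpoints, and $\mu(D)$ is, up to the usual orientation conventions, the Maslov index of the loop obtained by closing this path of pairs with short paths at $x$ and at $y$. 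By the crossing-form description of the Maslov index, the parity of this index equals the parity of the number of non-transverse instants counted with multiplicity, which is precisely the number of times the co-orientation of the transverse pair $(\Lambda_\alpha, \Lambda_\beta)$ reverses; hence the parity detects whether $\text{sgn}(x)$ and $\text{sgn}(y)$ agree. An alternative closer to this paper's setting is to use that generators have distinct coordinates, so near $x$ the manifold $\text{Sym}^k(\Sigma)$ is an honest product and $\mathbb{T}_\alpha$, $\mathbb{T}_\beta$ are products of the curves $\alpha_j$ and $\beta_{\sigma(j)}$, reducing matters to the one-dimensional picture on $\Sigma$. In either form the fact is standard and may simply be cited from the sign-refined accounts of $\widehat{HF}$.

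Combining the two displayed identities with the normalization $M(x_0) = 0$ at the chosen reference generator $x_0$ gives $(-1)^{M(x)} = \text{sgn}(x_0)\cdot \text{sgn}(x)$ for all $x$, so the lemma holds with $c = \text{sgn}(x_0) \in \{+1, -1\}$. The main obstacle is the second step, and there the difficulty is bookkeeping rather than substance: one must fix conventions for the Maslov index, for the order of the pairing $\mathbb{T}_\alpha \cdot \mathbb{T}_\beta$, and for the short paths used to close up the Lagrangian loop. Since none of these choices depend on $x$, any resulting ambiguity is harmless, being absorbed into the global constant $c$; for the purposes of this paper it is reasonable to present the argument at this level or to invoke the corresponding statement from the literature directly.
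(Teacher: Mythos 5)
Your proof is correct and its conclusion matches the paper's, but you take a genuinely different route to the key mod-$2$ fact $\mu(D) \equiv 0 \pmod 2 \Leftrightarrow \text{sgn}(x) = \text{sgn}(y)$. You argue in general Lagrangian Floer language: trivialize the tangent bundle over the disk, read off a loop of Lagrangian pairs from the boundary of the Whitney disk, and invoke the crossing-form description of the Maslov index, which counts sign reversals of the transverse pair $(\Lambda_\alpha, \Lambda_\beta)$ mod $2$. The paper instead works combinatorially inside the Heegaard diagram: it passes from the Whitney disk $u$ to its associated domain $\mathcal{D}$ on $\Sigma$, checks the identity by hand when $\mathcal{D}$ is a single bigon or rectangle using the explicit sign formula (\ref{eq:orientation}) and the combinatorial Maslov index, and then reduces the general case to compositions of such pieces via Krutowski's result. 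Your approach is shorter, appeals to a fact that is standard across Lagrangian Floer theories, and sidesteps domain bookkeeping entirely; the paper's approach stays strictly within the Heegaard-Floer combinatorial framework (avoiding any trivialization or crossing-form technology), which is what it re-uses later in the bordered setting in \Cref{lem:bs_maslov}, where the same bigon-rectangle reduction argument is invoked again. Your second alternative---localizing near a generator where $\text{Sym}^k(\Sigma)$ is an honest product---is closer in spirit to the paper's route but left too vague to stand on its own, since the Maslov index is a property of the whole disk, not of the endpoint neighborhood; either cite the crossing-form argument in full or carry out the domain reduction as the paper does.
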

	
	It is not necessary to orient $\text{Sym}^k(\Sigma)$ in Lemma \ref{lem:maslov}, because this space has an orientation induced by the orientation of $\Sigma$. Similarly, to orient $\mathbb{T}_\alpha$ and $\mathbb{T}_\beta$, it suffices to orient each $\alpha$ and $\beta$ curve. Then if $x = \{x_1, \dots, x_k\} \in \mathbb{T}_\alpha \cap \mathbb{T}_\beta$, such that $x_j \in \alpha_j \cap \beta_{\sigma(j)}$ for some $\sigma \in S_k$, the sign of $x$ can be computed from the Heegaard diagram by the formula
	\begin{equation}
		\label{eq:orientation}
		\text{sgn}(x) = (-1)^{\text{sgn}(\sigma)} \cdot \prod_{j = 1}^k \text{sgn}(x_j).
	\end{equation}
	Here $\text{sgn}(\sigma) \in \{0,1\}$ reflects whether $\sigma$ is even or odd, and sgn$(x_j) \in \{-1,1\}$ is the sign of $x_j$ as an intersection point $\alpha_j \cap \beta_{\sigma(j)}$ (see Section \ref{sec:not}).
	
	Although Lemma \ref{lem:maslov} is well-known in the literature---see, for example, \cite[Section 2.3.3]{man10}---it is difficult to find an explicit proof. We sketch one here for this reason, and because we will need it in Section \ref{sec:bstq}.
	
	\begin{proof}[Proof of Lemma \ref{lem:maslov}]
		Let $x, y \in \mathbb{T}_\alpha \cap \mathbb{T}_\beta$ be generators of $\widehat{CF}(\mathcal{H})$, and let $D$ be a Whitney disk from $x$ to $y$. Then the relative Maslov grading $\widetilde{M}(y) - \widetilde{M}(x)$ agrees mod 2 with the Maslov index $\mu(D)$. Define a map $\mathcal{O} : D \to \Z/2$ on such disks by
		$$
		\mathcal{O}(D) = \begin{cases}
			0 & \text{sgn}(x) = \text{sgn}(y) \\
			1 & \text{sgn}(x) \neq \text{sgn}(y)
		\end{cases};
		$$
		then we must check that $\mu(D) \equiv \mathcal{O}(D)$ mod 2. Via the correspondence of (\ref{eq:branched_disk}), to any such disk $D$ we can associate a {\em domain} $\mathcal{D}$, which is a formal linear combination of the components of
		$$
		\Sigma \setminus \{\alpha_1, \dots, \alpha_k, \beta_1, \dots, \beta_k\}.
		$$
		The Maslov of index of a domain can be defined combinatorially---see \cite{lip06, kru24} for details. In the special case where $\mathcal{D}$ is a single bigon or square with coefficient 1, one can use (\ref{eq:orientation}) and this combinatorial formula to check that
		$$
		\mathcal{O}(D) \equiv \mu(D) \equiv 1 \text{ mod } 2.
		$$
		In general, by \cite[Theorem 3.1]{kru24}, the Heegaard diagram $\mathcal{H}$ can be adjusted in a controlled way to represent any domain $\mathcal{D}$ by a new domain $\mathcal{D}'$ which is a composition of bigons and rectangles. The result then follows from the fact that the index $\mu$ and orientation difference $\mathcal{O}$ are unchanged by these adjustments, and are both additive under compositions.
	\end{proof}
	
	\subsection{Link Floer homology}
	\label{sec:hfk}
	
	Let $K \subset S^3$ be an oriented link with $n(K)$ components. A {\em Heegaard surface adapted to $K$} is an embedded, oriented, genus $g$ surface $\Sigma \subset S^3$ such that:
	\begin{itemize}
		\item The closure of each component of $S^3 - \Sigma$ is a handlebody. We denote these by $H_1$ and $H_2$.
		\item $K$ intersects $\Sigma$ in $2m$ points for some $m > 0$, and $K \cap H_j$ is a trivial $m$-tangle for $j = 1, 2$. By this we mean that $K \cap H_j$ consists of $m$ properly embedded arcs, each of which is unknotted and parallel to $\Sigma = \partial H_j$ in the complement of the other $m - 1$ components.
	\end{itemize}
	To such a surface $\Sigma$, we can associate a {\em multi-pointed Heegaard diagram adapted to $K$},
	$$
	\mathcal{H} = (\Sigma, {\bf \alpha}, {\bf \beta}, z \cup w).
	$$
	This consists of the following data.
	\begin{itemize}
		\item Two sets of $m$ basepoints, $z = \{z_1, \dots, z_m\}$ and $w = \{w_1, \dots, w_m \}$ in $\Sigma$, such that $z$ is the set of positive intersections in $K \cap \Sigma$ and $w$ is the negative intersection points.
		\item Two sets of $g + m - 1$ pairwise disjoint, simple closed curves $\alpha = \{\alpha_1, \dots, \alpha_{g + m - 1}\}$ and $\beta = \{\beta_1, \dots, \beta_{g + m - 1} \}$, such that:
		\begin{itemize}
			\item The curves $\alpha$ are homologically independent in $\Sigma \setminus (z \cup w)$, as are the the curves $\beta$.
			\item Each $\alpha_j$ bounds a disk in $H_1 \setminus K$, and each $\beta_j$ bounds a disk in $H_2 \setminus K$.
		\end{itemize}
	\end{itemize}
	The hat flavor of link Floer homology is the Heegaard Floer homology of this diagram, as defined in the preceeding section:
	$$
	\widehat{HFL}(K) = \widehat{HF}(\mathcal{H}).
	$$
	These homology groups are an invariant of the link $K$.
		
	In addition to the relative Maslov grading, the chain complex $\widehat{CF}(\mathcal{H})$ admits a relative $\Z^{n(K)}$-grading called the {\em Alexander grading}. To define this, fix two generators $x = \{x_1, \dots, x_{g + m - 1}\}, y = \{y_1, \dots, y_{g + m - 1}\} \in \mathbb{T}_\alpha \cap \mathbb{T}_\beta$. Then each curve $\alpha_j$ contains exactly one $x_k \in x$ and one $y_\ell \in y$, and for each $j \in [g + m - 1]$ we choose an oriented arc $a_j \subset \alpha_j$ with initial point $x_k$ and terminal point $y_\ell$. Likewise, each curve $\beta_j$ contains exactly one $x_k \in x$ and $y_\ell \in y$, and we choose an oriented arc $b_j \subset \beta_j$ with initial point $y_\ell$ and terminal point $x_k$. The union of these arcs,
	\begin{equation}
		\label{eq:diff_curve}
		\gamma_{x, y} = a_1 \cup b_1 \cup \cdots \cup a_{g + m - 1} \cup b_{g + m - 1},
	\end{equation}
	is an oriented multi-curve in $\Sigma \setminus (z \cup w)$, which we also view as a multi-curve in $S^3 \setminus K$ via the embedding $\Sigma \subset S^3$. We call $\gamma_{x,y}$ a {\em difference curve} for $x$ and $y$. The {\em difference class} of $x$ and $y$ is the value
	\begin{equation}
		\label{eq:ma_grading}
		[\gamma_{x,y}] \in H_1(S^3 \setminus K) \cong \Z^{n(K)},
	\end{equation}
	where each oriented meridian of a component of $K$ generates a different summand of $\Z^{n(K)}$. Since each $\alpha$ and $\beta$ curve is nullhomologous in $S^3 \setminus K$, the difference class $[\gamma_{x,y}]$ does not depend on the choice of difference curve $\gamma_{x,y}$. The relative Alexander grading $\widetilde{A}$ is then defined by
	$$
	\widetilde{A}(x) - \widetilde{A}(y) = [\gamma_{x, y}] \in  \Z^{n(K)}.
	$$
	As with the Maslov grading, the Alexander grading can be non-canonically upgraded to an absolute grading $A : \widehat{HFL}(K) \to \Z$ by setting $A(x) = 0$ for an arbitrary generator.
	
	The differential of $\widehat{CF}(\mathcal{H})$ respects the Alexander grading. We sketch a proof of this fact, as it will be useful in the next section.
	
	\begin{lemma}
		\label{lem:alex_disc}
		Let $x, y \in \mathbb{T}_\alpha \cap \mathbb{T}_\beta$ be generators of $\widehat{CF}(\mathcal{H})$, and suppose there exists a Whitney disk
		$$
		u : D \setminus \{i, - i\} \to \text{Sym}^{g + m - 1}(\Sigma)
		$$
		from $x$ to $y$ with
		\begin{equation}
			\label{eq:zero_int}
			u(D) \cdot \hat{z}_j = u(D) \cdot \hat{w}_j = 0
		\end{equation}
		for all basepoints $z_j \in z$ and $w_j \in w$. Then $\widetilde{A}(x) - \widetilde{A}(y) = 0$.
	\end{lemma}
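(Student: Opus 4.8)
The plan is to convert the hypothesis on the Whitney disk $u$ into combinatorial data on $\Sigma$, and then to recognize the boundary of that data as a difference curve whose class in $H_1(S^3\setminus K)$ must vanish. First I would pass from $u$ to its associated \emph{domain} $\mathcal D$ via the branched-cover correspondence \eqref{eq:branched_disk}: $\mathcal D$ is a $\Z$-linear combination of the components of $\Sigma\setminus(\alpha\cup\beta)$, and the standard fact that $u(D)\cdot\hat z_j$ (resp.\ $u(D)\cdot\hat w_j$) equals the multiplicity $n_{z_j}(\mathcal D)$ (resp.\ $n_{w_j}(\mathcal D)$) of $\mathcal D$ at that basepoint turns the hypothesis \eqref{eq:zero_int} into the statement that $\mathcal D$ has multiplicity zero at every point of $z\cup w$. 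Because $u$ connects $x$ to $y$, the boundary of $\mathcal D$ splits as $\partial_\alpha\mathcal D+\partial_\beta\mathcal D$, where $\partial_\alpha\mathcal D$ is a $1$-chain supported on $\alpha_1\cup\cdots\cup\alpha_{g+m-1}$ whose boundary is the point set of $y$ minus that of $x$ (up to an overall sign), and symmetrically for $\partial_\beta\mathcal D$. Comparing with \eqref{eq:diff_curve}, on each $\alpha_j$ the chains $\partial_\alpha\mathcal D$ and $a_j$ have the same boundary, hence differ by an integer multiple of the full circle $\alpha_j$; likewise on each $\beta_j$. So $\partial\mathcal D$ equals a difference curve $\gamma_{x,y}$ up to adding full $\alpha$- and $\beta$-curves.

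Next I would compute linking numbers with the components $K_1,\dots,K_{n(K)}$ of $K$. Each $\alpha_j$ bounds a disk in $H_1\setminus K$ and each $\beta_j$ bounds a disk in $H_2\setminus K$, so the added full $\alpha$- and $\beta$-curves contribute zero to every $\mathrm{lk}(\,\cdot\,,K_i)$, and it suffices to compute $\mathrm{lk}(\partial\mathcal D,K_i)$. Viewing $\mathcal D$ inside $\Sigma\subset S^3$ as a $2$-chain bounded by $\partial\mathcal D$, and recalling that $K\cap\Sigma=z\cup w$ with the points of $z$ positive and those of $w$ negative, one gets
\[
\mathrm{lk}(\partial\mathcal D,K_i)=\pm\,\mathcal D\cdot K_i=\pm\Big(\sum_{z_j\in K_i} n_{z_j}(\mathcal D)-\sum_{w_j\in K_i} n_{w_j}(\mathcal D)\Big),
\]
which is zero by the previous paragraph. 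Hence $[\gamma_{x,y}]=0$ in $H_1(S^3\setminus K)\cong\Z^{n(K)}$ (cf.\ \eqref{eq:ma_grading}), and by the definition of the relative Alexander grading this is precisely $\widetilde A(x)-\widetilde A(y)=0$.

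The content of the argument is entirely standard, so the only real work is bookkeeping. The point to be careful about is that $\partial\mathcal D$ need not be embedded and so is not literally a difference curve in the sense of \eqref{eq:diff_curve}; one must check that it nonetheless represents the same class in $H_1(S^3\setminus K)$, which is exactly where the ``bounds a disk in $H_j\setminus K$'' hypotheses on the $\alpha$- and $\beta$-curves enter. I would also make explicit the identity $n_{z_j}(\mathcal D)=u(D)\cdot\hat z_j$, immediate from the definition $\hat z_j=\{z_j\}\times\text{Sym}^{g+m-2}(\Sigma)$ and the picture \eqref{eq:branched_disk}, since it is the bridge between the analytic hypothesis on $u$ and the combinatorics of $\mathcal D$. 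Orientation conventions enter only through overall signs, which are harmless here because the conclusion is that the class vanishes.
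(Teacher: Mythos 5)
Your proposal is correct and takes essentially the same route as the paper: pass from $u$ to a domain/branched-cover picture, use \eqref{eq:zero_int} to get zero multiplicity at each basepoint, and conclude via linking numbers in $S^3$ that the difference class vanishes. The paper's proof is terser and simply asserts $\bar u(\partial\bar D)$ is a difference curve; your version is slightly more careful in noting that $\partial\mathcal D$ may differ from a difference curve by full $\alpha$- and $\beta$-circles and that this is harmless because those curves are nullhomologous in $S^3\setminus K$.
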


	\begin{proof}
		As discussed in Section \ref{sec:hf}, the map $u$ is equivalent to a map
		$$
		\bar{u} : \bar{D} \to \Sigma \setminus (z \cup w),
		$$
		where $\pi : \bar{D} \to D$ is some $(g + m - 1)$-fold branched cover of $D$. Broadly, the cover $\bar{D}$ and map $\bar{u}$ are determined by requiring that for all $z \in D$, $u(z) \in \text{Sym}^{g + m - 1}(\Sigma)$ and $\bar{u}(\pi^{-1}(z)) \subset \Sigma$ are equal when $u(z)$ is thought of as a subset of $\Sigma$. In particular, the image of the boundary $\bar{u}(\partial \bar{D}) \subset \Sigma$ is a difference curve $\gamma_{x,y}$ for $x$ and $y$. Since $u$ satisfies (\ref{eq:zero_int}), the domain $\bar{D} \to \Sigma$ has algebraic intersection number zero with all basepoints in $z$ and $w$. Viewng $\Sigma$ as a subset of $S^3$, this implies the curve $\gamma_{x,y}$ has zero linking number with $K$, and it follows that $[\gamma_{x,y}]= 0 \in H_1(S^3 \setminus K)$.
	\end{proof}
	Since the Alexander grading obstructs differentials by Lemma \ref{lem:alex_disc}, it descends to a relative grading on the groups $\widehat{HFL}(K)$. This grading is also an invariant of the link $K$.
	
	One appealing feature of link Floer homology is that its graded Euler characteristic is the multi-variable Alexander polynomial. Specifically, given a link $K \subset S^3$ with $n$ components, fix absolute Alexander and Maslov gradings for $\widehat{HFL}(K)$. We view the Alexander grading as a map
	$$
	A : \widehat{HFL}(K) \to \Z^n,
	$$
	and for $k \in \Z$ and ${\bf j} = (j_1, \dots, j_n) \in \Z^n$, let $\widehat{HFL}_{{\bf j}k}(K)$ denote the subgroup of $\widehat{HFL}(K)$ supported in Alexander grading ${\bf j}$ and Maslov grading $k$. Then
	\begin{equation}
		\label{eq:cat_alex}
		\Delta_K(t_1, \dots, t_n) = \sum_{j_1, \dots, j_n,k \in \Z} (-1)^k t_1^{j_1} \cdots t_n^{j_n} \text{rk}(\widehat{HFL}_{{\bf j}k}(K)).
	\end{equation}
	The fact that the Alexander and Maslov gradings are defined up to global shifts matches the fact that $\Delta_K$ is defined up to multiplication by units of $\Z[t_1^{\pm 1}, \dots, t_n^{\pm 1}]$.
	
	\subsection{Knot Floer homology}
	
	For any $n$-component link $K \subset S^3$, the relative Alexander $\Z^n$-grading on $\widehat{HFL}(K)$ can be collapsed to a relative $\Z$-grading by composing the homomorphism (\ref{eq:ma_grading}) with the map
	\begin{align*}
	\Z^{n} &\to \Z, \\
	(e_1, \dots, e_n) &\mapsto \sum_{j = 1}^n e_j.
	\end{align*}
	
	Homologically, this map sends each oriented meridian of $K$ to a generator of $\Z$, and coincides with the Alexander grading above if $K$ is a knot. When we use this single-variable Alexander grading rather than multi-variable one, we call the resulting Floer homology groups the {\em knot Floer homology} of $K$, denoted $\widehat{HFK}(K)$. Just as the link Floer groups categorify the multi-variable Alexander polynomial, the knot Floer groups categorify the single-variable Alexander polynomial in a manner analogous to (\ref{eq:cat_alex}).

	\section{The Burau Representation and Link Floer Homology}
	\label{sec:theory}
	
	\subsection{Bridge presentations for braid closures}
	\label{sec:bridges}
	
	As in Section \ref{sec:ub}, let $D \subset \C$ be the closed unit disk, and let $z = \{z_1, \dots, z_n\}$ be $n$ points in $D$ with zero imaginary part and 
	$$
	-1 < \mathfrak{Re}(z_1) < \mathfrak{Re}(z_2) < \cdots < \mathfrak{Re}(z_n) < 1.
	$$
	Additionally, let $w = \{w_1, \dots, w_n\} \in \C$ be $n$ points with $\mathfrak{Re}(w_j) = \mathfrak{Re}(z_j)$ and $\mathfrak{Im}(w_j) = -2$ for $j = 1, \dots, n$. We consider $z$, $w$, $D$ and $\C$ as subsets of $S^2$ by taking the one-point compactification $S^2 = \C \cup \{\infty\}$. Let $\rho \in B_n$ be a braid, considered as a homeomorphism $\rho : (D, z) \to (D, z)$ defined up to isotopy. Since $\rho$ fixes $\partial D$ pointwise, $\rho$ extends to a homeomorphism $S^2 \to S^2$ which fixes all points of $S^2 - D$, and we denote this extension also by $\rho$.
	
	Let $K \subset S^3$ be the oriented link which is the (geometric) braid closure of $\rho$, and let $m(K) \subset S^3$ be the mirror of $K$. (For convention reasons, we'll need to work with $m(K)$ rather than $K$.) We now describe a {\em bridge presentation} for $m(K)$ coming from $\rho$. By this we mean a diagram for $m(K)$ in $S^2$, along with a $1$-complex structure on $m(K)$ consisting of $2n$ vertices, $n$ {\em underbridge} arcs $u_1, \dots, u_n$, and $n$ {\em overbridge} arcs $o_1, \dots, o_n$, such that:
	\begin{itemize}
		\item The underbridges do not cross each other in the diagram, and likewise the overbridges do not cross each other.
		\item Whenever an underbridge crosses an overbridge, the former passes under the latter.
	\end{itemize}
	To define the overbridges, let $\bar{o}_j$ be a vertical line segment running from $w_j$ to $z_j$ for $j = 1, \dots, n$. Then let $o_j = \rho(\bar{o}_j)$ for all $j$.

	Unlike the overbridges, which depend on $\rho$, the underbridges will be the same for all braids. For $j = 1, \dots, n$, we define $u_j$ to be the arc which:
	\begin{enumerate}[label=\arabic*.]
		\item Begins at $z_j$, and travels upward vertically until it reaches the line $\mathfrak{Im}(x) = 1$, then
		\item Travels to the right along a semi-circle, then
		\item Travels vertically downward to the line $\mathfrak{Im}(x) = -3$,
		\item Travels to the left along a semi-circle, and finally
		\item Travels vertically upward and terminates at the point $w_j$.
	\end{enumerate}
	
	\begin{defn}
		\label{def:bridge}
		We let $\widehat{F}$ be the sphere containing the above bridge presentation, and we denote this presentation of $m(K)$ by
		$$
		\mathcal{B}(\rho) = (\widehat{F}, u_1, \dots, u_n, o_1, \dots, o_n, z, w).
		$$
	\end{defn}
	
	Figure \ref{fig:bridge_diag} shows the diagram $\mathcal{B}(\rho)$ associated to the braid $\rho = \sigma_1^2 \in B_2$; then $\mathcal{B}(\rho)$ is a bridge presentation of the Hopf link, which happens to be amphichiral. (The points $p_L$ and $p_R$ in the figure are defined below.) In general, we leave it to the reader to check that if we take the mirror of the diagram $\mathcal{B}(\rho)$, then we obtain a diagram for $K$---the mirrored $o_j$ arcs correspond to the braid $\rho$, and the mirrored $u_j$ arcs take the closure of $\rho$ in $S^3$. Assuming this, it follows that $\mathcal{B}(\rho)$ is a diagram for $m(K)$. We can also think of $\mathcal{B}(\rho)$ as presenting $m(K)$ as the closure of a different braid---see Remark \ref{rmk:closure} below.
	
	\begin{conv}
		\label{conv:embedded}
		It will be helpful to think of the sphere $\widehat{F}$ as embedded in $S^3$, so that $\widehat{F} \cap m(K) = z \cup w$ and $\widehat{F}$ is a Heegaard surface for $m(K)$ in the sense of Section \ref{sec:hfk}, with the overbridges $o_j$ lying on one side of $\widehat{F}$ and the underbridges $u_j$ lying on the other. Additionally, we let $F$ denote the punctured sphere $F = \widehat{F} \setminus (z \cup w)$.
	\end{conv}
	
	\begin{rmk}
	\label{rmk:closure}
	The diagram $\mathcal{B}(\rho)$ can also be thought of as a braid closure: let $\iota : (D, z) \to (D, z)$ be a reflection of $D$ about the axis $\mathfrak{Re}(\zeta) = 0$, isotoped so that $z_j$ and $z_{n + 1 - j}$ swap places for all $j$. Let $\bar{\rho} \in B_n$ be the conjugation of $\rho$ by $\iota$. Then $\mathcal{B}(\rho)$ expresses $m(K)$ as the closure of $\bar{\rho}$. This can be seen by rotating $m(K)$ 180 degrees around the axis $\mathfrak{Re}(\zeta) = 0$, out of the plane of the diagram, so that underbridges become overbridges and vice versa.
	\end{rmk}
	
	Thinking of $m(\mathcal{B}(\rho))$ as the closure of $\rho$, or of $\mathcal{B}(\rho)$ as the closure of $\bar{\rho}$ as described in the preceeding remark, it makes sense to consider the braid axis of the closure. This axis will be useful in future sections.
	
	\begin{defn}
		\label{def:meridian}
		Let $U \subset S^3 \setminus m(K)$ be an unknot in the complement of $m(K)$, such that $U$ bounds a disk which intersects each overbridge once positively, and which is disjoint from all the underbridges. Equivalently, $U$ is an axis around which $m(K)$ is braided. We position $U$ so that $U \cap \widehat{F}$ consists of two points, $p_L$ and $p_R$, which lie at $-1 - i$ and $-1 + i$ respectively, as in Figure \ref{fig:bridge_diag}. 
	\end{defn}
	
	\begin{figure}
		\labellist
		\small\hair 2pt
		\pinlabel $z_1$ at 200 490
		\pinlabel $z_2$ at 285 490
		\pinlabel $w_1$ at 205 190
		\pinlabel $w_2$ at 280 190
		\pinlabel $o_1$ at 140 280
		\pinlabel $o_2$ at 345 280
		\pinlabel $u_1$ at 760 420
		\pinlabel $u_2$ at 620 420
		\pinlabel $p_L$ at -25 280
		\pinlabel $p_R$ at 520 280
		\endlabellist
		
		\centering
		\includegraphics[height=8cm]{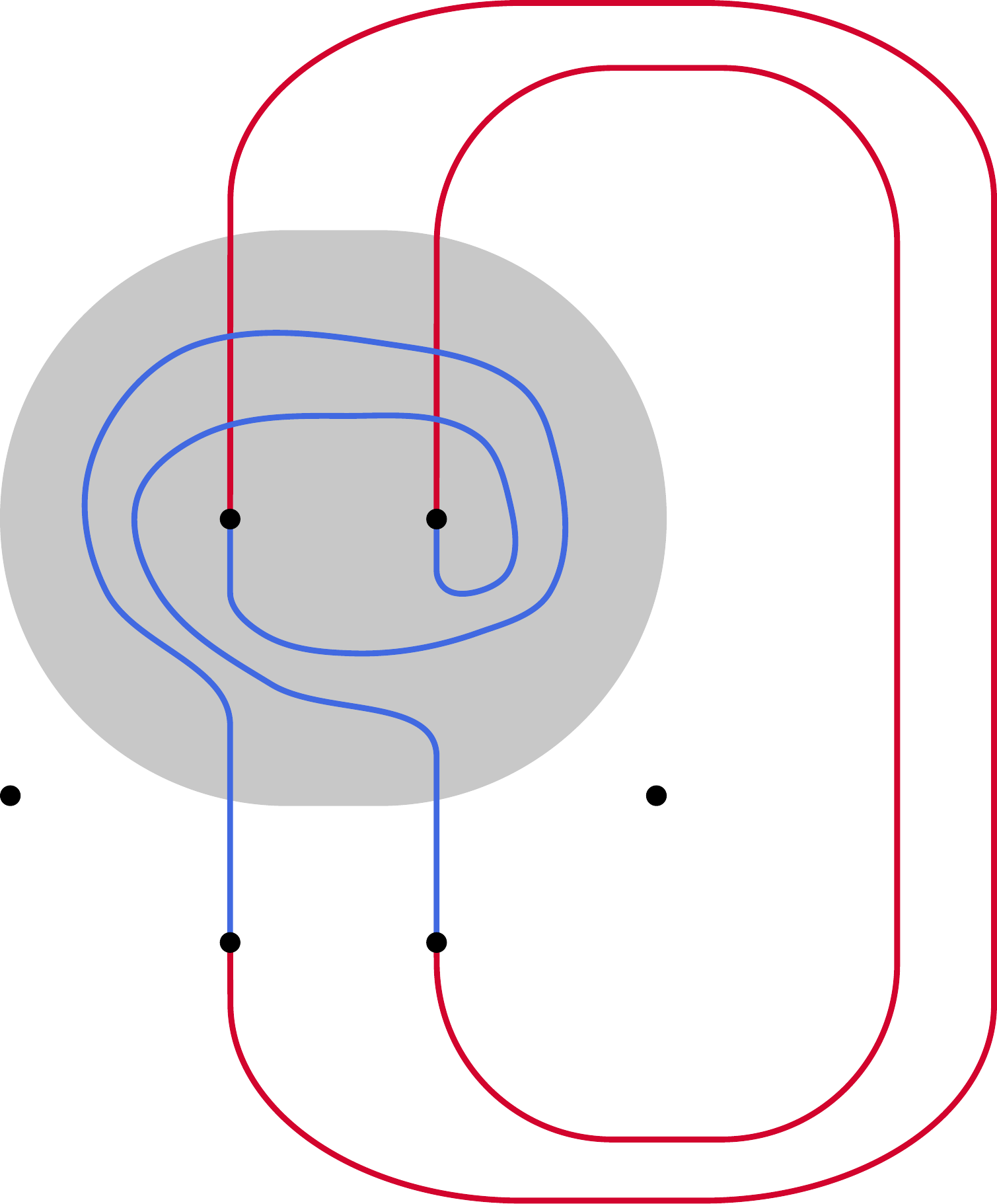}
		\caption{The bridge diagram $\mathcal{B}(\rho)$ for the braid $\rho = \sigma_1^2 \in B_2$}
		\label{fig:bridge_diag}
	\end{figure}

	\subsection{Heegaard diagrams}
	\label{sec:diags}
	
	As in Section \ref{sec:ub}, let
	$$
	\psi_n : B_n \to GL_n(\Z[t,t^{-1}])
	$$
	be the full Burau representation, and fix a braid $\rho \in B_n$. Let $m$ be an integer satisfying $1 \leq m \leq n$, and let ${\bf j}$ and ${\bf k}$ be multi-indices of size $m$. More specifically, let ${\bf j} = \{j_1, \dots, j_m\}$ be such that
	$$
	1 \leq j_1 < j_2 < \cdots < j_m \leq n,
	$$
	and define ${\bf k} = \{k_1, \dots, k_m\}$ analogously. Let $I_n$ be the $n$-by-$n$ identity matrix, let $\lambda$ be a variable, and let $A^{\bf jk}$ be the $m$-by-$m$ submatrix of $\psi_n(\rho) - \lambda I_n$ consisting of the intersection of the rows ${\bf j}$ with the columns ${\bf k}$. In this section we accomplish the first objective of the paper: we will define a Heegaard-Floer type homology theory that categorifies $\det(A^{\bf jk})$.
	
	Let $K \subset S^3$ be the braid closure of $\rho$, and let $\mathcal{B}(\rho)$ be the bridge presentation for $m(K)$ from Definition \ref{def:bridge}. We will use ${\bf j}$, ${\bf k}$ and $\mathcal{B}(\rho)$ to build a multi-pointed Heegaard diagram. Our method is a straightforward generalization of Rasmussen's original construction of Heegaard diagrams from bridge presentations \cite[Section 3.2]{ras03}---specifically, Rasmussen's construction corresponds to case where $m = n - 1$. The $m = n - 1$ case of our construction is also very similar to the Heegaard diagrams defined using braid closures in \cite[Section 3]{bvvv13} and \cite[Section 2.3]{lsvv13}, though these papers don't mention the Burau representation.
	
	We define the curves $\beta = \{\beta_{j_1}, \dots, \beta_{j_m}\}$ by letting $\beta_{j_s} \subset \widehat{F}$ be the boundary of a regular neighborhood of the overbridge $o_{j_s}$ for $s \in [m]$. Next, for $s \in [m]$, let $B_\epsilon(z_{k_s}), B_\epsilon(w_{k_s}) \subset \widehat{F}$ be open disks of radius $\epsilon$ centered at $z_{k_s}$ and $w_{k_s}$, with $\epsilon > 0$ small enough that the disks are disjoint from all $\beta$ curves. For each such $k_s$, we remove $B_\epsilon(z_{k_s})$ and $B_\epsilon(w_{k_s})$ from $\widehat{F}$ and attach a handle along the resulting boundaries in an orientation-preserving way. We then join the endpoints of the truncated underbridge $u_{k_s} \setminus (B_\epsilon(z_{k_s}) \cup B_\epsilon(w_{k_s}))$ via an arc through the handle, and let $\alpha_{k_s}$ be the resulting closed curve. We set $\alpha = \{\alpha_{k_1}, \dots, \alpha_{k_m}\}$, and we let $\Sigma$ denote the genus $m$ surface obtained from the handle attachments. As in Convention \ref{conv:embedded}, we think of the surface $\Sigma$ as embedded in $S^3$, with the handle attachments done in the complement of $m(K)$, so that
	\begin{equation}
		\label{eq:conv}
		\Sigma \cap m(K) = (z \setminus \{z_{k_1}, \dots, z_{k_m}\}) \cup (w \setminus \{w_{k_1}, \dots, w_{k_m}\}).
	\end{equation}
	
	We forget any overbridges $o_j$ such that $j \notin {\bf j}$, and any underbridges $u_k$ with $k \notin {\bf k}$. However, we keep the two sets of basepoints remaining after the handle attachments, $z \setminus \{z_{k_1}, \dots, z_{k_m}\}$ and $w \setminus \{w_{k_1}, \dots, w_{k_m}\}$. We also add the two additional basepoints
	$$
	\{p_L, p_R\} = U \cap \widehat{F} = U \cap \Sigma
	$$
	from Definition \ref{def:meridian}. 
	
	\begin{defn}
		\label{def:heediag}
		We denote the Heegaard diagram resulting from this construction by
		\begin{equation}
			\mathcal{H}(\rho, {\bf j}, {\bf k}) = (\Sigma, \alpha, \beta, (z \setminus \{z_{k_1}, \dots, z_{k_m}\}) \cup (w \setminus \{w_{k_1}, \dots, w_{k_m}\}) \cup \{p_L, p_R\}).
		\end{equation}
	\end{defn}
	
	See Figure \ref{fig:heediags} for two examples of Heegaard diagrams associated to the braid $\sigma_1^2$ in $B_2$, constructed using the bridge diagram of Figure \ref{fig:bridge_diag}.
	
	\begin{rmk}
		The reader has likely noticed that, in the above construction of $\mathcal{H}(\rho, {\bf j}, {\bf k})$, we've labelled the curve sets $\alpha$ and $\beta$, as well as the basepoints coming from $z$ and $w$, using non-consecutive indices. This will simplify notation in the next section.
	\end{rmk}
	
	We now define our homology theory.
	
	\begin{figure}
		\labellist
		\small\hair 2pt
		\pinlabel $\beta_1$ at 125 280
		\pinlabel $\beta_2$ at 370 280
		\pinlabel $\alpha_1$ at 760 420
		\pinlabel $\alpha_2$ at 620 420
		\pinlabel $p_L$ at -25 280
		\pinlabel $p_R$ at 520 280
		\pinlabel $p_L$ at 840 280
		\pinlabel $p_R$ at 1365 280
		\pinlabel $\alpha_2$ at 1535 420
		\pinlabel $\beta_2$ at 1225 280
		\pinlabel $w_1$ at 1062 190
		\endlabellist
		
		\centering
		\subcaptionbox{The diagram $\mathcal{H}(\rho, \{1,2\}, \{1,2\})$ \label{fig:hda}}{
			\includegraphics[height=8cm]{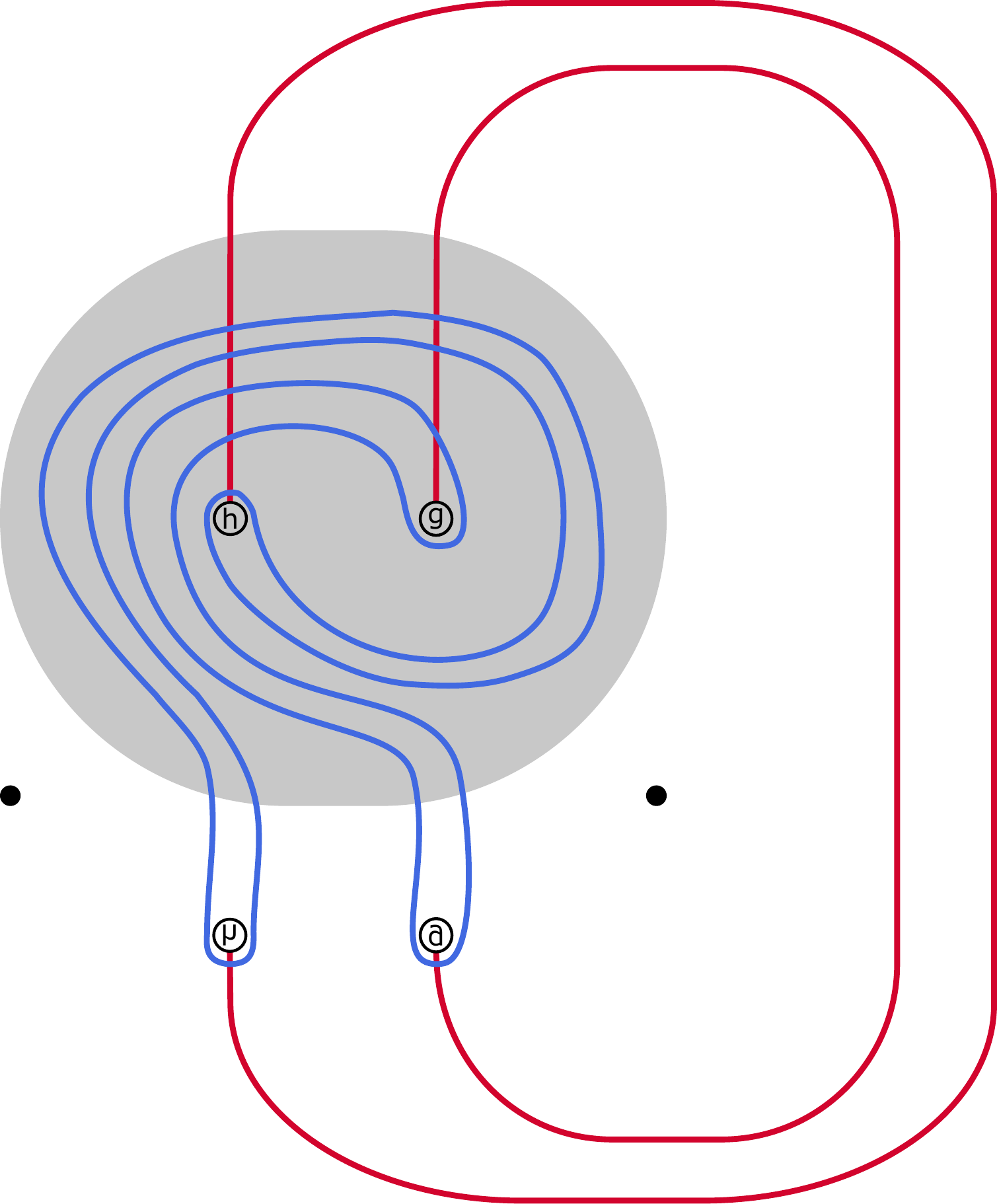}
		}
		\hspace{.75cm}
		\subcaptionbox{The diagram $\mathcal{H}(\rho, \{2\}, \{2\})$ \label{fig:hdb}}{
			\labellist
			\small\hair 2pt
			\pinlabel $z_1$ at 200 442
			\endlabellist
			
			\includegraphics[height=7cm]{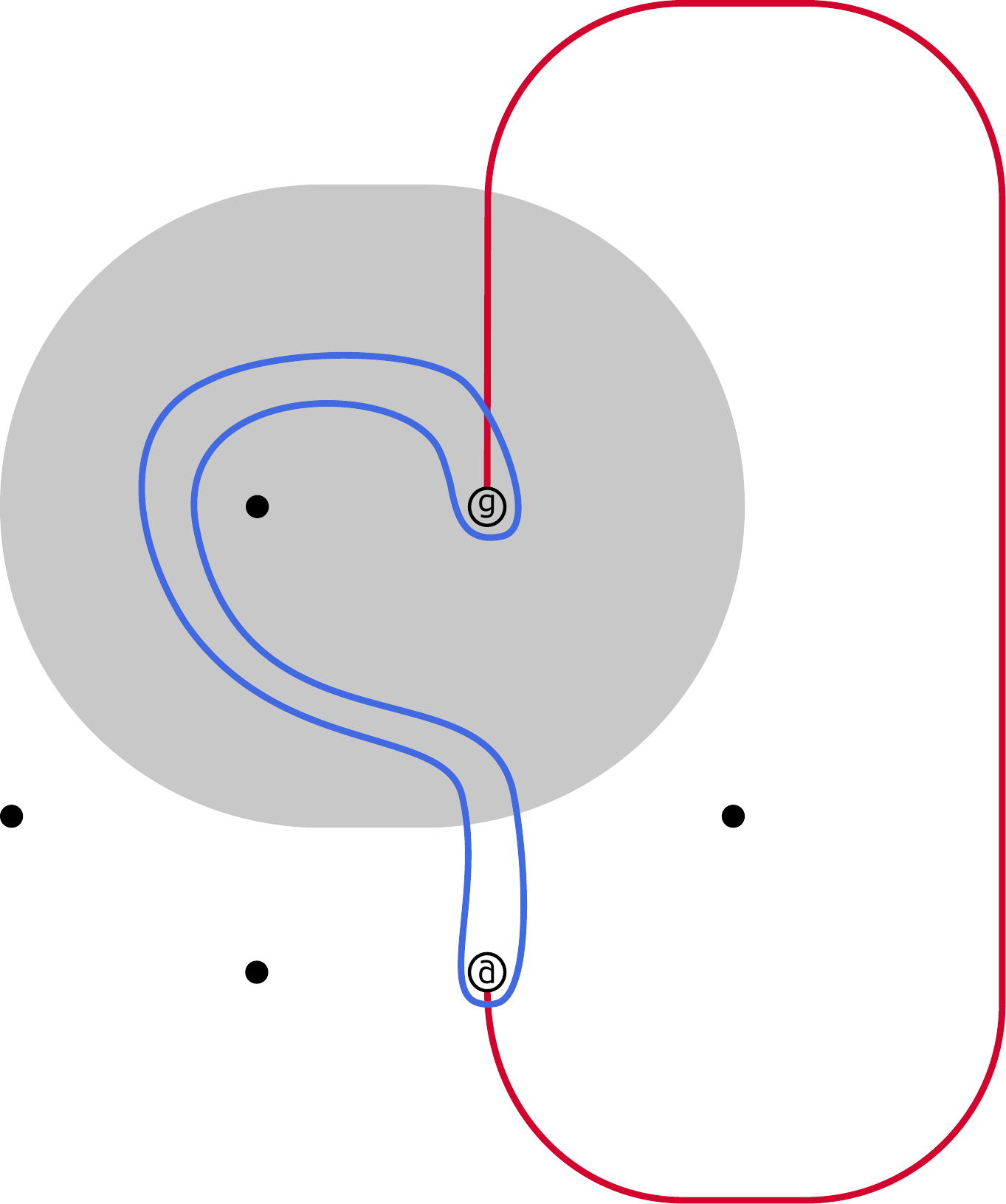}
			\vspace{.5cm}
		}
		\caption{Two Heegaard diagrams for the braid $\rho = \sigma_1^2 \in B_2$. The labelled circles indicate handle attachments (and compare Figure \ref{fig:bridge_diag}).}
		\label{fig:heediags}
	\end{figure}
	
	\begin{defn}
		\label{def:grps}
		Given a braid $\rho \in B_n$ and multi-indices ${\bf j}$ and ${\bf k}$ as above, define a chain complex by
		$$
		\widehat{CFB}(\rho, {\bf j}, {\bf k}) = \widehat{CF}(\mathcal{H}(\rho, {\bf j}, {\bf k})),
		$$
		and the resulting homology groups by
		$$
		\widehat{HFB}(\rho, {\bf j}, {\bf k}) = \widehat{HF}(\mathcal{H}(\rho, {\bf j}, {\bf k})),
		$$
		where $\mathcal{H}(\rho, {\bf j}, {\bf k})$ is the multi-pointed Heegaard diagram of Definition \ref{def:heediag}.
	\end{defn}
	
	\subsection{Gradings, and the Euler characteristic}
	\label{sec:gradings}
	
	Since all Heegaard diagrams constructed in the preceeding section are (multi-pointed) diagrams for $S^3$, the Floer homology groups of Definition \ref{def:grps} admit a relative Maslov $\Z$-grading. Following the convention of knot Floer homology, we define the Maslov grading using the Maslov index, the $w$ basepoints, and the $p_R$ basepoint, as in \cite[Section 2]{mos09}. We omit a precise definition, but we emphasize that Lemma \ref{lem:maslov} holds in this context. Thus, the unfamiliar reader may consider the Maslov grading as a relative $\Z/2$-grading which depends only on orientations.
	
	The groups  of Definition \ref{def:grps} also admit relative Alexander gradings, which we now define. We will not use a separate grading for each link component; as in knot Floer theory, we use fewer gradings to obtain the desired Euler characteristic.
	
	Let $\rho \in B_n$ be a braid, $K \subset S^3$ its closure, and ${\bf j}, {\bf k} \subset [n]$ multi-indices of size $m$. Then we have the Heegaard diagram $\mathcal{H}(\rho, {\bf j}, {\bf k})$ of Definition \ref{def:heediag}, and we let $U \subset S^3 \setminus m(K)$ be the braid axis as in Definition \ref{def:meridian}. Following Convention \ref{conv:embedded}, we think of the Heegaard surface $\Sigma$ as embedded in $S^3$, with $\Sigma \cap m(K)$ as in (\ref{eq:conv}) and $\Sigma \cap U = \{p_L, p_R\}$.
	
	Let 
	$$
	h_1 : H_1(S^3 \setminus m(K)) \to \Z
	$$
	be the homomorphism which sends each positively oriented meridian of a component of $m(K)$ to $1$. Similarly, let
	$$
	h_2 : H_1(S^3 \setminus U) \to \Z
	$$
	be the homomorphism that sends a meridian of $U$ to $1$. As in (\ref{eq:diff_curve}), given two generators $x$ and $y$ of $\widehat{CFB}(\rho, {\bf j}, {\bf k})$, let $\gamma_{x,y} \subset \Sigma$ be a corresponding difference curve. Then we define two relative Alexander gradings $\widetilde{A}_1, \widetilde{A}_2$ on $\widehat{CFB}(\rho, {\bf j}, {\bf k})$ by
	\begin{equation}
		\label{eq:hom_hom}
		\widetilde{A}_1(x) - \widetilde{A}_1(y) = h_1([\gamma_{x,y}])
	\end{equation}
	and
	\begin{equation}
		\label{eq:mer_mer}
		\widetilde{A}_2(x) - \widetilde{A}_2(y) = h_2([\gamma_{x,y}]).
	\end{equation}
	Since each curve in $\alpha$ and $\beta$ is null-homologous in $S^3 \setminus K$ and $S^3 \setminus U$, the difference classes $[\gamma_{x,y}]$ in $H_1(S^3 \setminus K)$ and $H_1(S^3 \setminus U)$ do not depend on the choice of curve. Furthermore, by the proof of Lemma \ref{lem:alex_disc}, the gradings $\widetilde{A}_1$ and $\widetilde{A}_2$ obstruct differentials of $\widehat{CFB}(\rho, {\bf j}, {\bf k})$, and descend to relative gradings on $\widehat{HFB}(\rho, {\bf j}, {\bf k})$.
	
	The relative grading $\widetilde{A}_1$ can be canonically lifted to an absolute grading $A_1$ on $\widehat{CFB}(\rho, {\bf j}, {\bf k})$. To accomplish this, we first relate the Heegaard diagram $\mathcal{H}(\rho, {\bf j}, {\bf k})$ with the noodles and chopsticks appearing in Section \ref{sec:fn}. Let ${\bf D}_n$ denote the punctured disk $D \setminus z$, and let
	$$
	\mathring{\bf D}_n = {\bf D}_n \setminus \big(N(\partial D) \cup B_\epsilon(z_1) \cup \dots \cup B_\epsilon(z_n) \big),
	$$
	where $N(\partial D)$ is a collar neighborhood of $\partial D$, and $B_\epsilon(z_j)$ is a small open ball centered at the basepoint $z_j \in z$. In Section \ref{sec:diags}, we built the Heegaard surface $\Sigma$ of $\mathcal{H}(\rho, {\bf j}, {\bf k})$ by attaching handles to $\widehat{F} = \C \cup \{\infty\}$. Since the only regions of $D \subset \C$ removed for the handle attachments were the open balls $B_\epsilon(z_j)$ for certain $j$, we have a well-defined inclusion $\mathring{\bf D}_n \hookrightarrow \Sigma$.
	
	\begin{conv}
		\label{con:coords}
		We identify $\mathring{\bf D}_n$ with a subset of $\Sigma$ via the above inclusion map.
	\end{conv}
	
	Fix indices $j \in {\bf j}$ and $k \in {\bf k}$, and let $\eta_j, \varepsilon_k \subset {\bf D}_n$ be the $j$th standard noodle and $k$th standard chopstick of Definition \ref{def:fn}. Let $\rho(\eta_j) \subset {\bf D}_n$ be the image of $\eta_j$ under $\rho : {\bf D}_n \to {\bf D}_n$. Isotoping $\rho$ if necessary, we assume all intersection points of $\rho(\eta_j) \cap \varepsilon_k$ are contained in $\mathring{\bf D}_n$.
	
	\begin{lemma}
		\label{lem:relating}
		By applying isotopies to the arcs $\rho(\eta_j)$ and $\varepsilon_k$ within ${\bf D}_n$ if necessary, we can assume that
		\begin{align}
		\label{eq:rel_one}
		\rho(\eta_j) \cap \mathring{\bf D}_n &= \beta_j \cap \mathring{\bf D}_n \text{ and} \\
		\label{eq:rel_two}
		\varepsilon_k \cap \mathring{\bf D}_n &= \alpha_k \cap \mathring{\bf D}_n,
 		\end{align}
 		where $\beta_j \subset \Sigma$ and $\alpha_k \subset \Sigma$ are the relevant curves in $\mathcal{H}(\rho, {\bf j}, {\bf k})$.
	\end{lemma}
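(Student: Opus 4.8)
I would prove the two identities by comparing the relevant arcs directly inside the punctured disk, handling the chopstick/$\alpha$ statement (\ref{eq:rel_two}) first and then the noodle/$\beta$ statement (\ref{eq:rel_one}), which carries the geometric content.

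For (\ref{eq:rel_two}): the underbridge $u_k$ was defined to leave $z_k$ running straight up to $\partial D$, so $u_k \cap D$ is exactly the cut arc $\ell_k$ of \Cref{def:fn}. The curve $\alpha_k$ is built from $u_k$ by truncating it inside $B_\epsilon(z_k) \cup B_\epsilon(w_k)$ and rerouting through a handle glued along $\partial B_\epsilon(z_k)$ and $\partial B_\epsilon(w_k)$; since those balls and the handle lie outside $\mathring{\bf D}_n$, we get $\alpha_k \cap \mathring{\bf D}_n = \ell_k \cap \mathring{\bf D}_n$. The chopstick $\varepsilon_k$ is a parallel push-off of $\ell_k$ that differs from it only through the small leftward nudge near $z_k$ (present only to enforce $\varepsilon_k \cap \ell_k = \{z_k\}$), and $z_k \notin \mathring{\bf D}_n$, so a small isotopy of $\varepsilon_k$ inside ${\bf D}_n$, supported in $\mathring{\bf D}_n$, carries $\varepsilon_k \cap \mathring{\bf D}_n$ onto $\ell_k \cap \mathring{\bf D}_n = \alpha_k \cap \mathring{\bf D}_n$, which is (\ref{eq:rel_two}).

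For (\ref{eq:rel_one}) I would first reduce to the case $\rho = \mathrm{id}$. Choosing a representative homeomorphism for $\rho$ that fixes a collar of $\partial D$ pointwise and is standard near each puncture (so that $\rho$ permutes the balls $B_\epsilon(z_i)$ and hence $\rho(\mathring{\bf D}_n) = \mathring{\bf D}_n$), and using that a regular neighborhood of $o_j = \rho(\bar o_j)$ may be taken to be $\rho(N(\bar o_j))$, we may assume $\beta_j = \rho(\beta_j^{0})$ with $\beta_j^{0} := \partial N(\bar o_j)$. It then suffices to isotope $\eta_j$ inside ${\bf D}_n$ so that $\eta_j \cap \mathring{\bf D}_n = \beta_j^{0} \cap \mathring{\bf D}_n$, and then apply $\rho$, since $\rho^{-1}(\mathring{\bf D}_n) = \mathring{\bf D}_n$. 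Now $\bar o_j \cap D$ is the embedded arc descending from $z_j$ to the point of $\partial D$ directly below it, meeting no other puncture, while the standard noodle may be written $\eta_j = \partial N(\delta_j)$ for an embedded spoke $\delta_j$ from $x = -i$ to $z_j$ that also meets no other puncture. By the conventions of \Cref{sec:ub} and \Cref{sec:bridges}, neither $\delta_j$ nor $\bar o_j \cap D$ winds about any $z_i$ with $i \neq j$ (cf.\ \Cref{fig:fna} and \Cref{fig:bridge_diag}), so $\delta_j$ and $\bar o_j \cap D$ are isotopic in $D$ through arcs disjoint from $z \setminus \{z_j\}$; the induced isotopy of $\eta_j = \partial N(\delta_j)$ is supported near $\partial D$ and $z_j$ and carries $\eta_j \cap \mathring{\bf D}_n$---a pair of parallel strands from the inner collar boundary to $\partial B_\epsilon(z_j)$---onto $\beta_j^{0} \cap \mathring{\bf D}_n$. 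Pushing forward by $\rho$ gives $\rho(\eta_j) \cap \mathring{\bf D}_n = \rho(\beta_j^{0}) \cap \mathring{\bf D}_n = \beta_j \cap \mathring{\bf D}_n$.

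The main obstacle will be the isotopy-class comparison in the previous paragraph: making precise how the standard noodle sits relative to the punctures $z_1, \dots, z_n$ and confirming that, after applying $\rho$, it lines up with the downward segment $\bar o_j$ used to build the overbridge $o_j$. I would settle this by fixing once and for all a model picture of $\eta_j$, as in \Cref{fig:fna}, and checking there that $\eta_j$ and $\beta_j^{0}$ are boundaries of regular neighborhoods of arcs in a common isotopy class of embedded arcs in $D$ avoiding $z \setminus \{z_j\}$; the only bookkeeping beyond the pictures is the puncture permutation $\rho$ induces. Finally, for the applications in later sections one wants the isotopies above to keep every point of $\rho(\eta_j) \cap \varepsilon_k$ inside $\mathring{\bf D}_n$; this follows from (\ref{eq:rel_one}) and (\ref{eq:rel_two}) once the isotopies near $\partial D$ are chosen to keep $\rho(\eta_j)$ and $\varepsilon_k$ disjoint there, giving $\rho(\eta_j) \cap \varepsilon_k = \beta_j \cap \alpha_k$ as subsets of $\mathring{\bf D}_n \subset \Sigma$.
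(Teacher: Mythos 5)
Your proof is correct and follows essentially the same strategy as the paper's: the paper verifies (\ref{eq:rel_one})--(\ref{eq:rel_two}) for the identity braid by inspecting \Cref{fig:cc}, observes that $\alpha_k$ and $\varepsilon_k$ are independent of $\rho$ so (\ref{eq:rel_two}) holds in general, and then obtains (\ref{eq:rel_one}) by applying $\rho$ to the identity-braid identity $\eta_j \cap \mathring{\bf D}_n = \beta^{\bf 1}_j \cap \mathring{\bf D}_n$ together with $\beta_j = \rho(\beta^{\bf 1}_j)$. You unpack the figure-level verification of the identity case into an explicit isotopy argument, but the overall reduction and push-forward structure is identical.
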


	\begin{proof}
		As Figure \ref{fig:cc} shows, (\ref{eq:rel_one}) and (\ref{eq:rel_two}) hold in the special case that $\rho$ is the identity braid. Since neither $\varepsilon_k$ nor $\alpha_k$ depends on $\rho$, (\ref{eq:rel_two}) always holds. Furthermore, let $\beta^{\bf 1}_j$ denote the curve $\beta_j$ in the case of the identity braid, and fix another braid $\rho$. Then we may assume that
		$$
		\eta_j \cap \mathring{\bf D}_n = \beta^{\bf 1}_j \cap \mathring{\bf D}_n,
		$$
		and by construction $\beta_j = \rho(\beta^{\bf 1}_j)$. It follows that
		$$
		\rho(\eta_j) \cap \mathring{\bf D}_n = \rho(\beta^{\bf 1}_j) \cap \mathring{\bf D}_n = \beta_j \cap \mathring{\bf D}_n,
		$$
		as desired.
	\end{proof}
	
	\begin{figure}
		\labellist
		\small\hair 2pt
		\pinlabel $\beta_1$ at 120 280
		\pinlabel $\beta_2$ at 370 280
		\pinlabel $\alpha_1$ at 760 420
		\pinlabel $\alpha_2$ at 620 420
		\pinlabel $\eta_1$ at -25 400
		\pinlabel $\eta_2$ at -25 345
		\pinlabel $\varepsilon_1$ at 130 600
		\pinlabel $\varepsilon_2$ at 280 600
		\endlabellist
	
		\includegraphics[height=8cm]{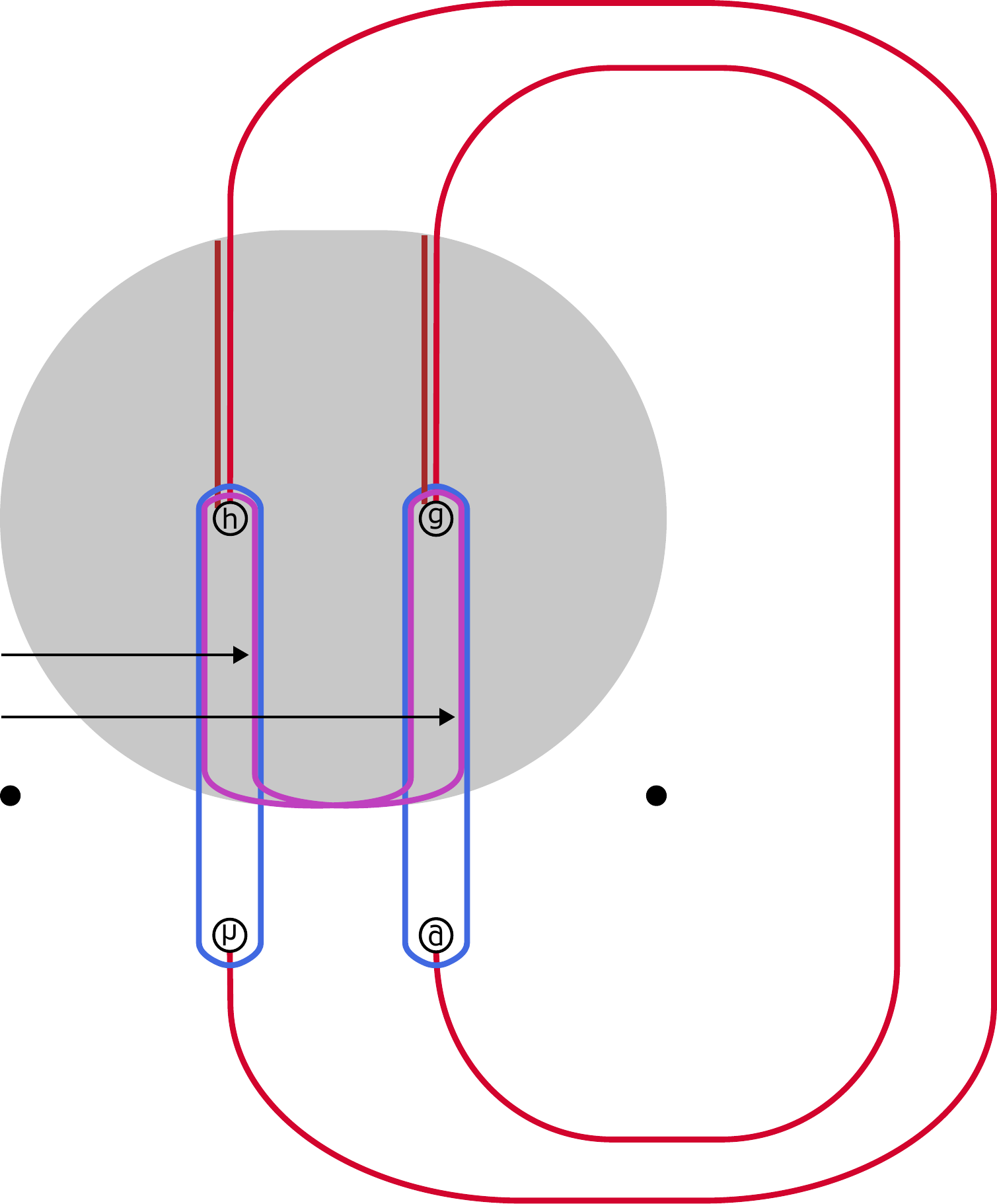}
		\caption{Comparing chopsticks and noodles with $\alpha$ and $\beta$ curves}
		\label{fig:cc}
	\end{figure}
	
	Next, we use Lemma \ref{lem:relating} to define a local grading function
	$$
	A_1^\text{loc} : \{x \in \beta_j \cap \alpha_k \mid j,k \in [n]\} \to \Z.
	$$
	Fix indices $j \in {\bf j}$ and $k \in {\bf k}$, and a point $x \in \beta_j \cap \alpha_k$. Then exactly one of the following holds:
	\begin{enumerate}[label=(\roman*)]
		\item We have $j = k$, and $x$ is the unique intersection point of $\beta_j \cap \alpha_j$ which occurs in $\C \subset \widehat{F}$ directly below where the basepoint $w_j$ has been surgered out.
		\item The intersection point $x$ occurs in $\mathring{\bf D}_n$.
	\end{enumerate}
	This is certainly true if $\rho$ is the identity braid, so holds for all braids since $\rho$ fixes $\C \setminus D$. Case (i) will be particularly important to us, so we name it for future reference.
	
	\begin{defn}
		\label{def:anchor}
		If the multi-indices ${\bf j}, {\bf k} \subset [n]$ contain a common index $j$, then there is a unique intersection point $x \in \beta_j \cap \alpha_j$ occurring in $\Sigma$ near where the basepoint $w_j$ has been removed. We call $x$ the {\em $j$th anchor point}.
	\end{defn}
	
	In Figures \ref{fig:heediags} and \ref{fig:cc}, the anchors points are those intersection points not contained in the grey unit disk. If $x$ is an anchor point, then we set
	$$
	A_1^\text{loc}(x) = 0.
	$$
	Otherwise, if $x \in \beta_j \cap \alpha_k$ and case (ii) holds, then by Lemma \ref{lem:relating}, $x$ can be identified with an intersection point $x' \in \rho(\eta_j) \cap \varepsilon_k$. In this case, we define
	$$
	A_1^\text{loc}(x) = S(x'),
	$$
	where $S$ is the sheet function of Definition \ref{def:sheet}. Having specified $A_1^\text{loc}$ for all intersection points, we define $A_1$.
	
	\begin{defn}
		\label{def:grading}
		Given a generator $x = \{x_1, \dots, x_m\} \in \mathbb{T}_\alpha \cap \mathbb{T}_\beta$ of $\widehat{CFB}(\rho, {\bf j}, {\bf k})$, define an absolute grading $A_1 : \widehat{CFB}(\rho, {\bf j}, {\bf k}) \to \Z$ by
		$$
		A_1(x) = \sum_{j = 1}^m A_1^\text{loc}(x_j).
		$$
	\end{defn}

	\begin{lemma}
		\label{lem:grading}
		Let $x = \{x_1, \dots, x_m\}$ and $y = \{y_1, \dots, y_m\}$ be two generators of $\widehat{CFB}(\rho, {\bf j}, {\bf k})$, and let $\gamma_{x,y}$ be a difference curve for $x$ and $y$. Then the function $A_1$ satisfies
		\begin{equation}
			\label{eq:diff}
			A_1(x) - A_1(y) = h_1([\gamma_{x,y}]).
		\end{equation}
		Equivalently, $A_1$ lifts $\widetilde{A}_1$ to an absolute grading on $\widehat{CFB}(\rho, {\bf j}, {\bf k})$.
	\end{lemma}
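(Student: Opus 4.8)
The plan is to compute $A_1(x)-A_1(y)$ directly from \Cref{def:sheet,def:grading} and \Cref{lem:s_intersect}, rewrite it as a signed intersection count of a difference curve with the cut arcs $\ell_1,\dots,\ell_n$, and then identify that count with $h_1([\gamma_{x,y}])$. First I would choose the difference curve $\gamma_{x,y}=a_1\cup b_1\cup\cdots\cup a_m\cup b_m$ of \eqref{eq:diff_curve} so that each $a_i\subset\alpha_{k_i}$ and $b_i\subset\beta_{j_i}$ lies inside $\mathring{\bf D}_n$ whenever its two endpoints do; this is possible because, by \Cref{lem:relating} and the geometry of the standard noodles and chopsticks, $\alpha_k\cap\mathring{\bf D}_n$ and $\beta_j\cap\mathring{\bf D}_n$ are each single arcs, so any two of their points lying in $\mathring{\bf D}_n$ are joined within $\mathring{\bf D}_n$. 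By the dichotomy preceding \Cref{def:anchor}, the only points of a generator that fail to lie in $\mathring{\bf D}_n$ are anchor points, so assume for the moment that neither $x$ nor $y$ contains one.

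Since each of the $m$ curves $\alpha_{k_i}$ carries exactly one point $x^{(i)}$ of $x$ and one point $y^{(i)}$ of $y$, \Cref{def:grading} gives $A_1(x)-A_1(y)=\sum_{i=1}^m\bigl(A_1^{\text{loc}}(x^{(i)})-A_1^{\text{loc}}(y^{(i)})\bigr)$. By the definition of the local grading function on points of $\mathring{\bf D}_n$, each summand equals $S(x_i')-S(y_i')$ for the corresponding intersection points of noodles and chopsticks, and \Cref{lem:s_intersect} rewrites this as $\sum_{r=1}^n(\gamma_{x_i'}-\gamma_{y_i'})\cdot\ell_r$, where $\gamma_{x_i'}$ is the concatenation of the initial segment of $\rho(\eta_j)$ ending at $x_i'$ with the terminal segment of $\varepsilon_{k_i}$ running from $x_i'$ to $\partial D$. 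Summing over $i$, the noodle segments range over one segment of $\rho(\eta_j)$ for each $j\in{\bf j}$ and the chopstick segments over one segment of $\varepsilon_k$ for each $k\in{\bf k}$; subtracting the $y$-contribution, the shared endpoints of the noodle segments (the common noodle basepoint) and of the chopstick segments (the fixed endpoint of each $\varepsilon_k$ on $\partial D$) cancel, so $\sum_i(\gamma_{x_i'}-\gamma_{y_i'})$ is a $1$-cycle. Its surviving pieces are, via \Cref{lem:relating} and the orientation conventions of \eqref{eq:diff_curve}, exactly the arcs $b_j\subset\beta_j$ ($j\in{\bf j}$) and $a_k\subset\alpha_k$ ($k\in{\bf k}$) — even when the $x$- and $y$-points on a given $\beta_j$ sit on different $\alpha$-curves, their initial-segment difference is still the arc of $\rho(\eta_j)=\beta_j$ between them. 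Hence $\sum_i(\gamma_{x_i'}-\gamma_{y_i'})=\gamma_{x,y}$ and
\[
A_1(x)-A_1(y)=\sum_{r=1}^n\gamma_{x,y}\cdot\ell_r .
\]

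It then remains to identify $\sum_r\gamma_{x,y}\cdot\ell_r$ with $h_1([\gamma_{x,y}])$. Since $\gamma_{x,y}$ was arranged to be a cycle in $\mathring{\bf D}_n\subset D\setminus z$, and the cut arcs $\ell_1,\dots,\ell_n$ together with a collar of $\partial D$ cut $D\setminus z$ into a disk, $[\gamma_{x,y}]=\sum_r(\gamma_{x,y}\cdot\ell_r)[\mu_r]$ in $H_1(D\setminus z)$, where $\mu_r$ is the suitably oriented meridian of $z_r$. Under \Cref{conv:embedded} each $z_r$ is a positive transverse intersection point of $m(K)$ with $\widehat F$, so the image of $\mu_r$ in $H_1(S^3\setminus m(K))$ is a positively oriented meridian, on which $h_1$ is $1$. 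Therefore $h_1([\gamma_{x,y}])=\sum_r\gamma_{x,y}\cdot\ell_r=A_1(x)-A_1(y)$, which is \eqref{eq:diff}; the equivalence with ``$A_1$ lifts $\widetilde{A}_1$'' is then immediate from \eqref{eq:hom_hom}. Throughout, one orients the noodle and chopstick sub-arcs against the convention of \eqref{eq:diff_curve} so that every meridian sign comes out $+1$; this is exactly the bookkeeping the mirroring built into $\mathcal{B}(\rho)$ was arranged to make consistent.

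The main obstacle is the case where $x$ or $y$ contains a $j$th anchor point, which lies near the handle that replaced $w_j$ rather than in $\mathring{\bf D}_n$. There I would first reduce to the anchor-free case using the additivity of both $A_1(x)-A_1(y)$ and $h_1([\gamma_{x,y}])$ under concatenation of difference curves, and then analyze the transition across a single anchor point by a local computation in a neighborhood of the $w_j$-handle. The point to verify is that such an anchor point contributes $0$ to $A_1$ by definition, while the accompanying excursions of $\gamma_{x,y}$ along $\alpha_j$ and $\beta_j$ near that handle contribute nothing to either side: nothing to $\sum_r\gamma_{x,y}\cdot\ell_r$ because every cut arc is localized near the punctures $z_r$, and nothing to $h_1([\gamma_{x,y}])$ because $\alpha_j$ and $\beta_j$ bound disks in the two handlebody complements of $m(K)$ and so are nullhomologous in $S^3\setminus m(K)$, letting the excursions be absorbed. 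Making this local analysis and the orientation conventions precise is the only delicate part of the proof; the rest is the formal manipulation above.
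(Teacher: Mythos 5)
Your anchor-free argument is essentially correct and follows the same core strategy as the paper, though with different bookkeeping. You expand $\sum_i(S(x_i')-S(y_i'))$ via the arcs $\gamma_{x_i'}$ of \Cref{lem:s_intersect} and observe the endpoint cancellations reassemble into $\gamma_{x,y}$; the paper instead constructs the difference curve $\gamma_{x,y}$ up front and computes $\sum_r b_j\cdot \ell_r$ per $\beta$-arc. Your route to $h_1([\gamma_{x,y}])=\sum_r\gamma_{x,y}\cdot\ell_r$ (exploiting that the chosen $\gamma_{x,y}$ is a cycle in $\mathring{\bf D}_n$ and that the cut arcs slice $D\setminus z$ into a disk) is also fine and somewhat more direct than the paper's, which introduces parallel arcs $\bar\ell_r$ from $z_r$ to $w_r$ and verifies $h_1=\sum_r\cdot\,\bar\ell_r$ on $H_1(F)$; both identifications are valid since the curves coincide inside $D$.

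The anchor-point case, however, is a genuine gap as written. The proposed reduction ``by additivity under concatenation of difference curves'' presupposes a chain of intermediate generators interpolating between $x$ and $y$ one anchor at a time, and these need not exist: if the braid forces $\alpha_j\cap\beta_j$ to consist of nothing but the anchor point, there is no generator differing from $x$ only there. The paper sidesteps this with a direct computation: it fixes the $b_s$ arc at an anchor (the sub-arc of $\beta_s$ ``to the left of the anchor point''), then checks in four explicit cases that $\sum_r b_j\cdot\ell_r$ equals $A_1^{\text{loc}}(x_j)-A_1^{\text{loc}}(y_j)$, including the mixed cases where exactly one of $x_j,y_j$ is an anchor. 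Your statement that the anchor ``excursion contributes nothing to either side'' is misleading: when only $y_j$ is an anchor, the arc $b_j$ contributes $S(x_j)$, not zero, to $\sum_r\gamma_{x,y}\cdot\ell_r$; what is true is that only the portion of $b_j$ outside $D$ contributes nothing, and the remaining portion must be accounted for — exactly the computation the paper records. (Your phrasing ``every cut arc is localized near the punctures $z_r$'' is also off; the $\ell_r$ run all the way to $\partial D$, and the relevant fact is only that they lie in $D$, away from the region near $w_j$.) Finally, your own chain-level computation actually reproduces the orientation convention of (\ref{eq:diff_curve}) — the chopstick differences give $a$-arcs oriented $x\to y$ and the noodle differences give $b$-arcs oriented $y\to x$ — so the flagged need to ``orient against the convention'' is worth rechecking rather than assuming.
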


	\begin{proof}
		Let $x$ and $y$ be as above. Since each curve in $\alpha$ and $\beta$ is null-homologous in $S^3 \setminus K$, it suffices to check (\ref{eq:diff}) for any choice of difference curve $\gamma_{x,y}$. We will choose sub-arcs $a_1, \dots, a_m$ of the $\alpha$ curves and sub-arcs $b_1, \dots, b_m$ of the $\beta$ curves, as in (\ref{eq:diff_curve}), to construct a particular difference curve
		$$
		\gamma_{x,y} = a_1 \cup b_1 \cup \dots \cup a_m \cup b_m.
		$$
		
		Suppose $x_j$ and $y_k$ are respective elements of $x$ and $y$ which are contained in the same curve $\alpha_s \in \alpha$. Then we must choose an arc $a_s \subset \alpha_s$ with $\partial a_s = \{x_j, y_k\}$, and there are two possible choices: one which passes through the handle of $\Sigma$ attached at $w_s$ and $z_s$, and one which does not. For all such $\alpha_s \in \alpha$, we choose $a_s \subset \alpha_s$ to be the sub-arc of $\alpha_s$ which avoids the handle attachment.
		
		Likewise, if $x_j$ and $y_k$ are both contained in the curve $\beta_s$, then we must choose an arc $b_s \subset \beta_s$ with $\partial b_s = \{x_j, y_k\}$, and again we have two choices. We make the choice as follows. If neither $x_j$ nor $y_k$ is an anchor point, then both points are contained in $\mathring{\bf D}_n \subset \Sigma$, and we choose the arc $b_s \subset \beta_s$ which is also contained in $\mathring{\bf D}_n$. (Such an arc exists since $ \beta_s \setminus \mathring{\bf D}_n$ is connected, so $\beta_s \cap \mathring{\bf D}_n$ is connected as well.) If either $x_j$ or $y_k$ is an anchor point, then we let $b_s \subset \beta_s$ be the arc which occupies the space immediately to the left of the anchor point on $\beta_s$, with our notion of ``left'' coming from the coordinates on $\C$ used in Section \ref{sec:bridges}. (If $x_j = y_k$, then $b_s$ is just the anochor point $\{x_j\}$.) Having chosen all necessary arcs, let $\gamma_{x,y}$ be the resulting difference curve. We must show $h_1(\gamma_{x,y})$ satisfies (\ref{eq:diff}).
		
		Since the $a_s$ sub-arcs of $\gamma_{x,y}$ were chosen to avoid handle attachments, and the $\beta$ curves always avoid handles, the curve $\gamma_{x,y}$ lies in punctured sphere $F \subset S^3$ used to construct $\Sigma$ (see Convention \ref{conv:embedded}). For each puncture $p \in z \cup w$ of $F$, let $\delta(p) \subset F$ be the closed curve
		$$
		\delta(p) = \partial B_\epsilon(p),
		$$
		oriented to be homologous to a meridian of $K$ in $S^3$. Then the curves $\delta(p)$ span $H_1(F)$, and the homomorphism $h_1 : H_1(S^3 \setminus K) \to \Z$ of (\ref{eq:hom_hom}), when restricted to $F$, is determined by 
		$$
		h_1(\delta(p)) = 1
		$$
		for all $p \in z \cup w$.
		
		For $k \in [n]$, let $\bar{\ell}_k \subset F$ be a properly embedded, oriented arc which travels parallel to and to the right of the underbridge $u_k$, going from $z_k$ to $w_k$. As in the proof of Lemma \ref{lem:relating}, we can assume that
		$$
		\bar{\ell}_k \cap \mathring{\bf D}_n = \ell_k \cap \mathring{\bf D}_n
		$$
		for all $k \in [n]$, where $\ell_k \subset {\bf D}_n$ is the $k$th cut arc of Definition \ref{def:fn}. Additionally, it's straightforward to check that
		\begin{equation}
			\label{eq:hone}
			h_1(e) = \sum_{k = 1}^n e \cdot \bar{\ell}_k
		\end{equation}
		for any class $e \in H_1(F)$, for example by checking (\ref{eq:hone}) on the spanning set $\delta(p), p \in z \cup w$. Thus, the proof is completed by showing that
		$$
		A_1(x) - A_1(y) = \sum_{r = 1}^n \gamma_{x,y} \cdot \bar{\ell}_r.
		$$
		
		Since $\alpha_k \cap F = u_k \cap F$ for all $k \in {\bf k}$, and $a_k \subset \alpha_k$, we have $a_k \cap \bar{\ell}_r = \varnothing$ for all $k \in {\bf k}$ and $r \in [n]$. Additionally, by our choice of arcs $b_j \subset \beta_j$ in the construction of $\gamma_{x,y}$, all intersections $b_j \cap \bar{\ell}_r$ occur within $\mathring{\bf D}_n$, so $b_j \cdot \bar{\ell}_r = b_j \cdot \ell_r$ for all $j \in {\bf j}$ and $r \in [n]$. Therefore,
		$$
		h_1([\gamma_{x,y}]) = \sum_{r = 1}^n \gamma_{x,y} \cdot \bar{\ell}_r = \sum_{j \in {\bf j}}\sum_{r = 1}^n b_j \cdot \bar{\ell}_r = \sum_{j \in {\bf j}} \sum_{r = 1}^n b_j \cdot \ell_r.
		$$
		Suppose, for notational simplicity, that the arc $b_j \subset \beta_j$ has boundary points $x_j \in x$ and $y_j \in y$ for all $j \in {\bf j}$. Then, by Lemma \ref{lem:s_intersect}, for all $j \in {\bf j}$,
		$$
		\sum_{r = 1}^n b_j \cdot \ell_r = \begin{cases}
			S(x_j) - S(y_j) \text{ if neither $x_j$ nor $y_j$ is an anchor point.} \\
			S(x_j) \text{ if only $y_j$ is an anchor point.} \\
			-S(y_j) \text{ if only $x_j$ is an anchor point.} \\
			0 \text{ if $x_j = y_j$ is an anchor point.}
		\end{cases}
		$$
		It follows from the definition of $A_1^\text{loc}$ that
		$$
		h_1([\gamma_{x,y}])  = \sum_{j \in {\bf j}} \sum_{r= 1}^n b_j \cdot \ell_r = \sum_{j \in {\bf j}} A_1^\text{loc}(x_j) - A_1^\text{loc}(y_j) = A_1(x) - A_1(y),
		$$
		completing the proof.
	\end{proof}
	
	We can also lift $\widetilde{A}_2$ to an absolute grading $A_2$ on $\widehat{CFB}(\rho, {\bf j}, {\bf k})$. This is easier than in the $A_1$ case.	
	\begin{defn}
		\label{def:grading_two}
		Given a generator $x = \{x_1, \dots, x_m\} \in \mathbb{T}_\alpha \cap \mathbb{T}_\beta$ of $\widehat{CFB}(\rho, {\bf j}, {\bf k})$, define an absolute grading $A_2 : \widehat{CFB}(\mathcal{H}(\rho, {\bf j}, {\bf k})) \to \Z$ by
		$$
		A_2(x) = |\{x_j \in x \mid x_j \text{ is an anchor point}\}|.
		$$
	\end{defn}
	
	The proof that $A_2$ lifts $\widetilde{A}_2$ is similar to Lemma \ref{lem:grading}.
	
	\begin{lemma}
		\label{lem:grading_two}
		Let $x = \{x_1, \dots, x_m\}$ and $y = \{y_1, \dots, y_m\}$ be two generators of $CF(\rho, {\bf j}, {\bf k})$, and let $\gamma_{x,y}$ be a difference curve for $x$ and $y$. Then the function $A_2$ defined above satisfies
		\begin{equation}
			\label{eq:diff_two}
			A_2(x) - A_2(y) = h_2([\gamma_{x,y}]).
		\end{equation}
		Equivalently, $A_2$ lifts the relative grading $\widetilde{A}_2$ of (\ref{eq:mer_mer}).
	\end{lemma}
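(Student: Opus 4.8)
The plan is to follow the template of the proof of \Cref{lem:grading}, which should go through more easily here because $A_2$ merely counts anchor points. First, since $U$ is an unknot, $H_1(S^3\setminus U)\cong\Z$, and every $\alpha$- and $\beta$-curve is null-homologous in $S^3\setminus U$; hence $h_2([\gamma_{x,y}])$ does not depend on the choice of difference curve, and it is enough to verify (\ref{eq:diff_two}) for one well-chosen $\gamma_{x,y}$.

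The key preliminary step is to realize $h_2$ as an intersection pairing on the Heegaard surface. I would take the disk $\Delta$ bounded by $U$ from \Cref{def:meridian}, make it transverse to $\Sigma$, and isotope it rel $U$ to remove the closed components of $\Delta\cap\Sigma$. Since $\partial\Delta=U$ meets $\Sigma$ only at $p_L$ and $p_R$, the intersection $\mu:=\Delta\cap\Sigma$ is then a single properly embedded arc in $\Sigma$ from $p_L$ to $p_R$, and for any $1$-cycle $e$ in $\Sigma\setminus\{p_L,p_R\}$ one has $h_2([e])=e\cdot\mu$ (with $\mu$ oriented so that a small meridian of $U$ pairs to $+1$ with it). I would then pin down the position of $\mu$: because $\Delta$ avoids the underbridges and the handle regions, $\mu$ avoids every $\alpha$-curve away from the handles; and because the braid axis threads once through a neighbourhood of each overbridge but is unlinked from each $\alpha_j$, we have $\alpha_j\cdot\mu=0$ and $\beta_j\cdot\mu=\pm1$ for every $j\in{\bf j}$. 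Isotoping $\mu$ within $\Sigma$ (and $\Delta$ within $S^3\setminus m(K)$), I would arrange that it meets each $\beta_j$ in a single point lying outside $\mathring{\bf D}_n$ and distinct from every anchor point, meets $\alpha_j$ (for $j\in{\bf j}\cap{\bf k}$) in exactly two points straddling the $j$th anchor point --- so that the anchor point is the only point of $\alpha_j\cap\bigcup_k\beta_k$ on the enclosed sub-arc of $\alpha_j$ --- and is disjoint from $\alpha_k$ for $k\in{\bf k}\setminus{\bf j}$.

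With $\mu$ in this position I would build a difference curve $\gamma_{x,y}=a_1\cup b_1\cup\dots\cup a_m\cup b_m$ as in (\ref{eq:diff_curve}), subject only to the requirement that each $b_s\subset\beta_s$ avoid the single point of $\mu\cap\beta_s$ (possible, since deleting that point leaves an arc still containing both endpoints of $b_s$) and leaving the $a_s$ otherwise arbitrary --- the value $a_s\cdot\mu$ depends only on the endpoints of $a_s$, because the two possible choices of $a_s$ differ by $[\alpha_s]$ and $\alpha_s\cdot\mu=0$. Then $b_s\cdot\mu=0$ for every $s$, while the straddling property forces $a_s\cdot\mu$ to be $+1$ when the endpoint of $a_s$ belonging to $x$ is an anchor point and the other is not, $-1$ when the endpoint belonging to $y$ is an anchor point, and $0$ in all remaining cases (in particular when both endpoints coincide at an anchor point, or when neither endpoint is an anchor point). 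Summing over $s$ gives
$$
h_2([\gamma_{x,y}])=\gamma_{x,y}\cdot\mu=\sum_{s=1}^m a_s\cdot\mu=A_2(x)-A_2(y),
$$
which is exactly (\ref{eq:diff_two}); the last equality is just the definition of $A_2$ in \Cref{def:grading_two}.

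The one genuinely geometric point --- and the step I expect to require the most care --- is verifying that $\mu$ can be isotoped into the stated position, i.e.\ that its two crossings with each $\alpha_j$ ($j\in{\bf j}\cap{\bf k}$) bracket precisely the anchor point and nothing else (the parity constraints $\alpha_j\cdot\mu=0$, $\beta_j\cdot\mu=\pm1$ alone do not force this). This is a local analysis near each handle attachment, tracking the braid axis $U$ and its spanning disk $\Delta$ through the surgeries used to build $\Sigma$; everything else is bookkeeping entirely analogous to \Cref{lem:grading}.
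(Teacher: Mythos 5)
There is a genuine gap in the positioning of $\mu$, specifically in the claim $\beta_j\cdot\mu=\pm1$. You infer this from the fact that the braid axis links each overbridge $o_j$ once, but what controls $\beta_j\cdot\mu$ is the linking number of $U$ with the \emph{closed curve} $\beta_j$, and that is $0$: by construction $\beta_j$ bounds a small disk in $\widehat{F}$ around $o_j$, and that disk is disjoint from $U$ (the two points $U\cap\widehat{F}=\{p_L,p_R\}$ lie outside it). Equivalently $[\beta_j]=0\in H_1(S^3\setminus U)$, so $h_2([\beta_j])=0$, and hence $\beta_j\cdot\mu=0$ for any arc $\mu$ realizing $h_2$. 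In particular $\mu$ meets each $\beta_j$ in an \emph{even} number of points, so the configuration you want --- $\mu\cap\beta_s$ a single point that $b_s$ can be routed around --- is impossible; after removing two or more points from $\beta_s$ the arc $b_s$ may be forced to cross $\mu$. The companion claim that $\mu$ meets $\alpha_j$ twice, straddling the anchor point, is likewise not realizable for a $\mu$ in the correct relative homology class, since the straight horizontal arc representing that class is disjoint from every $\alpha$-curve.

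The paper's proof runs in the opposite direction: the count is carried by the $\beta$-arcs, not the $\alpha$-arcs. The horizontal arc $\ell$ from $p_L$ to $p_R$ separates the anchor points (near the $w$-row) from $\mathring{\bf D}_n$; every $\alpha$-curve is disjoint from $\ell$ (the underbridges loop around it and the handles are attached away from it), while each $\beta_j$ crosses $\ell$ exactly twice with opposite signs, consistently with $\beta_j\cdot\ell=0$. Using the same $b_j$-arcs as in \Cref{lem:grading}, each $b_j$ either lies wholly in $\mathring{\bf D}_n$ or hugs one side of an anchor point, so it crosses $\ell$ exactly once precisely when one endpoint of $b_j$ is an anchor point and the other is not; summing gives $\gamma_{x,y}\cdot\ell=A_2(x)-A_2(y)$. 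Your instinct to realize $h_2$ as intersection with an arc between $p_L$ and $p_R$ is right, but the anchor points are detected along the $\beta$-curves (which run from one side of $\ell$ to the other), not along the $\alpha$-curves (which stay entirely on one side).
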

	
	\begin{proof}
		Given $x$ and $y$ as in the lemma, we choose the difference curve $\gamma_{x,y}$ by the same process as in the proof of Lemma \ref{lem:grading}. Then we have
		$$
		\gamma_{x,y} = a_1 \cup b_1 \cup \cdots \cup a_m \cup b_m,
		$$
		where $a_j \subset \alpha_j$ and $b_j \subset \beta_j$ for all $j$.
		
		Similar to the proof of Lemma \ref{lem:grading}, we consider the difference curve $\gamma_{x,y}$ as lying in the punctured sphere
		$$
		\widehat{F} \setminus \{p_L, p_R\} \subset (S^3 \setminus \mu).
		$$
		Let $\ell$ be a properly embedded, horizontal arc in $\C \setminus \{p_L, p_R\} \subset \widehat{F} \setminus \{p_L, p_R\}$ which travels right from $p_L$ to $p_R$. Then it is not difficult to show that
		$$
		h_2([\gamma_{x,y}]) = \gamma_{x,y} \cdot \ell,
		$$
		and it remains to verify that
		$$
		A_2(x) - A_2(y) = \gamma_{x,y} \cdot \ell.
		$$
		Now, all curves in $\alpha$ avoid the arc $\ell$, so
		$$
		\gamma_{x,y} \cdot \ell = (b_1 \cup \cdots \cup b_m) \cdot \ell.
		$$
		Furthermore, by our construction of $\gamma_{x,y}$, an arc $b_j$ intersects $\ell$---one time---if and only if one endpoint of $b_j$ is an anchor point and the other is not. Taking orientations into account, we have
		$$
		\gamma_{x,y} \cdot \ell = (\text{$\#$ of anchor points in $x$}) - (\text{$\#$ of anchor points in $y$}) = A_2(x) - A_2(y).
		$$
	\end{proof}
	
	\begin{rmk}
		\label{rmk:local_two}
		Like the grading $A_1$, the grading $A_2$ is induced by a local grading
		$$
		A_2^\text{loc} : \{x \in \beta_j \cap \alpha_k \mid j,k \in [n]\} \to \Z
		$$
		in the sense of Definition \ref{def:grading}. From Definition \ref{def:grading_two}, $A_2^\text{loc}$ is evidently defined by
		$$
		A_2^\text{loc}(x) = \begin{cases}
			1 & \text{$x$ is an anchor point} \\
			0 & \text{otherwise}
		\end{cases}.
		$$
	\end{rmk}
	
	Lemmas \ref{lem:grading} and \ref{lem:grading_two} allow us to easily compute the Euler characteristic of $\widehat{HFB}(\rho, {\bf j}, {\bf k})$. Recall that $A^{\bf jk} = (A^{\bf jk}_{jk})$ denotes the minor of $\psi_n(\rho) - \lambda I_n$ determined by the multi-indices ${\bf j}$ and ${\bf k}$. To simplify notation in what follows, we index the rows and columns of $A^{\bf jk}$ (possibly non-consecutively) by the same indices used in the full matrix $\psi_n(\rho) - \lambda I_n$.
	
	\begin{thm}
		Let $\rho$ and  $A^{\bf jk}$ be as above, and fix a choice of absolute Maslov grading. for $j,k,r \in \Z$, let $\widehat{HFB}_{j,k,r}(\rho, {\bf j}, {\bf k})$ denote the subgroup of $\widehat{HFB}(\rho, {\bf j}, {\bf k})$ supported in $A_1$ grading $j$, $A_2$ grading $k$, and Maslov grading $r$. Then, for an appropriate choice of absolute Maslov grading, we have
		$$
		\sum_{j,k,r \in \Z} (-1)^{j} t^k \lambda^r \text{rk}(\widehat{HFB}_{j,k,r}(\rho, {\bf j}, {\bf k})) = \det(A^{\bf jk}).
		$$
	\end{thm}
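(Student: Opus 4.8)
The plan is to compute the graded Euler characteristic at the chain level. Since the differential on $\widehat{CFB}(\rho,{\bf j},{\bf k})$ preserves the $A_1$- and $A_2$-gradings (by the proof of \Cref{lem:alex_disc} and the discussion following it) and shifts the Maslov grading by one, the graded Euler characteristic of $\widehat{HFB}(\rho,{\bf j},{\bf k})$ equals that of the complex $\widehat{CFB}(\rho,{\bf j},{\bf k})$. It therefore suffices to match the generators of $\widehat{CFB}(\rho,{\bf j},{\bf k})$, tracked by their three gradings, against the monomials in the Leibniz expansion $\det(A^{\bf jk}) = \sum_{\phi}\mathrm{sgn}(\phi)\prod_{k\in{\bf k}}A^{\bf jk}_{\phi(k),k}$.

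First I would set up the bijection between generators and determinant terms. Because the $\alpha$-curves of $\mathcal{H}(\rho,{\bf j},{\bf k})$ are indexed by ${\bf k}$ and the $\beta$-curves by ${\bf j}$, a generator $x=\{x_k\}_{k\in{\bf k}}\in\mathbb{T}_\alpha\cap\mathbb{T}_\beta$ is the data of a bijection $\phi:{\bf k}\to{\bf j}$ together with a point $x_k\in\alpha_k\cap\beta_{\phi(k)}$ for each $k$. By the dichotomy recorded just before \Cref{def:anchor}, each $x_k$ is either the $k$th anchor point (which forces $\phi(k)=k$) or lies in $\mathring{\bf D}_n$; in the latter case \Cref{lem:relating} identifies $x_k$ with a point of $\rho(\eta_{\phi(k)})\cap\varepsilon_k$. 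On the determinant side, by \Cref{lem:reform} the $(\phi(k),k)$ entry of $\psi_n(\rho)-\lambda I_n$ is $\sum_{y\in\rho(\eta_{\phi(k)})\cap\varepsilon_k}\mathrm{sgn}(y)\,t^{S(y)}$, with an extra summand $-\lambda$ precisely when $\phi(k)=k$. Expanding the product into monomials thus yields exactly one term for each choice of $\phi$ and, for each $k$, either a point $y_k\in\rho(\eta_{\phi(k)})\cap\varepsilon_k$ or (when $\phi(k)=k$) the factor $-\lambda$ --- i.e.\ exactly one term for each generator $x$.

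Next I would verify that the three gradings of $x$ record the three pieces of data carried by its determinant term. The $t$-exponent of the term is the sum of $S(y_k)$ over the non-$\lambda$ factors, which by \Cref{def:grading} and the definition of $A_1^{\mathrm{loc}}$ (anchor points contributing $0$, matching the fact that $-\lambda$ carries no $t$) is exactly $A_1(x)$. The $\lambda$-exponent of the term is the number of factors $-\lambda$, i.e.\ the number of anchor points among the $x_k$, which is $A_2(x)$ by \Cref{def:grading_two}. Finally, the sign of the term is $\mathrm{sgn}(\phi)\cdot(-1)^{(\#\ \mathrm{anchors})}\cdot\prod_{k\ \mathrm{non\mbox{-}anchor}}\mathrm{sgn}(y_k)$; orienting each $\alpha_k$ compatibly with $\varepsilon_k$ and each $\beta_j$ compatibly with $\rho(\eta_j)$, and using that all anchor points share a common intersection sign, this equals $\mathrm{sgn}(x)$ of \eqref{eq:orientation} up to a global sign and a possible global factor of $(-1)^{A_2(x)}$. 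By \Cref{lem:maslov}, $\mathrm{sgn}(x)=c\cdot(-1)^{M(x)}$ for a constant $c$ independent of $x$, so choosing the absolute Maslov grading to absorb $c$ (and, if the anchor sign forces it, replacing $\lambda$ by $-\lambda$, or otherwise folding the $(-1)^{A_2}$ discrepancy into the normalization) makes the sign of the determinant term equal $(-1)^{M(x)}$. Summing over generators then gives $\sum_x(-1)^{M(x)}t^{A_1(x)}\lambda^{A_2(x)}=\det(A^{\bf jk})$, which is the asserted identity after regrouping by graded pieces.

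The hard part will be the sign computation in the third step: reconciling the permutation sign $\mathrm{sgn}(\phi)$ and the local intersection signs from the noodle/chopstick picture with the Heegaard Floer sign of $x$ given by \eqref{eq:orientation} and \Cref{lem:maslov}, while keeping careful track of the anchor-point contribution and of the orientation conventions relating $\alpha_k,\beta_j$ to $\varepsilon_k,\rho(\eta_j)$. This is precisely where the ``appropriate choice of absolute Maslov grading'' in the statement gets pinned down; everything else is bookkeeping organized by \Cref{lem:relating}, \Cref{lem:reform}, and \Cref{lem:s_intersect}.
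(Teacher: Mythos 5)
Your approach is essentially the paper's: expand $\det(A^{\bf jk})$ by the Leibniz formula, apply \Cref{lem:reform} entrywise, identify chopstick/noodle intersection points (resp.\ Kronecker delta terms) with non-anchor (resp.\ anchor) coordinates of generators via the discussion preceding \Cref{def:anchor} and \Cref{lem:relating}, match the gradings via $A_1^{\mathrm{loc}}$, $A_2^{\mathrm{loc}}$ and \Cref{lem:maslov}, and sum. So the route is the same; the only point worth flagging is where you leave the sign unsettled.

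There, one of your two fallback options would not work. The potential discrepancy factor $(-1)^{A_2(x)}$ is \emph{not} global---it depends on the generator $x$---so it cannot be absorbed into the choice of absolute Maslov grading, which only changes $(-1)^{M(x)}$ by a constant $c$ as in \Cref{lem:maslov}. (Your other option, substituting $-\lambda$ for $\lambda$, would absorb it, but then you would prove a statement different from the one asserted.) The correct resolution is that no discrepancy arises: with the orientations implicit in \Cref{lem:relating} (each $\alpha_k$ inheriting the orientation of $\varepsilon_k$, each $\beta_j$ that of $\rho(\eta_j)$), every anchor point is a \emph{negative} intersection of $\alpha_j$ with $\beta_j$, so the Leibniz factor $-\lambda\,\delta_{j_s k_{\sigma(s)}}$ already equals $\mathrm{sgn}(x_{\mathrm{anchor}})\,t^{A_1^{\mathrm{loc}}}\lambda^{A_2^{\mathrm{loc}}}$ term for term, and $\mathrm{sgn}(\phi)\prod_j \mathrm{sgn}(x_j)=\mathrm{sgn}(x)$ by (\ref{eq:orientation}) with no leftover sign. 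The paper does not spell this out in the present proof but does record the fact---``each anchor point has Alexander grading zero and a negative orientation''---in the proof of \Cref{thm:clear}; it is precisely what makes the rewriting here valid. With that in hand, \Cref{lem:maslov} finishes the argument exactly as you describe.
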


	\begin{proof}
		The relationship between Heegaard Floer theories and determinants is well known---see, for example, \cite[Proposition 3.3]{ras03}. In our case, applying Lemma \ref{lem:reform}, we have
		\begin{align*}
			\det(A^{\bf jk}) &= \sum_{\sigma \in S_m} \text{sgn}(\sigma)\prod_{s = 1}^m A^{\bf jk}_{j_s k_{\sigma(s)}} \\
			&= \sum_{\sigma \in S_m} \text{sgn}(\sigma)\prod_{s = 1}^m \big( \psi_n(\rho)_{j_s k_{\sigma(s)}} - \lambda\delta_{j_sk_{\sigma(s)}} \big) \\
			&= \sum_{\sigma \in S_m} \text{sgn}(\sigma) \prod_{s = 1}^m \big( \sum_{x \in \rho(\varepsilon_{j_s}) \cap \eta_{k_{\sigma(s)}}} \text{sgn}(x)t^{S(x)}- \lambda\delta_{j_sk_{\sigma(s)}} \big).
		\end{align*}
		The symbol $\delta_{jk}$ here denotes the Kronecker delta. By the discussion preceeding Definitions \ref{def:anchor} and \ref{def:grading}, there is a bijection between the factors in the above product and the intersection points of $\alpha_{j_s}$ and $\beta_{k_{\sigma(s)}}$, where the Kronecker delta corresponds to the $j_s$ anchor point. Using the definitions of $A^\text{loc}_1$ and $A^\text{loc}_2$ (see Remark \ref{rmk:local_two}), we can write:
		$$
		\det(A^{\bf jk}) = \sum_{\sigma \in S_m} \text{sgn}(\sigma) \prod_{s = 1}^m \sum_{x \in \alpha_{j_s} \cap \beta_{k_{\sigma(s)}}} \text{sgn}(x)t^{A_1^\text{loc}(x)}\lambda^{A_2^\text{loc}(x)}.
		$$
		Rearranging the above equation, we find that
		\begin{align*}
				\det(A^{\bf jk}) &= \sum_{\sigma \in S_m} \sum_{\{(x_1, \dots, x_m) \mid x_s \in \alpha_{j_s} \cap \beta_{k_{\sigma(s)}}\}} \big(\text{sgn}(\sigma) \prod_{s = 1}^m \text{sgn}(x_s) \big) t^{\sum_{s = 1}^m A_1^\text{loc}(x_s)} \lambda^{\sum_{s = 1}^m A_2^\text{loc}(x_s)} \\
			&= \sum_{x \in \mathbb{T}_\alpha \cap \mathbb{T}_\beta} \text{sgn}(x) t^{A_1(x)}\lambda^{A_2(x)}.
		\end{align*}
		Finally, applying Lemma \ref{lem:maslov}, we can choose an absolute Maslov grading $M$ such that
		$$
		\det(A^{\bf jk}) = \sum_{x \in \mathbb{T}_\alpha \cap \mathbb{T}_\beta}(-1)^{M(x)} t^{A_1(x)}\lambda^{A_2(x)} = \sum_{j,k,r} (-1)^{j} t^k \lambda^r \text{rk}(\widehat{HFB}_{j,k,r}(\rho, {\bf j}, {\bf k})).
		$$
	\end{proof}

	\subsection{Invariance}
	
	We have:
	
	\begin{thm}
		For any $\rho \in B_n$ and multi-indices ${\bf j, k} \subset \{1, \dots, n\}$, the graded homology groups $\widehat{HFB}(\rho, {\bf j, k})$ are an invariant of $\rho$, ${\bf j}$ and ${\bf k}$.
	\end{thm}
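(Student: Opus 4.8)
The plan is to show that $\widehat{HFB}(\rho,{\bf j},{\bf k})$, together with its relative Maslov grading and its absolute gradings $A_1$ and $A_2$, does not depend on any choice made in building the Heegaard diagram $\mathcal H(\rho,{\bf j},{\bf k})$ of \Cref{def:heediag}. There are five kinds of choices: (a) the homeomorphism $\rho\colon (D,z)\to(D,z)$ representing the braid class, which determines the bridge presentation $\mathcal B(\rho)$ of \Cref{def:bridge}, the overbridges $o_j=\rho(\bar o_j)$, and hence the curves $\beta$; (b) the radii $\epsilon$ of the surgery disks; (c) the arcs through the surgery handles used to close the truncated underbridges into the curves $\alpha$; (d) the regular neighbourhoods defining the curves $\beta$; and (e), once the diagram is fixed, an almost complex structure on $\mathrm{Sym}^m(\Sigma)$ and any small isotopies of $\mathbb T_\alpha,\mathbb T_\beta$ needed to achieve admissibility. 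I would handle (e) by citing the continuation-map invariance of Heegaard Floer homology \cite{os04c}, which applies in the multi-pointed setting without change.

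For (b)--(d), each such choice alters $\mathcal H(\rho,{\bf j},{\bf k})$ only by an ambient isotopy of the Heegaard surface supported away from the basepoints $(z\setminus\{z_{k_s}\})\cup(w\setminus\{w_{k_s}\})\cup\{p_L,p_R\}$, so it induces an isomorphism of $\widehat{HF}$ by isotopy invariance. For (a), the key step is to convert equality of braid classes into Heegaard moves. Two homeomorphisms representing $\rho$ are isotopic through homeomorphisms of $(D,z)$ fixing $\partial D$; extending over $S^3=\C\cup\{\infty\}$ as in \Cref{sec:bridges}, such an isotopy carries the whole bridge configuration --- the link $m(K)$, the overbridges, the underbridges, the axis $U$, and the basepoints $z$ and $w$ --- through an ambient isotopy of $S^3$. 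The $\alpha$-curves and basepoints are unaffected, while the overbridges $\{o_j\}_{j\in{\bf j}}$, and therefore the $\beta$-curves $\{\partial N(o_j)\}_{j\in{\bf j}}$, move through pairwise disjoint systems; hence this ambient isotopy carries one diagram $\mathcal H(\rho,{\bf j},{\bf k})$ to another, which (after a further basepoint-avoiding isotopy, together with handleslides among the $\beta$-curves if one prefers to return them to a fixed reference position) is isotopic to it as a multi-pointed Heegaard diagram. No stabilization is required, since the genus $m=|{\bf k}|$ and the numbers of curves and basepoints do not change. Applying the invariance of $\widehat{HF}$ under these moves gives an isomorphism $\widehat{HFB}(\rho,{\bf j},{\bf k})\cong\widehat{HFB}(\rho',{\bf j},{\bf k})$. (Equivalently, one may puncture $\Sigma$ at all basepoints and regard $\mathcal H(\rho,{\bf j},{\bf k})$ as a sutured Heegaard diagram for a fixed sutured complement of $m(K)\cup U$, then invoke the invariance of sutured Floer homology; but the above already suffices.)

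Finally I would check that the gradings are carried along. The relative Maslov grading and the relative gradings $\widetilde A_1,\widetilde A_2$ are defined from Maslov indices and from homology classes of difference curves under the maps $h_1,h_2$, so they are preserved by the isomorphisms above --- this is the same reasoning that makes the Maslov and Alexander gradings of link Floer homology invariants, and it uses \Cref{lem:alex_disc} and \Cref{lem:maslov}. The absolute lifts are then intrinsic to the data $(\rho,{\bf j},{\bf k})$: by \Cref{def:grading_two}, $A_2$ counts the anchor points of a generator, and an anchor point is specified canonically in \Cref{def:anchor}; while by \Cref{def:grading} the grading $A_1$ is $0$ on anchor points and, on every other intersection point, equals the sheet value $S(x')$, which by \Cref{lem:s_intersect} is the homological quantity $\sum_r\gamma_{x'}\cdot\ell_r$ associated to the pair $(\rho(\eta_j),\varepsilon_k)$ in ${\bf D}_n$ and so is unchanged under isotopy of $\rho$. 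Hence $A_1$ and $A_2$ agree under the isomorphisms constructed above, and the graded group $\widehat{HFB}(\rho,{\bf j},{\bf k})$ depends only on $\rho$, ${\bf j}$ and ${\bf k}$.

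I expect the main obstacle to be the reduction in the second paragraph: showing carefully that two instances of the construction differ by isotopies and handleslides supported in the complement of every basepoint --- in particular keeping these moves away from the extra basepoints $\{p_L,p_R\}$ attached to the braid axis --- while simultaneously confirming that they fix the absolute $A_1$-grading, including in the cases ${\bf j}\cap{\bf k}=\varnothing$ where there is no anchor point to pin the normalisation down; here the characterisation of $A_1^{\text{loc}}$ through the sheet function and \Cref{lem:s_intersect} is what makes the normalisation canonical.
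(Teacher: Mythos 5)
Your proposal is correct and takes essentially the same approach as the paper: the paper's proof observes that $\mathcal{H}(\rho,\mathbf{j},\mathbf{k})$ is defined up to isotopy of the $\alpha$ and $\beta$ curves and then cites the standard isotopy-invariance of Heegaard Floer homology (\cite[Proposition 5.1]{ras03}, \cite[Theorem 7.3]{os04c}). You spell out in far more detail which choices are absorbed by isotopies, and you explicitly address the invariance of the gradings $A_1$, $A_2$ (which the paper leaves implicit), but the underlying mechanism is the same.
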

	
	\begin{proof}
		Since the Heegaard diagram $\mathcal{H}(\rho, {\bf j,k})$ is defined up to isotopy of the $\alpha$ and $\beta$ curves, it suffices to check invariance of $\widehat{HFB}(\rho, {\bf j, k})$ under an isotopy of some curve, say $\alpha_j \in \alpha$, which fixes the $\beta$ curves and the other curves in $\alpha$. Invariance under this kind of isotopy follows just as it does in the cases of knot Floer homology and Heegaard Floer homology---we refer the reader to \cite[Proposition 5.1]{ras03} or \cite[Theorem 7.3]{os04c}.
	\end{proof}
	
	\section{Properties and Extensions of $\widehat{HFB}$}
	\label{sec:propss}
	
	\subsection{Essential properties}
	\label{sec:props}
	
	Let $\rho \in B_n$ be a braid with closure $K \subset S^3$. Then there are
	$$
	\sum_{m = 1}^n \binom{n}{m}^2 = \binom{2n}{n} - 1
	$$
	possible (non-empty) choices of multi-indices ${\bf j, k} \subset [n]$, and as many homology theories associated to $\rho$. We emphasize three special cases:
	\begin{enumerate}
		\item If ${\bf j} = {\bf k} = [n]$, then the Euler characteristic of the resulting group $\widehat{HFB}(\rho, [n], [n])$ is
		$$
		\det(\psi_n(\rho) - \lambda I_n),
		$$
		and the Heegaard diagram $\mathcal{H}(\rho, [n], [n])$ (see Definition \ref{def:heediag}) is also a Heegaard diagram for the link $m(K) \cup U \subset S^3$, where $U$ is the braid axis of Definition \ref{def:meridian}. It follows that if $m(K)$ is a knot, then $\widehat{HFB}(\rho, [n], [n])$ is grading-preserving isomorphic to the link Floer homology group $\widehat{HFL}(m(K) \cup U)$. If $m(K)$ has more than one component, then there is still a group isomorphism
		$$
		\widehat{HFL}(m(K) \cup U) \to \widehat{HFB}(\rho, [n], [n])
		$$
		induced by the common Heegaard diagram $\mathcal{H}(\rho, [n], [n])$. This isomorphism preserves the Maslov grading and the grading $A_2$, but collapses the gradings associated with the components of $m(K)$ to the single grading $A_1$. The equivalence of $\widehat{HFB}(\rho, [n], [n])$ with $\widehat{HFL}(m(K) \cup U)$, combined with the fact that $\widehat{HFB}(\rho, [n], [n])$ categorifies $\det(\psi_n(\rho)) - \lambda I_n)$, yields a new proof of a theorem of Morton \cite{mor99}: that the multi-variable Alexander polynomial $\Delta_{K \cup U}$ is equal to $\det(\psi_n(\rho)) - \lambda I_n)$, after some variable substitutions.

		\item If $|{\bf j}| = |{\bf k}| = n - 1$, then $\mathcal{H}(\rho, {\bf j}, {\bf k})$ is closely related to the knot Floer homology $\widehat{HFK}(m(K))$. To make this precise, let  $\mathcal{H}(\rho, {\bf j}, {\bf k})$ be as in Definition \ref{def:heediag}, and let $\mathcal{H}'$ be the diagram resulting from removing the basepoints $p_L$ and $p_R$ from $\mathcal{H}(\rho, {\bf j}, {\bf k})$. Then $\mathcal{H}'$ is a Heegaard diagram for $m(K)$.
		
		It is well known in Heegaard Floer theory that erasing basepoints from a Heegaard diagram induces a spectral sequence---see, for example, \cite{os08}. Here, we obtain:
		
		\begin{prop}
			There is a spectral sequence whose $E_1$ term is $\widehat{HFB}(\rho, {\bf j}, {\bf k})$, and whose $E^\infty$ term is isomorphic to $\widehat{HFK}(m(K))$, resulting from forgetting the basepoints $p_L$ and $p_R$. Each chain complex involved splits along the $A_1$ grading on \break $\widehat{HFB}(\rho, {\bf j}, {\bf k})$, which becomes the Alexander grading on $\widehat{HFK}(m(K))$.
		\end{prop}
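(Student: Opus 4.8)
The plan is to realize $\widehat{CFB}(\rho,{\bf j},{\bf k})$ as the associated graded of a filtered chain complex that computes $\widehat{HFK}(m(K))$, so that the spectral sequence of that filtration yields the Proposition. Write $\mathcal{H} = \mathcal{H}(\rho,{\bf j},{\bf k})$ and let $\mathcal{H}'$ be the diagram obtained from $\mathcal{H}$ by deleting the two basepoints $p_L$ and $p_R$ of \Cref{def:meridian}. The first step is to identify $\mathcal{H}'$: when $|{\bf j}| = |{\bf k}| = n-1$ it is a genus $n-1$ diagram with $n-1$ alpha curves, $n-1$ beta curves and exactly one surviving $z$-basepoint and one surviving $w$-basepoint, and — since this is precisely the $m = n-1$ case of the construction of \Cref{sec:diags}, which specializes to Rasmussen's — it is a Heegaard diagram adapted to $m(K)$. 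Hence $\widehat{HF}(\mathcal{H}') \cong \widehat{HFK}(m(K))$, and by (the proof of) \Cref{lem:grading} the grading $A_1$, which is built from linking numbers with $m(K)$, is carried to the single Alexander grading of $\widehat{HFK}(m(K))$.

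Next I would compare differentials. The complexes $\widehat{CFB}(\rho,{\bf j},{\bf k}) = \widehat{CF}(\mathcal{H})$ and $\widehat{CF}(\mathcal{H}')$ have the same generating set $\mathbb{T}_\alpha \cap \mathbb{T}_\beta$; the differential $\partial'$ of $\widehat{CF}(\mathcal{H}')$ counts pseudoholomorphic disks with zero multiplicity at the surviving $z$ and $w$ basepoints but with no constraint at $p_L, p_R$, while the differential of $\widehat{CFB}(\rho,{\bf j},{\bf k})$ counts exactly those of these disks that in addition miss $p_L$ and $p_R$. By positivity of intersection with the complex hypersurfaces $\hat{p}_L,\hat{p}_R$, every disk contributing to $\partial'$ has $n_{p_L}, n_{p_R} \geq 0$. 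It follows, by the standard ``forget a basepoint'' bookkeeping of \cite{os08} — applied to $p_L$ and $p_R$ in turn, so that at each stage the single forgotten basepoint has multiplicity determined by the endpoints of the disk — that $\widehat{CF}(\mathcal{H}')$ carries a bounded, exhaustive, decreasing filtration whose associated graded differential is the differential of $\widehat{CFB}(\rho,{\bf j},{\bf k})$.

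The spectral sequence of this filtered complex then has $E_1 \cong H_*(\widehat{CFB}(\rho,{\bf j},{\bf k})) = \widehat{HFB}(\rho,{\bf j},{\bf k})$ and converges to $H_*(\widehat{CF}(\mathcal{H}')) \cong \widehat{HFK}(m(K))$, so its $E^\infty$ page is the associated graded of $\widehat{HFK}(m(K))$ and in particular is isomorphic to $\widehat{HFK}(m(K))$ as a group. For the $A_1$-splitting: any disk contributing to $\partial'$ has vanishing multiplicity at the basepoints of $m(K)$, so the argument of \Cref{lem:alex_disc} shows that its difference curve has zero linking number with $m(K)$ and that it therefore preserves $A_1$. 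Thus $A_1$ remains a genuine $\Z$-grading on $\widehat{CF}(\mathcal{H}')$, it is compatible with the filtration, and the whole filtered complex — hence every page of the spectral sequence — splits as a direct sum over the values of $A_1$; under the identification of the $E^\infty$ page with $\widehat{HFK}(m(K))$, this is the Alexander splitting by the first step.

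I expect the main obstacle to be the filtration in the second step: one must check that deleting $p_L$ and $p_R$ really does produce a well-defined bounded filtration on $\widehat{CF}(\mathcal{H}')$ whose leading term is the differential of $\widehat{CFB}(\rho,{\bf j},{\bf k})$ — that is, that there is no cancellation in filtration level caused by a disk passing through $p_L$ and $p_R$ with opposite effect. The safe route is to peel the two basepoints off one at a time, at each stage using the fact that a disk missing all the other basepoints has its multiplicity at the remaining one pinned down by its endpoints, and then to observe that the two successive filtrations combine lexicographically into the single filtration used above; alternatively, one composes the two resulting spectral sequences. Everything else — the identification of $\mathcal{H}'$, the computation of $\widehat{HF}(\mathcal{H}')$, and the $A_1$-grading statement — is routine given the results already established.
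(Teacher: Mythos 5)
Your proposal follows the same route the paper has in mind: the paper offers no proof beyond citing \cite{os08} for the standard ``forget a basepoint'' spectral sequence, and your outline supplies exactly the details that reference covers. The identification of $\mathcal{H}'$ with Rasmussen's diagram for $m(K)$, the passage from $A_1$ to the Alexander grading, and the splitting of every page along $A_1$ (because all disks counted in $\widehat{CF}(\mathcal{H}')$ avoid the surviving $z,w$ basepoints) are all correct and match the intended argument.

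The one thing worth flagging is that the concern you raise in your final paragraph is a non-issue, so the extra care you propose (peeling off $p_L$, $p_R$ one at a time, or passing to a lexicographic filtration) is unnecessary. Every pseudoholomorphic disk has $n_{p_L}(D), n_{p_R}(D) \geq 0$ by positivity of intersection, so the quantity $n_{p_L} + n_{p_R}$ is nonnegative and vanishes only when both multiplicities do; filtering $\widehat{CF}(\mathcal{H}')$ directly by this sum therefore gives a bounded decreasing filtration whose order-zero differential is precisely that of $\widehat{CFB}(\rho, {\bf j}, {\bf k})$, with no possibility of cancellation. Moreover both multiplicities are determined by the endpoints of a disk once $n_z = n_w = 0$: the only periodic domain of this genus-$(n-1)$, $(n-1)$-curve diagram of $S^3$ is a multiple of $[\Sigma]$, which has $n_z = 1$, so the group of periodic domains with $n_z = n_w = 0$ is trivial. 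With that observation the single-filtration argument closes immediately, and the rest of your proof goes through as written.
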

		
		At the decategorified level, the Euler characteristic of $\widehat{HFB}(\rho, {\bf j}, {\bf k})$ is the determinant of a codimension-one submatrix of $\psi_n(\rho) - \lambda I_n$. Substituting $\lambda = 1$, this quantity coincides the Alexander polynomial of $m(K)$, which is also the Alexander polynomial of $K$.
		
		\item If $|{\bf j}| = |{\bf k}| = 1$, so that ${\bf j} = \{j\}$ and ${\bf k} = \{k\}$, then $\widehat{HFB}(\rho, {\bf j}, {\bf k})$ categorifies the matrix element $(\psi_n(\rho) - \lambda I_n)_{jk}$. In this case it's easy to compute the groups $\widehat{HFB}(\rho, {\bf j}, {\bf k})$: they simply count intersections between the curves $\alpha_j$ and $\beta_k$ in $\mathcal{H}(\rho, {\bf j}, {\bf k})$ when the two curves are in minimal position with respect to each other. This is very similar to the theory defined in \cite{bou10}.
	\end{enumerate}
	
	The collection of groups $\widehat{HFB}$ detects the trivial braid in at least two ways: at the ``top level'' ${\bf j} = {\bf k} = [n]$, and at the ``bottom level'' $|{\bf j}| = |{\bf k}| = 1$. The first of these is essentially a result of Baldwin and Grigsby.
	
	\begin{thm}[{\cite[Theorem 1b]{bagr15}}]
		\label{thm:bagr}
		Let $\rho \in B_n$ be an $n$-strand braid, and let $\mathbf{1}_n \in B_n$ be the trivial braid. Then $\rho$ is trivial if and only if $\psi_n(\rho)|_{t = 1}$ is the identity matrix, and 
		\begin{equation}
			\label{eq:triv_hom}
			\widehat{HFB}(\rho, [n], [n]) \cong \widehat{HFB}(\mathbf{1}_n, [n], [n]).
		\end{equation}
	\end{thm}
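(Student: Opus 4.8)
The plan is to reduce the statement to the Baldwin--Grigsby detection theorem \cite[Theorem 1b]{bagr15} via the identification of $\widehat{HFB}(\rho, [n], [n])$ with a link Floer group carried out in \Cref{sec:props}. The forward implication is trivial: if $\rho = \mathbf{1}_n$ then $\psi_n(\rho)|_{t = 1} = I_n$ and (\ref{eq:triv_hom}) is an equality of definitions, so all the content is in the reverse implication. I would begin by unwinding the first hypothesis. Setting $t = 1$ in (\ref{eq:explicit_burau}) sends each generator $\sigma_j$ to the matrix of the transposition $(j\ j{+}1)$, so $\psi_n(\rho)|_{t = 1}$ is the permutation matrix of the underlying permutation of $\rho$; thus the hypothesis $\psi_n(\rho)|_{t=1} = I_n$ is equivalent to $\rho$ being a pure braid. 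In that case the closure $K$ (and its mirror $m(K)$) is an $n$-component link whose $j$th component is $o_j \cup u_j$, an unknot, and the axis $U$ of \Cref{def:meridian} meets each component exactly once.

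Next I would transport the second hypothesis across the identification recalled in \Cref{sec:props} for the case ${\bf j} = {\bf k} = [n]$: there the diagram $\mathcal{H}(\rho, [n], [n])$ is a Heegaard diagram for $m(K) \cup U \subset S^3$, and $\widehat{HFB}(\rho, [n], [n])$ is obtained from the multi-graded link Floer homology $\widehat{HFL}(m(K) \cup U)$ by collapsing the $n$ Alexander gradings of the components of $m(K)$ into the single grading $A_1$, while retaining the Maslov grading and the grading $A_2$ associated to $U$. Since $\widehat{HFL}$ of a link and of its mirror differ only by a relabelling of the Maslov and Alexander gradings, (\ref{eq:triv_hom}) forces the link Floer homology of $K \cup U$ to agree, after this collapse, with that of $U$ together with the $n$-component unlink (the closure of $\mathbf{1}_n$); in particular their total ranks agree, as do their $A_1$- and $A_2$-graded pieces.

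The last step is to feed this into \cite[Theorem 1b]{bagr15}. The complement $S^3 \setminus (K \cup U)$ is the mapping torus of $\rho$ acting on the $n$-punctured disk, so it fibers over $S^1$ with fiber of Euler characteristic $1 - n$ and monodromy $\rho$; link Floer homology detects fiberedness and the genus of the fiber (cf.~\cite{ni07, gh08}), and Baldwin and Grigsby upgrade this, once the underlying permutation is known to be trivial, to the conclusion that the monodromy itself is trivial. Together with the two previous steps this gives $\rho = \mathbf{1}_n$.

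The step I expect to require the most care is the second: one must check that collapsing the $n$ per-component Alexander gradings to $A_1$ does not discard information used in \cite{bagr15}. This is exactly what the pure-braid hypothesis buys back---with $\rho$ pure, the number of components, their unknottedness, and the way $U$ threads them are all pinned down, so the collapsed Floer groups together with this geometric data recover the input needed by the Baldwin--Grigsby argument. Making that bookkeeping precise, rather than quoting \cite{bagr15} verbatim, is the one place where the proof is more than a formality.
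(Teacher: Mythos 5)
Your proposal follows the same route as the paper: identify $\widehat{HFB}(\rho,[n],[n])$ with a Floer group of $m(K) \cup U$, observe that $\psi_n(\rho)|_{t=1} = I_n$ is equivalent to $\rho$ being a pure braid, and invoke \cite[Theorem 1b]{bagr15}. The one place where you diverge is the grading-collapse issue, and there your resolution is vaguer than the paper's. You identify the difficulty correctly---the isomorphism from $\widehat{HFL}(m(K)\cup U)$ to $\widehat{HFB}(\rho,[n],[n])$ collapses the $n$ per-component Alexander gradings of $m(K)$ down to $A_1$, and the paper actually collapses further, to the single Alexander grading of $\widehat{HFK}(m(K)\cup U)$---but the explanation you offer, that the pure-braid hypothesis ``buys back'' the lost data, is not the right mechanism. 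The pure-braid hypothesis is used only to feed the permutation-triviality input into Baldwin--Grigsby; it does not by itself justify discarding the multi-variable grading. What actually makes the collapse harmless is that the Baldwin--Grigsby argument, which hinges on the transverse invariant $\widehat{t}$, runs at the knot-Floer (single Alexander grading) level: the paper dispatches this in a footnote, citing \cite{bvvv13} for the construction of $\widehat{t}$ in knot Floer homology. So the gap you flagged as ``needing bookkeeping'' is real, but the cure is to upgrade the cited theorem to its knot-Floer incarnation rather than to try to recover the multi-grading from geometric side data.
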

	
	\begin{proof}
		As discussed above,
		$$
		\widehat{HFB}(\rho, [n], [n]) \cong \widehat{HFK}(m(K) \cup U)
		$$
		by an isomorphism which preserves the Maslov grading and collapses the Alexander gradings $A_1$ and $A_2$ to a single grading. It follows from the cited result of Baldwin and Grigsby that $\rho$ is trivial if and only if (\ref{eq:triv_hom}) holds and $\rho$ is a {\em pure braid}, i.e.~$\rho$ is in the kernel of the homomorphism $B_n \to S_n$ which maps each braid to its induced permutation on the punctures of ${\bf D}_n$.\footnote{Although Baldwin and Grigsby's result is stated for link Floer homology, the proof goes through equally well for knot Floer homology. The invariant $\widehat{t}$ which they use is defined for knot Floer homology in \cite{bvvv13}.} This latter condition occurs if and only if $\psi_n(\rho)|_{t = 1}$ is the identity matrix, since at $t = 1$ the Burau representation recovers the above homomorphism $B_n \to S_n$.
	\end{proof}
	
	The proof that $\widehat{HFB}$ detects the trivial braid at the ``bottom level'' is classical and direct.
	
	\begin{thm}[cf.~{\cite[Theorem 13]{bou10}}]
		Let $\rho \in B_n$. Then $\rho$ is the trivial braid if and only if $\widehat{HFB}(\rho, \{j\}, \{k\}) = 0$ for all $j \neq k$.
	\end{thm}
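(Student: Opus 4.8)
The plan is to convert the vanishing of the off‑diagonal groups into a statement about the action of $\rho$ on $\pi_1({\bf D}_n)$, and then invoke faithfulness of the Artin action. The crux — the only step requiring real care — is the translation itself: when $j \neq k$, the diagram $\mathcal{H}(\rho, \{j\}, \{k\})$ has exactly one $\alpha$‑curve $\alpha_k$ and one $\beta$‑curve $\beta_j$, and since the multi‑indices share no common index there is no anchor point (\Cref{def:anchor}); so by the dichotomy recorded just before that definition every point of $\alpha_k \cap \beta_j$ lies in $\mathring{\bf D}_n$. Combining this with \Cref{lem:relating}, which identifies $\beta_j$ with $\rho(\eta_j)$ and $\alpha_k$ with $\varepsilon_k$ inside $\mathring{\bf D}_n$, and with the observation in \S\ref{sec:props} that for singleton multi‑indices the differential vanishes once the two curves are in minimal position, I get that $\widehat{HFB}(\rho, \{j\}, \{k\})$ has rank equal to the geometric intersection number of $\rho(\eta_j)$ and $\varepsilon_k$ in ${\bf D}_n$. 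Thus this group is zero precisely when $\rho(\eta_j)$ can be isotoped in ${\bf D}_n$, rel its fixed endpoint $x \in \partial D$, off $\varepsilon_k$. For the trivial braid this holds for all $j \neq k$ because standard noodles and standard chopsticks of distinct index are disjoint (clear from the definitions, cf.\ \Cref{fig:fna}); this is the ``only if'' direction.

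For the converse, assume $\widehat{HFB}(\rho, \{j\}, \{k\}) = 0$ for all $j \neq k$, i.e.\ that $\rho(\eta_j)$ is isotopic off each $\varepsilon_k$ with $k \neq j$. Fix $j$. Since the arcs $\{\varepsilon_k : k \neq j\}$ are pairwise disjoint, an innermost‑disk argument with the bigon criterion shows that a bigon between $\rho(\eta_j)$ and $\bigcup_{k \neq j}\varepsilon_k$ would contain a bigon between $\rho(\eta_j)$ and a single $\varepsilon_k$; hence $\rho(\eta_j)$ can be isotoped (rel $x$) off $\bigcup_{k\neq j}\varepsilon_k$ simultaneously. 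Cutting ${\bf D}_n$ along these $n-1$ essential arcs yields a once‑punctured disk $P_j$ with $x \in \partial P_j$, and $\pi_1(P_j, x) \cong \Z$ with generator mapping to $\eta_j$ under the canonical homomorphism to $\pi_1({\bf D}_n, x) \cong F_n$. Therefore $\rho_*(\eta_j) = [\rho(\eta_j)] = \eta_j^{m_j}$ in $F_n$ for some integer $m_j$.

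It remains to see that each $m_j = 1$. Since $\rho_*$ is an automorphism of $F_n$, the induced automorphism of $H_1({\bf D}_n)\cong\Z^n$ sends the class of $\eta_j$ to $m_j$ times itself, so its determinant $\prod_j m_j$ is $\pm 1$ and each $m_j \in \{+1,-1\}$. Moreover $\rho$ fixes $\partial D$ pointwise, so $\rho_*$ fixes the peripheral element $\eta_1\eta_2\cdots\eta_n$ represented by $\partial D$; thus $\eta_1^{m_1}\eta_2^{m_2}\cdots\eta_n^{m_n} = \eta_1\eta_2\cdots\eta_n$ in $F_n$, and since both sides are reduced words of length $n$ (consecutive letters are distinct generators), comparing them letter by letter forces $m_j = 1$ for all $j$. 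Hence $\rho_* = \id$ on $\pi_1({\bf D}_n)$, and since the Artin action of $B_n$ on $\pi_1({\bf D}_n) \cong F_n$ is faithful \cite{bir74}, we conclude $\rho = \mathbf{1}_n$. As indicated above, the main obstacle is making the first step precise — that the off‑diagonal groups detect exactly the geometric intersection number of $\rho(\eta_j)$ with $\varepsilon_k$, and that vanishing against each $\varepsilon_k$ separately lets one push $\rho(\eta_j)$ into $P_j$; everything after that is the short, classical observation that a braid acting trivially on $\pi_1$ of the punctured disk is trivial.
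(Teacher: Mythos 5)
Your argument is correct in outline, but the first half leaves unproved exactly the step the paper labors over, while the second half is actually tighter than the paper's own.

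On the first half: you assert that $\widehat{HFB}(\rho,\{j\},\{k\})$ has rank equal to the geometric intersection number of $\rho(\eta_j)$ and $\varepsilon_k$ in ${\bf D}_n$, and you derive it from the facts that (a) all points of $\alpha_k\cap\beta_j$ lie in $\mathring{\bf D}_n$, (b) \Cref{lem:relating} identifies the two pictures there, and (c) the rank is the minimal intersection number of $\alpha_k,\beta_j$ in $\Sigma$. But (a)--(c) do not by themselves give the claim: a priori a bigon between $\alpha_k$ and $\beta_j$ in the torus $\Sigma$ could run over the attached handle or through the region containing $p_L,p_R$, in which case reducing intersections in $\Sigma$ would not correspond to any isotopy of $\rho(\eta_j)$ and $\varepsilon_k$ inside ${\bf D}_n$. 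The paper's proof devotes most of its length to exactly this point: it shows that any innermost bigon must avoid the basepoints and must lie entirely in $\mathring{\bf D}_n$, because otherwise one of the two curves would have to be isotopable off the handle or off $\{p_L,p_R\}$, which is impossible. Without some version of that argument, your ``translation'' is an assertion rather than a proof, and you rightly flag it as the crux.

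On the second half: here you genuinely diverge from the paper, and to good effect. The paper isotopes $\rho(\eta_k)$ into the annulus ${\bf D}_n\setminus\bigcup_{j\ne k}\varepsilon_j$ and concludes from ``freely isotopic to the core'' that $\rho$ acts trivially on $\pi_1$; as written, that only pins down $\rho_*(\eta_k)$ up to sign and up to conjugacy (via the abelian $\pi_1$ of the annulus, up to sign). Your route instead cuts along the $\varepsilon_k$ to land in a once-punctured disk $P_j$, reads off $\rho_*(\eta_j)=\eta_j^{m_j}$ in $\pi_1({\bf D}_n,x)$ on the nose, and then eliminates the sign ambiguity cleanly by comparing $\rho_*(\eta_1\cdots\eta_n)=\eta_1\cdots\eta_n$ as reduced words. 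This is a more complete version of the final step, and the appeal to faithfulness of the Artin action is the same fact the paper uses (phrased there via ${\bf D}_n$ being aspherical). Your innermost-bigon argument for pushing $\rho(\eta_j)$ off all the $\varepsilon_k$ simultaneously is also a standard and correct filling of a step the paper passes over silently. So: supply the paper's bigon-containment argument for the translation step, and the rest of your proof stands as a legitimate (and in the endgame, cleaner) alternative.
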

	
	\begin{proof}
		It is easy to check that the latter hypothesis is satisfied if $\rho$ is the trivial braid. Conversely, suppose $\rho \in B_n$ satisfies $\widehat{HFB}(\rho, \{j\}, \{k\}) = 0$ for all $j \neq k$. Fix curves $\alpha_j$ and $\beta_k$ for distinct $j$ and $k$, and let
		$$
		\mathcal{H}(\rho, \{j\}, \{k\}) = (\Sigma, \{\alpha_j\}, \{\beta_k\}, z \cup w \setminus \{z_j, w_j\} \cup \{p_L, p_R\})
		$$
		be the corresponding Heegaard diagram. Note that $\Sigma$ is a torus; more specifically, following Section \ref{sec:diags}, $\Sigma$ is the sphere $\widehat{F}$ with one handle attached. We use this decomposition below.
		
		We will show that $\alpha_j$ and $\beta_k$ can be made disjoint by an isotopy supported within $\mathring{\bf D}_n$ (see Convention \ref{conv:embedded}). If $\alpha_j$ and $\beta_k$ are already disjoint, then we're done. If not, then as discussed above, $\alpha_j$ and $\beta_k$ are not in minimal position. It follows that $\alpha_j$ and $\beta_k$ cobound a bigon $B \subset \Sigma$, such that $B$ avoids the basepoints of $\Sigma$ \cite[Proposition 3.7]{paro99}.
		
		Let $a \subset \alpha_j$ and $b \subset \beta_k$ be the arcs of $\partial B$, and let $x_1, x_2 \in \alpha_j \cap \beta_k$ be their mutual endpoints. We claim that neither $x_1$ nor $x_2$ is an anchor point: indeed, if $x_1$ is the lone anchor point, then $x_2$ is not, so $A_2(x_1) = 1$ and $A_2(x_2) = 0$. Since $\partial B$ is a difference curve for $x_1$ and $x_2$, this implies $\partial B$ is not null-homologous in $\Sigma \setminus \{p_L, p_R\}$, so $\partial B$ cannot bound a bigon in this case.
		
		If follows that $x_1$ and $x_2$ both lie in $\mathring{\bf D}_n \subset \Sigma$. We claim the arc $a$ lies in $\mathring{\bf D}_n \subset \Sigma$ as well. This follows from the fact that $\alpha_j \setminus \mathring{\bf D}_n$ has one connected component, so if $a$ is not contained in $\mathring{\bf D}_n$, then $a$ must contain all of $\alpha_j \setminus \mathring{\bf D}_n$. Then, since $a \cup b$ bounds the bigon $B$, $\alpha_j \setminus \mathring{\bf D}_n$ and $\partial  \mathring{\bf D}_n$ cobound a sub-bigon of $B$, which implies $\alpha$ can be isotoped to lie in $\mathring{\bf D}_n$. But this is absurd, since $\alpha$ traverses the attached handle.
		
		The same logic shows that if $b$ does not lie in $\mathring{\bf D}_n$, then $\beta_k$ can be isotoped $\mathring{\bf D}_n$ by an isotopy that avoids basepoints. This, too, is absurd, since the component of $\beta_k$ which does not lie in $\mathring{\bf D}_n$ borders a foot of the handle on one side and the basepoints $p_L$ and $p_R$ on the other. Thus $a \cup b = \partial B$ is contained in $\mathring{\bf D}_n$, and since $B$ avoids the basepoints $p_L$ and $p_R$, the whole bigon $B$ is contained in $\mathring{\bf D}_n$. By isotoping the arc $a$ through $B$, we can remove the intersection points $x_1$ and $x_2$ by an isotopy supported in $\mathring{\bf D}_n$. Repeating this process inductively proves our original claim: that $\alpha_j$ and $\beta_k$ can be made disjoint by an isotopy supported within $\mathring{\bf D}_n$.
		
		Appealing to Lemma \ref{lem:relating}, we've shown that the chopstick $\varepsilon_j$ and the noodle $\rho(\eta_k)$ can be made disjoint by isotopy in ${\bf D}_n$. In fact, by hypothesis, $\rho(\eta_k)$ can be made disjoint from $\varepsilon_j$ for all $j \neq k$, by isotopies in ${\bf D}_n$. Then we can assume $\rho(\eta_k)$ lies in the annulus
		$$
		{\bf D}_n \setminus \bigcup \{\varepsilon_j \mid j \neq k\},
		$$
		and is in fact (freely) isotopic to the core of this annulus as a closed curve, since an annulus has only one isotopy class of simple closed curve. It follows that $\rho(\eta_k)$ is isotopic to $\eta_k$ in ${\bf D}_n$ for all $k$, so $\rho$ acts trivially on $\pi_1({\bf D}_n)$. Since the higher homotopy groups of ${\bf D}_n$ are trivial, this implies $\rho$ is the identity braid.
	\end{proof}
	
	\subsection{Generalizations}
	
	Theorem \ref{thm:main_burau} can be extended in a couple directions. First, rather than the Burau representation, one could consider the Gassner representation of the pure braid group,
	$$
	\varphi_n : P_n \to GL_n(\Z[t_1^{\pm 1}, t_2^{\pm 1}, \dots, t_n^{\pm 1}]).
	$$
	Here $P_n$ is the {\em pure braid group on $n$ strands}, which is the kernel of the homomorphism $B_n \to S_n$ discussed in the proof of Theorem \ref{thm:bagr}. The Gassner representation admits a chopstick/noodle interpretation analogous to Lemma \ref{lem:reform}. Briefly, using the notation of Lemma \ref{lem:s_intersect}, let $\rho \in P_n$ be a pure braid, let $y$ be a point of $\rho(\eta_j) \cap \varepsilon_k$, and let $\gamma_y$ be a ``difference arc'' for $y$. Then we define $n$ sheet functions
	$$
	S_1, \dots, S_n : (\rho(\eta_j) \cap \varepsilon_k) \to \Z
	$$
	by
	$$
	S_r(y) = \gamma_y \cdot \ell_r,
	$$
	where $\ell_r$ is the $r$th cut arc. The (unreduced) Gassner representation can then be defined element-wise by
	$$
	(\varphi_n(\rho))_{jk} = \sum_{y \in (\rho(\eta_j) \cap \varepsilon_k)} (-1)^{\text{sgn}(y)}t_1^{S_1(y)}t_2^{S_2(y)} \cdots t_n^{S_n(y)}.
	$$
	We leave this derivation to the reader, and defer to \cite[Section 3.2]{bir74} for a more standard definition of the representation.
	
	With staightforward generalizations of the arguments in Section \ref{sec:hfs}, we obtain:
	
	\begin{thm}
		\label{thm:gassner}
		Let $\rho \in P_n$ be a pure braid, and let ${\bf j, k} \subset \{1, \dots, n\}$ be multi-indices of size $m$. Let $\lambda$ be indeterminate, and let $A$ be the square submatrix of $\varphi_n(\rho) - \lambda I_n$ which is the intersection of the rows ${\bf j}$ and the columns ${\bf k}$. Then there exists a $\Z^{n + 1}$-graded Heegaard Floer homology theory $\widehat{HFP}(\rho, {\bf j}, {\bf k})$ which is an invariant of $\rho$, and whose Poincar\'e polynomial is $\det(A)$.
	\end{thm}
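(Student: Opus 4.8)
The plan is to rerun the construction of \Cref{sec:theory} for the pure braid $\rho \in P_n$ essentially verbatim, the only change being that the single Alexander grading $A_1$---which records the total winding of a difference curve around $m(K)$---is replaced by $n$ separate gradings $A_1^{(1)}, \dots, A_1^{(n)}$, one per component of $m(K)$. Concretely, I would take the same bridge presentation $\mathcal{B}(\rho)$ of \Cref{def:bridge} and Heegaard diagram $\mathcal{H}(\rho, {\bf j}, {\bf k})$ of \Cref{def:heediag}, and set $\widehat{CFP}(\rho, {\bf j}, {\bf k}) = \widehat{CF}(\mathcal{H}(\rho, {\bf j}, {\bf k}))$ and $\widehat{HFP}(\rho, {\bf j}, {\bf k}) = \widehat{HF}(\mathcal{H}(\rho, {\bf j}, {\bf k}))$. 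Purity of $\rho$ enters exactly here: since $\rho \in P_n$, the strand of $m(K)$ through $z_r$ closes up through $w_r$, so $m(K)$ has precisely $n$ components $K_1, \dots, K_n$ with $z_r, w_r \in K_r$, and $H_1(S^3 \setminus m(K)) \cong \Z^n$ with basis the meridians $\mu_1, \dots, \mu_n$. For $r \in [n]$ let $h_1^{(r)} : H_1(S^3 \setminus m(K)) \to \Z$ be the projection $\mu_s \mapsto \delta_{rs}$, and define a relative grading $\widetilde{A}_1^{(r)}$ on $\widehat{CFP}(\rho, {\bf j}, {\bf k})$ by $\widetilde{A}_1^{(r)}(x) - \widetilde{A}_1^{(r)}(y) = h_1^{(r)}([\gamma_{x,y}])$, exactly as in (\ref{eq:hom_hom}); the braid-axis grading $A_2$ is defined as in (\ref{eq:mer_mer}), unchanged. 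The $\alpha$ and $\beta$ curves remain null-homologous in $S^3 \setminus m(K)$ and $S^3 \setminus U$, so these difference classes are well defined, and the proof of \Cref{lem:alex_disc}, applied componentwise, shows each $\widetilde{A}_1^{(r)}$ and $\widetilde{A}_2$ obstructs the differential and descends to homology. Together with the relative Maslov grading of \Cref{sec:hf} this yields the asserted multigrading.

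Next I would lift these relative gradings to absolute ones, mirroring \Cref{def:grading}, \Cref{lem:grading}, \Cref{def:grading_two} and \Cref{lem:grading_two}. The key input is the noodle/chopstick description of the Gassner representation recalled above, with its $n$ sheet functions $S_1, \dots, S_n$ satisfying $S_r(y) = \gamma_y \cdot \ell_r$ and
$$
(\varphi_n(\rho))_{jk} = \sum_{y \in \rho(\eta_j) \cap \varepsilon_k} \text{sgn}(y)\, t_1^{S_1(y)} \cdots t_n^{S_n(y)}
$$
(the coefficient here being $\text{sgn}(y) \in \{\pm 1\}$, as in \Cref{lem:reform}). For each $r$ I define a local grading $A_1^{(r),\text{loc}}$ on the intersection points of the $\alpha$ and $\beta$ curves by setting it to $0$ on anchor points and to $S_r(x')$ on a point $x \in \beta_j \cap \alpha_k$ identified via \Cref{lem:relating} with $x' \in \rho(\eta_j) \cap \varepsilon_k$, then put $A_1^{(r)}(x) = \sum_s A_1^{(r),\text{loc}}(x_s)$; the grading $A_2$ is lifted by the anchor-point count of \Cref{def:grading_two}. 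That $A_1^{(r)}$ lifts $\widetilde{A}_1^{(r)}$ is the proof of \Cref{lem:grading} with the single cut arc $\bar{\ell}_r$ replacing the sum $\sum_r \bar{\ell}_r$: because $\bar{\ell}_r$ runs parallel to the underbridge $u_r \subset K_r$ and is disjoint from the other underbridges, one has $h_1^{(r)}(e) = e \cdot \bar{\ell}_r$ for $e \in H_1(F)$, and \Cref{lem:s_intersect} (with the single arc $\ell_r$) gives $S_r(x') = \gamma_{x'} \cdot \ell_r$. That $A_2$ lifts $\widetilde{A}_2$ is \Cref{lem:grading_two} verbatim.

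Finally, the graded Euler characteristic is computed as in \Cref{sec:gradings} with $n$ variables in place of one: expanding $\det(A)$ as a signed sum over $\sigma \in S_m$ of products $\prod_s\big((\varphi_n(\rho))_{j_s k_{\sigma(s)}} - \lambda \delta_{j_s k_{\sigma(s)}}\big)$, substituting the Gassner formula, and using the bijection between the factors and the intersection points of $\alpha_{j_s} \cap \beta_{k_{\sigma(s)}}$ (the Kronecker delta being the $j_s$th anchor point) yields, after the same rearrangement as in the Burau proof,
$$
\det(A) = \sum_{x \in \mathbb{T}_\alpha \cap \mathbb{T}_\beta} \text{sgn}(x)\, t_1^{A_1^{(1)}(x)} \cdots t_n^{A_1^{(n)}(x)} \lambda^{A_2(x)};
$$
\Cref{lem:maslov} then allows $\text{sgn}(x)$ to be replaced by $(-1)^{M(x)}$ for a suitable absolute Maslov grading, identifying $\det(A)$ with the Poincar\'e polynomial of $\widehat{HFP}(\rho, {\bf j}, {\bf k})$. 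Invariance under isotopy of the $\alpha$ and $\beta$ curves---hence under all choices in the construction---follows as in the Burau case from \cite[Proposition 5.1]{ras03} and \cite[Theorem 7.3]{os04c}. The main point requiring genuine verification, rather than pure bookkeeping, is the componentwise refinement itself: that for every $r$---including $r \in {\bf k}$, where the basepoints $z_r, w_r$ have been surgered into a handle---the cut arc $\ell_r$ detects linking around the component $K_r$ and no other, and that $\widetilde{A}_1^{(r)}$ obstructs the differential. This is precisely where purity of $\rho$ is used, and it is handled exactly as the analogous point in Rasmussen's original bridge-diagram construction \cite{ras03}; once it is in place the remainder of the argument is a routine decoupling of the Burau computations into one copy per strand.
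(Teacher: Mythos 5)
Your proposal is correct and takes essentially the same approach as the paper, which only states that the theorem follows by ``straightforward generalizations'' of the Burau construction, replacing the single $\Z$-valued grading $A_1$ by a $\Z^n$-valued grading defined via difference classes in $H_1(S^3 \setminus m(K)) \cong \Z^n$ or, equivalently, via the $n$ sheet functions $S_1, \dots, S_n$. Your write-up fleshes out the details the paper leaves to the reader --- the component-wise cut-arc argument replacing the sum $\sum_r \bar\ell_r$ with a single $\bar\ell_r$, the Euler characteristic expansion, and the correct reading of the Gassner coefficient as $\text{sgn}(y)$ rather than the paper's typographical $(-1)^{\text{sgn}(y)}$ --- and explicitly flags the one substantive point (that $\ell_r$ detects linking with $K_r$ alone, including when $r \in {\bf k}$), which is indeed where purity is used and is the same point Rasmussen handles for bridge diagrams.

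One small bookkeeping note: you assert that the $n$ gradings $A_1^{(r)}$, together with $A_2$ and the Maslov grading, ``yield the asserted'' $\Z^{n+1}$-grading; that is $n+2$ gradings, not $n+1$. The discrepancy is inherited from the paper itself, whose Burau theorem is ``triply-graded'' (counting Maslov) while the Gassner theorem is ``$\Z^{n+1}$-graded'' (apparently not counting Maslov), so this is a defect of the paper's phrasing rather than of your argument; but it would be cleaner to state explicitly whether you are including the Maslov grading in the count.
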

	
	For any pure braid $\rho \in P_n$, the group $\widehat{HFP}(\rho, {\bf j}, {\bf k})$ is defined using the same Heegaard diagram $\mathcal{H}(\rho, {\bf j}, {\bf k})$ as in the construction of $\widehat{HFB}(\rho, {\bf j}, {\bf k})$. The only difference is that the grading
	$$
	A_1 : \widehat{CFB}(\rho, {\bf j}, {\bf k}) \to \Z
	$$
	is replaced by a multi-variable grading
	$$
	A : \widehat{CFB}(\rho, {\bf j}, {\bf k}) \to \Z^n
	$$
	defined using the Gassner representation. In terms of difference classes, for generators $x$ and $y$,
	$$
	A(x) - A(y) = [\gamma_{x,y}] \in H_1(S^3 - m(K)) \cong \Z^n,
	$$
	where $K$ is the $n$-component link which is the closure of $\rho$.
	
	A second, broader way to generalize Theorem \ref{thm:main_burau} is to consider any Alexander matrix coming from applying the Fox calculus to a Wirtinger presentation of a knot or link group. More precisely, let $K \subset S^3$ be a link, and $\mathcal{B}$ a bridge presentation of $K$ in $S^2$. From this bridge presentation, one obtains a Wirtinger presentation for $\pi_1(S^3 - K)$, which yields an Alexander matrix $A$. In the case of the bridge presentations considered in Section \ref{sec:bridges}, defined for a braid $\rho$, the Alexander matrix coincides with a codimension-one submatrix of $\psi_n(\rho) - I_n$. For any bridge presentation, this set-up is discussed in Rasmussen's original paper \cite[Section 3.2]{ras03}. With our construction, we obtain:
	
	\begin{prop}
		\label{prop:fox}
		Let $K \subset S^3$ be an $n$-component link. As discussed above, let $A'$ be an Alexander matrix coming from a bridge presentation of $K$, and let ${\bf j, k} \subset \{1, \dots, n\}$ be multi-indices of size $m$. Let $A$ be the submatrix of $A'$ determined by ${\bf j}$ and ${\bf k}$. Then there exists a relatively $\Z^n$-graded Heegaard Floer homology theory whose Poincar\'e polynomial is $\det(A) \in \Z[t_1^{\pm 1}, t_2^{\pm 1}, \cdots, t_n^{\pm 1}]$.
	\end{prop}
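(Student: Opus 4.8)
The plan is to carry out the construction of \Cref{sec:theory} with the braid bridge presentation $\mathcal{B}(\rho)$ replaced by the given presentation $\mathcal{B}$, noting that the braid structure entered there only through the noodle/chopstick identification of an $A_1$-weighted intersection count with an entry of $\psi_n(\rho)-\lambda I_n$ (Lemmas~\ref{lem:reform} and~\ref{lem:s_intersect}); this is replaced here by the classical identification of the abelianized Fox derivatives of a Wirtinger presentation with intersection numbers in the bridge sphere, which is the content of \cite[Section~3.2]{ras03}. Concretely, write $\mathcal{B}$ in the form of \Cref{def:bridge}, with overbridges $o_1,\dots,o_N$, underbridges $u_1,\dots,u_N$, and basepoints $z=\{z_i\}$, $w=\{w_i\}$, indexing the rows of the Alexander matrix $A'$ by overbridges (Wirtinger relations) and its columns by underbridges (Wirtinger generators). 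Given multi-indices ${\bf j},{\bf k}$ of size $m$, form a multi-pointed Heegaard diagram $\mathcal{H}(\mathcal{B},{\bf j},{\bf k})=(\Sigma,\alpha,\beta,p)$ exactly as in \Cref{def:heediag}: $\beta_{j_s}$ is the boundary of a regular neighborhood of $o_{j_s}$, each $\alpha_{k_s}$ is obtained by deleting small disks around $z_{k_s}$ and $w_{k_s}$, attaching a handle, and closing up the truncated underbridge $u_{k_s}$ through it, and $p=(z\setminus\{z_{k_s}\})\cup(w\setminus\{w_{k_s}\})$. No braid-axis basepoints $p_L,p_R$ are needed, since we now categorify a link invariant whose natural grading comes only from the components of $K$. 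That $\mathcal{H}(\mathcal{B},{\bf j},{\bf k})$ is a genuine multi-pointed Heegaard diagram in the sense of \Cref{sec:hf}---in particular, that the $\alpha$- and $\beta$-curves have the required homological independence---follows from the non-crossing conditions in the definition of a bridge presentation, just as for $\mathcal{H}(\rho,{\bf j},{\bf k})$. Declare the sought theory to be $\widehat{HF}(\mathcal{H}(\mathcal{B},{\bf j},{\bf k}))$.

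The relative $\Z^n$-grading is built as in \Cref{sec:hfk}: view $\Sigma$ as embedded in $S^3$ following \Cref{conv:embedded}, and for generators $x,y$ choose a difference curve $\gamma_{x,y}\subset\Sigma\subset S^3$ and set $\widetilde{A}(x)-\widetilde{A}(y)=[\gamma_{x,y}]\in H_1(S^3\setminus K)\cong\Z^n$. Since every $\alpha$- and $\beta$-curve is null-homologous in $S^3\setminus K$, this is independent of $\gamma_{x,y}$, and by \Cref{lem:alex_disc} it obstructs the differential and so descends to a relative $\Z^n$-grading on homology. Invariance of the resulting graded group under the choices in the construction (equivalently, under isotopies of the $\alpha$- and $\beta$-curves) follows exactly as in the invariance statement for $\widehat{HFB}$, i.e. from \cite[Proposition~5.1]{ras03} and \cite[Theorem~7.3]{os04c}.

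The remaining point is that the graded Euler characteristic equals $\det(A)$. Expand $\det(A)=\sum_{\sigma\in S_m}\text{sgn}(\sigma)\prod_{s=1}^m A_{j_s k_{\sigma(s)}}$, where $A_{j_s k_{\sigma(s)}}$ is the abelianization of the Fox derivative of the Wirtinger relation at $o_{j_s}$ with respect to the meridian at $u_{k_{\sigma(s)}}$. By the geometric realization of Fox calculus \cite[Section~3.2]{ras03}---the general analogue of \Cref{lem:reform} and \Cref{lem:s_intersect}---this entry is a signed count $\sum_{x\in\alpha_{k_{\sigma(s)}}\cap\beta_{j_s}}\text{sgn}(x)\,t^{A^{\text{loc}}(x)}$ of intersection points in the bridge sphere, with each local weight $A^{\text{loc}}(x)\in\Z^n$ recording the difference class of $x$ relative to a fixed reference point (the analogue of the anchor point of \Cref{def:anchor}). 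Substituting these expansions into the determinant and matching the factors of the product with tuples $(x_1,\dots,x_m)$, $x_s\in\alpha_{k_{\sigma(s)}}\cap\beta_{j_s}$, exactly as in \Cref{sec:gradings}, one finds $\det(A)=\sum_{x\in\mathbb{T}_\alpha\cap\mathbb{T}_\beta}\text{sgn}(x)\,t^{A(x)}$, where $A$ is the total $\Z^n$-grading. Finally \Cref{lem:maslov} lets us replace $\text{sgn}(x)$ by $(-1)^{M(x)}$ up to a global sign, so $\det(A)$ is the Poincar\'e polynomial of the theory in the sense of \Cref{thm:main_burau} (both being well defined only up to a unit of $\Z[t_1^{\pm1},\dots,t_n^{\pm1}]$, matching the relative nature of the gradings).

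The main obstacle is the identification invoked in the previous paragraph: pinning down the signs and the component-by-component bookkeeping so that the monomial attached to a point $x\in\alpha_k\cap\beta_j$ is exactly $t^{[\gamma]}$ for the difference class used to define the relative $\Z^n$-grading. For braid bridge presentations this comparison was organized through the noodle/chopstick calculus, where the winding of each arc around the punctures was encoded by the sheet function; for a general bridge presentation one must instead track directly how the underbridges $u_{k_s}$ thread around the basepoints $z\cup w$, which is notationally heavier but structurally identical, and is exactly the discussion in \cite[Section~3.2]{ras03}. Everything else---admissibility of $\mathcal{H}(\mathcal{B},{\bf j},{\bf k})$, well-definedness of the relative grading, and invariance---is a routine transcription of the arguments already given for $\widehat{HFB}$.
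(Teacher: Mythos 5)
Your proposal is correct and takes essentially the same approach the paper intends; the paper states \Cref{prop:fox} without an explicit proof, pointing to Rasmussen's construction \cite[Section 3.2]{ras03} and to the arguments already given for $\widehat{HFB}$, and your write-up is a faithful transcription of that outline, with the right adjustments (dropping $p_L,p_R$, replacing the noodle/chopstick sheet function by Rasmussen's geometric realization of the Fox derivative, replacing the $A_1$ grading with the full difference class in $H_1(S^3\setminus K)\cong\Z^n$). You also correctly restrict the invariance claim to isotopies of the $\alpha$- and $\beta$-curves rather than changes of bridge presentation, matching the paper's remark that these groups are generally not topological invariants.

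One small notational wrinkle worth flagging, which is in the paper's statement rather than your proof: the proposition writes ${\bf j,k}\subset\{1,\dots,n\}$ with $n$ the number of link components, but the Alexander matrix $A'$ from a bridge-$N$ presentation is an $N\times N$ (or $N\times(N-1)$, etc.) matrix, so the multi-indices should range over $\{1,\dots,N\}$ with $N$ the bridge number; you implicitly and correctly read it this way by using $N$ for the number of bridges.
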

	
	The issue with the groups of Proposition \ref{prop:fox} is that, in general, they depend on a choice of bridge presentation, and are not topological invariants in any meaningful sense. Nonetheless, it is possible that link invariants could be extracted from them.
	
	\section{Background on Quantum $\mathfrak{gl}(1 \vert 1)$}
	\label{sec:qs}
	
	\subsection{The quantum invariant $U_q(\mathfrak{gl}(1 \vert 1))$}
	\label{sec:q_backg}
	
	In this section, we begin moving toward this paper's second objective: using the theory developed in Section \ref{sec:theory} to relate knot Floer homology and the $U_q(\mathfrak{gl}(1 \vert 1))$ Reshitikhin-Turaev invariant. As the introduction discusses, our motivation here is to help unify the symplectic and quantum group theoretic perspectives for knot homology theories.
	
	Broadly speaking, Reshitikhin-Turaev invariants associate linear maps to tangles in the unit cube \cite{rt90}.\footnote{These should not be confused with Witten-Reshitkhin-Turaev invariants of three-manifolds, which are closely related.} {\em Quantum groups} are certain non-commutative Hopf algebras with extra structure well-suited for this setting: for every quantum group (more or less), there is an associated Reshitkhin-Turaev invariant. In what follows, we give a high-level overview of the Reshikhin-Turaev invariant associated to the quantum group $U_q(\mathfrak{gl}(1 \vert 1))$, the quantized universal enveloping algebra of $\mathfrak{gl}(1 \vert 1)$, informally referred to as quantum $\mathfrak{gl}(1 \vert 1)$. Many of the statements are applicable to Reshitikhin-Turaev invariants of other quantum groups as well; we refer the reader to \cite{km91} for an overview of Reshitikhin-Turaev invariants, and \cite{sar15} for details specific to $U_q(\mathfrak{gl}(1 \vert 1))$.
	
	A {\em tangle} $T$ is a one-manifold properly embedded in the unit cube $I^3 = [0,1]^3$, such that
	$$
	\partial T \subset I \times \{1/2\} \times \{0,1\}.
	$$
	We denote $\partial T \cap (I \times \{1/2\} \times \{0\})$ and $\partial T \cap (I \times \{1/2\} \times \{1\})$, the bottom and top boundaries of $T$, by $\partial T_-$ and $\partial T_+$ respectively, and we call $T$ an {\em $(m,n)$-tangle} if $|\partial T_-| = m$ and $|\partial T_+| = n$. Tangles are considered up to ambient isotopies that fix $I \times \{1/2\} \times \{0,1\}$ set-wise.
	
	We view oriented tangles as the set of morphisms in a category $\mathcal{T}$, whose objects are (isotopy classes of) finite sets of signed points in $I$. An oriented tangle $T$ is then a morphism from $-\partial T_- \subset I \times \{1/2\} \times \{0\}$ to $\partial T_+ \subset I \times \{1/2\} \times \{1\}$, with the signs of boundary points induced by the orientation, and $-\partial T_-$ indicating $\partial T_-$ with the signs of all points switched. Given two tangles $T_1$ and $T_2$ with $(\partial T_1)_+ \cong (\partial T_2)_-$, we form the composition $T_2 \circ T_1$ by stacking the tangles vertically with $T_2$ on top. There is also a tensor product on $\mathcal{T}$ given by juxtaposing tangles horizontally.
	
	Separately, let $V$ denote a two-dimensional super vector space over $\C(q)$ with even generator $v_0$ and odd generator $v_1$. By a {\em super vector space}, we mean a vector space equipped with a $\Z_2$-grading as described. We impose a super structure on the dual $V^* = \text{Hom}(V, \C(q))$ as well, such that if $\{v^*_0, v^*_1\}$ is the basis dual to $\{v_0, v_1\}$, then $v^*_0$ is even and $v^*_1$ is odd. Likewise, any tensor product of copies of $V$ and $V^*$ has a natural super structure induced by the structure on $V$. (The empty tensor product, $\C(q)$, is even.) Let $\mathcal{V}$ be the category whose objects are tensor products of $V$ and $V^*$, and whose morphisms are grading-preserving linear maps.
	
	The super algebra $U_q(\mathfrak{gl}(1\vert 1))$ admits an irreducible representation into $\text{Aut}(V)$, called the {\em vector representation}, and the {\em Reshetikhin-Turaev invariant} associated to this representation of $U_q(\mathfrak{gl}(1 \vert 1))$ is a monoidal functor
	$$
	Q : \mathcal{T} \to \mathcal{V}.
	$$
	The functor $Q$ sends a single positively signed point to $V$, a negatively signed point to $V^*$, and a sequence of points to the corresponding tensor product.\footnote{When defining $Q$, it is necessary to put framings on the tangles in $\mathcal{T}$. The framings will not affect our calculations here, however, so we omit them from the discussion.}
	
	We note three special cases of the functor $Q$. First, we consider the restriction of $Q$ to those tangles which are $n$-strand geometric braids, oriented so that all strands run from $\partial T_-$ to $\partial T_+$. In this case, $Q$ yields a braid group representation
	\begin{equation}
		\label{eq:q_braid}
		Q : B_n \to \text{Aut}(V^{\otimes n})
	\end{equation}
	for all $n$. Second, we can focus on a tangle which has no boundary points, i.e.~an oriented link $K \subset I^3$. Then $Q(K) \in \text{Aut}(\C(q))$, and it can be shown that $Q(K)$ is always the zero map in this case \cite[Proposition 4.4]{sar15}.
	
	In spite of the preceeding fact, there is a way to obtain nontrivial link invariants using $Q$. Let $T$ be a $(1,1)$-tangle; then $Q(T)$ is a morphism
	$$
	Q(T) : V \to V.
	$$
	Since the vector representation of $U_q(\mathfrak{gl}(1\vert 1))$ is irreducible, $Q(T) = c \cdot \text{Id}$ for some scalar $c \in \C(q)$ (cf.~\cite[Lemma 3.9]{km91}). This scalar is an invariant of the oriented link formed by closing up $T$ in $S^3$ \cite[Theorem 4.6]{sar15}.
	
	\begin{defn}
		Let $K \subset S^3$ be an oriented link, and $T$ a $(1,1)$-tangle with closure $K$. Then we define $\widehat{Q}(K) \in \C(q)$ to be the unique value such that $Q(T) = \widehat{Q}(K) \cdot \text{Id}$.
	\end{defn}
	
	The invariant $\widehat{Q}(K)$ coincides with the Alexander polynomial of $K$.
	
	\begin{prop}[{\cite[Theorem 4.10]{sar15}}]
		\label{prop:q_alex}
		Let $K \subset S^3$ be an oriented link. Then
		$$
		\widehat{Q}(K) = \Delta_K(t)\vert_{q = t^{1/2}},
		$$
		where $\Delta_K(t) \in \C[t^{1/2},t^{-1/2}]$ is the symmetrized Alexander polynomial of $K$.
	\end{prop}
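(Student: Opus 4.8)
The plan is to show that $\widehat{Q}$ is an invariant of oriented links obeying the Conway skein relation with the standard normalization, and then to invoke uniqueness of the Alexander--Conway polynomial. First I would make the Reshetikhin--Turaev data explicit: the $R$-matrix $R \colon V \otimes V \to V \otimes V$ of the vector representation of $U_q(\mathfrak{gl}(1\vert 1))$, written in the basis $\{v_0 \otimes v_0, v_0 \otimes v_1, v_1 \otimes v_0, v_1 \otimes v_1\}$; the (co)evaluation morphisms assigned to cups and caps; and the action of the ribbon/twist element on $V$. Working with the graded flip $\tau$, one checks that the braiding $\check R = \tau \circ R$ satisfies a quadratic relation of Hecke type,
$$
(\check R - q\,\mathrm{Id})(\check R + q^{-1}\mathrm{Id}) = 0,
$$
possibly after rescaling $\check R$ by a unit; I expect pinning down the exact eigenvalues, and any sign coming from the superstructure, to require some care.

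Second, I would feed this local relation into the RT calculus. Applying $Q$ to the three tangles that agree outside a ball and differ inside by the skein triple $L_+, L_-, L_0$, the quadratic relation yields
$$
\widehat{Q}(L_+) - \widehat{Q}(L_-) = (q - q^{-1})\,\widehat{Q}(L_0),
$$
and applying $Q$ to the identity $(1,1)$-tangle gives $\widehat{Q}$ of the unknot equal to $1$. Here the passage to $(1,1)$-tangle closures is essential: the quantum dimension of $V$ is zero, so $Q$ of a genuinely closed component vanishes (as the excerpt already records), but the closure of a $(1,1)$-tangle is a scalar that survives. Substituting $q = t^{1/2}$ turns the displayed identity into the defining skein relation of the symmetrized Alexander polynomial.

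Third, I would invoke the classical fact that an oriented-link invariant satisfying the Conway skein relation and taking value $1$ on the unknot is unique and equals $\Delta_K(t)$; this gives $\widehat{Q}(K) = \Delta_K(t)\vert_{q = t^{1/2}}$. The main obstacle, besides the bookkeeping in the first two steps, is to confirm that the renormalized RT invariant of a $(1,1)$-tangle is genuinely independent of its presentation---in particular, framing-independent---so that $\widehat{Q}(K)$ is well defined; this is precisely the feature that lets a quantum group with a vanishing-quantum-dimension representation nonetheless produce a link invariant. An alternative route, closer to the rest of the paper, would bypass the skein relation entirely: use the fact (recalled in the introduction) that the $U_q(\mathfrak{gl}(1\vert 1))$ braid representation is, up to normalization, assembled from exterior powers of the Burau representation, express $\widehat{Q}(K)$ for $K$ the closure of a braid $\rho$ as a (partial) Markov-type trace of $Q(\rho)$, and match it against the classical Burau formula for $\Delta_K$ (cf.\ Morton's theorem referenced in \Cref{sec:propss}); the difficulty then migrates to handling Markov stabilization and the trace-versus-determinant normalization.
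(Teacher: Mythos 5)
The paper does not give its own proof of this proposition; it is quoted directly from Sartori \cite[Theorem~4.10]{sar15}, and the author even flags that the result will \emph{not} be assumed in the remainder of the paper. So there is no in-text argument to compare against. That said, your primary route---verifying that the braiding $\check R$ on $V\otimes V$ satisfies a Hecke-type quadratic, deducing $\widehat Q(L_+)-\widehat Q(L_-)=(q-q^{-1})\widehat Q(L_0)$ and $\widehat Q(\mathrm{unknot})=1$, and then invoking uniqueness of the Conway-normalized Alexander polynomial---is exactly the standard skein-theoretic argument that Sartori carries out, so it is correct and matches the cited source. Your alternative route is actually closer in spirit to what this paper does downstream: Lemma~\ref{lem:str}, Corollary~\ref{cor:str_three}, and Proposition~\ref{prop:expanded} express $\widehat Q(K)$ as a (partial) super trace of $Q(\rho)$ and then re-expand it in terms of the exterior powers of the Burau representation, and Theorem~\ref{thm:clear} matches the resulting state sum term-by-term with generators of $\widehat{CF}(\mathcal H(m(K)))$. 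Combined with the standard fact that knot Floer homology categorifies the Alexander polynomial, that bijection reproves $\widehat Q(K)=\Delta_K$; the skein route buys you independence from any Heegaard Floer input, while the Burau/trace route buys you the geometric refinement (a weight-preserving bijection of generators) that the rest of the paper exploits. One small caution on your first route: the eigenvalues of $\check R$ and the sign conventions for $L_\pm$ are sensitive to whether one uses the graded flip and to the framing correction, so as you anticipate, the exact form $(\check R - q\,\mathrm{Id})(\check R + q^{-1}\mathrm{Id})=0$ should be pinned down rather than asserted; a mismatch there silently changes the skein relation by $t\leftrightarrow t^{-1}$ or an overall sign, which the symmetry of $\Delta_K$ masks but which matters for the normalization claim $\widehat Q(\mathrm{unknot})=1$.
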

	
	We will not assume Proposition \ref{prop:q_alex} in any of what follows. To emphasize this, we will continue to refer to the above invariant as $\widehat{Q}(K)$ rather than $\Delta_K$ for a given link $K$.
	
	\subsection{The $U_q(\mathfrak{gl}(1 \vert 1))$ invariant and the Burau repesentation}
	
	It is a classical fact that the braid representation (\ref{eq:q_braid}) can be otained from the Burau representation. As in Section \ref{sec:ub}, let
	$$
	\psi_n : B_n \to \text{GL}_n(\Z[t,t^{-1}])
	$$
	be the full Burau representation of the $n$-strand braid group. Let $W$ be an $n$-dimensional vector space over $\C(q)$, with basis $w_1, \dots, w_n$, and set $q = t^{1/2}$. Extending scalars, we think of $\psi_n$ as a representation
	$$
	\psi_n : B_n \to \text{Aut}(W).
	$$
	
	Let $\bigwedge W$ denote the exterior algebra of $W$, 
	$$
	\bigwedge W = \bigoplus_{m = 0}^n \bigwedge \nolimits^m W,
	$$
	and given a (possibly empty) multi-index ${\bf j} = \{j_1, \dots, j_m\} \subset \{1, \dots, n\}$, let 
	$$
	w_{\bf j} = w_{j_1} \wedge \dots \wedge w_{j_m} \in \bigwedge \nolimits^m W.
	$$
	Then $\bigwedge W$ is a $2^n$-dimensional vector space over $\C(q)$, with basis
	\begin{equation}
		\label{eq:wedge_basis}
		\{w_{\bf j} \mid {\bf j} \subset \{1, \dots ,n\}\},
	\end{equation}
	and $\bigwedge W$ can be given a super structure by declaring elements of $\bigwedge \nolimits^m W$ even if and only if $m$ is. The Burau representation $\varphi_n$ induces a representation
	$$
	\psi^\wedge_n : B_n \to \text{Aut}(\bigwedge W).
	$$
	
	For any multi-index ${\bf j} = \{j_1, \dots, j_m\} \subset [n]$, let $\vec{s}({\bf j}) \in \{0,1\}^n$ denote the binary sequence with a one in the $j$th coordinate if and only if $j \in {\bf j}$. Additionally, for any such sequence $\vec{s} = (s_1, \dots, s_n) \in \{0,1\}^n$, let
	$$
	v_{\vec{s}} = v_{s_1} \otimes \cdots \otimes v_{s_n} \in V^{\otimes n},
	$$
	where $V$ is the super vector space of the previous section. Then the map
	\begin{equation}
		\label{eq:sup_isom}
		\bigwedge W \to V^{\otimes n},
	\end{equation}
	given by $w_{\bf j} \mapsto v_{\vec{s}({\bf j})}$, is an isomorphism of super vector spaces. Via this isomorphism, we think of $\psi^\wedge_n$ as a function
	$$
	\psi^\wedge_n : B_n \to \text{Aut}(V^{\otimes n}).
	$$
	
	We have the following result.
	
	\begin{prop}[{\cite[Section 5]{kasa91}}]
		\label{prop:isom}
		Fix a braid $\rho \in B_n$, and let exp$(\rho) \in \Z$ denote the braid exponent, i.e.~the number of positive crossings minus the number of negative crossings in any geometric braid diagram of $\rho$. Then there exists a diagonal matrix $A$, i.e.~a rescaling of basis elements, such that
		\begin{equation}
			\label{eq:isom}
			A^{-1}Q(\rho)A = q^{-\text{exp}(\rho)} \psi^\wedge_n(\rho).
		\end{equation}
	\end{prop}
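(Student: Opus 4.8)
The plan is to reduce \eqref{eq:isom} to a verification on the standard generators $\sigma_1,\dots,\sigma_{n-1}$ of $B_n$. The braid exponent is a homomorphism $\mathrm{exp}\colon B_n\to\Z$, so $\rho\mapsto q^{-\mathrm{exp}(\rho)}\psi^\wedge_n(\rho)$ is a representation of $B_n$, and so is $\rho\mapsto A^{-1}Q(\rho)A$ for any fixed invertible $A$; two representations of $B_n$ coincide as soon as they agree on the $\sigma_j$. Since $\mathrm{exp}(\sigma_j)=1$, it therefore suffices to exhibit a single diagonal matrix $A$ with
$$
A^{-1}Q(\sigma_j)A \;=\; q^{-1}\,\psi^\wedge_n(\sigma_j)\qquad\text{for all } j=1,\dots,n-1,
$$
and such an $A$ then automatically works for every $\rho$ at once, which is slightly stronger than the statement asks.

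First I would compute both sides of this generator identity by hand. On the quantum side, $Q(\sigma_j)=\mathrm{Id}_V^{\otimes(j-1)}\otimes\check R\otimes\mathrm{Id}_V^{\otimes(n-j-1)}$, where $\check R\colon V\otimes V\to V\otimes V$ is the braiding of the vector representation of $U_q(\mathfrak{gl}(1\vert1))$; in the ordered basis $(v_0\otimes v_0,\,v_1\otimes v_0,\,v_0\otimes v_1,\,v_1\otimes v_1)$ it is the standard $4\times4$ matrix of \cite{sar15}, diagonal except for a single $2\times2$ block on $\mathrm{span}(v_1\otimes v_0,\,v_0\otimes v_1)$. On the Burau side, \eqref{eq:explicit_burau} shows $\psi_n(\sigma_j)$ is the identity outside the coordinates $j$ and $j+1$, so the induced map $\psi^\wedge_n(\sigma_j)$ acts on a basis vector $w_{\bf k}$ only according to $\#(\{j,j+1\}\cap{\bf k})$: applying $\bigwedge(-)$ to the block $\bigl[\begin{smallmatrix}1-t & 1\\ t & 0\end{smallmatrix}\bigr]$ gives $w_{\bf k}\mapsto w_{\bf k}$ if neither index lies in ${\bf k}$, $w_{\bf k}\mapsto -t\,w_{\bf k}$ if both do, $w_{\bf k}\mapsto(1-t)w_{\bf k}+t\,w_{{\bf k}'}$ if only $j$ does (with ${\bf k}'=({\bf k}\setminus\{j\})\cup\{j+1\}$), and $w_{\bf k}\mapsto w_{{\bf k}''}$ if only $j+1$ does. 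Transporting these four cases through the super isomorphism \eqref{eq:sup_isom}, substituting $t=q^2$, and multiplying by $q^{-1}$, one checks that the resulting $4\times4$ matrix has the same diagonal entries as $\check R$ (this is exactly what fixes the normalizing factor $q^{-\mathrm{exp}(\rho)}$), while its off-diagonal $2\times2$ block differs from that of $\check R$ by conjugation by a diagonal matrix. Taking $A$ to be the diagonal matrix that rescales $w_{\bf j}$ by the monomial $q^{-\sum_{i\in{\bf j}}i}$ (up to a global unit) then does the job, and the same $A$ works for all $j$ simultaneously, because conjugation by $A$ only ever rescales the block relating two basis vectors whose index sets differ precisely in the coordinates $j$ and $j+1$, by a factor determined solely by those two coordinates.

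The main obstacle is getting all of the conventions to line up so that the two $2\times2$ blocks genuinely agree: one must fix the sign convention on the crossing $\sigma_j$ (whether it maps to $\check R$ or $\check R^{-1}$, and whether $q$ or $q^{-1}$ appears), the Koszul signs built into the super-category $\mathcal V$, and the signs implicit in the identification \eqref{eq:sup_isom} of $\bigwedge W$ with $V^{\otimes n}$, and then check that all of these are consistent with the $-t$ produced by $\bigwedge(-)$ of the lower $2\times2$ block. Once the conventions are pinned down, comparing the blocks and reading off $A$ is a short computation, and \eqref{eq:isom} for general $\rho$ follows from the reduction to generators above. This is in essence the computation of \cite[Section 5]{kasa91}.
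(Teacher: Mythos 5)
Your approach---reduce to the generators $\sigma_1,\dots,\sigma_{n-1}$ using that $\mathrm{exp}$ is a homomorphism, compute the $R$-matrix block on the quantum side and the wedge-power of the $2\times 2$ Burau block on the Burau side, and exhibit a diagonal $A$ making the two blocks agree---is exactly the verification-on-generators argument underlying the paper's citation to \cite[Section~5]{kasa91} and the comparison in \cite[Definition~2.2.3]{man19} mentioned in the remark after the proposition. One caveat on conventions, which you flag in general but which is worth pinning down since the paper calls it out explicitly: with the paper's convention that $\sigma_j$ is a \emph{clockwise} half-twist, the remark notes that the standard generators contribute \emph{negatively} to the braid exponent, so $\mathrm{exp}(\sigma_j) = -1$, not $+1$, and the generator identity to check is $A^{-1}Q(\sigma_j)A = q^{+1}\psi^\wedge_n(\sigma_j)$; accordingly it is $Q(\sigma_j)$ that should be compared against the \emph{inverse} of the matrix in (\ref{eq:explicit_burau}). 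This shifts the exact form of your proposed rescaling $A$ by some global monomial but does not change the structure of the argument.
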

	
	\begin{rmk}
		In defining the invariant $\widehat{Q}$, we've followed the conventions used by Sartori \cite{sar15} and Manion \cite{man19}. To verify Proposition \ref{prop:isom}, compare the {\em inverse} of the matrix in (\ref{eq:explicit_burau}) with the first matrix in \cite[Definition 2.2.3]{man19}. It is necessary to take the inverse because we define the Burau representation using clockwise twists, while a positive crossing in Manion's set-up goes counter-clockwise. (This means, unfortunately, that our standard generators $\sigma_i$ in Section \ref{sec:ub} contribute negatively to the braid exponent!) Kauffman and Saleur use different conventions \cite{kasa91}, but their version of Proposition \ref{prop:isom} ultimately looks the same.
	\end{rmk}
	
	\subsection{A state sum formulation of $\widehat{Q}$}
	
	It is common in quantum topology to express Reshetikhin-Turaev invariants as {\em state sums}, which here are sums over elements of matrices. To this end, we introduce some notation. As above, for a sequence $\vec{s} = (s_1, \dots, s_n) \in \{0,1\}^n$ let
	$$
	v_{\vec{s}} = v_{s_1} \otimes \cdots \otimes v_{s_n} \in V^{\otimes n}.
	$$
	Then the set $\{v_{\vec{s}} \mid \vec{s} \in \{0,1\}^n\}$ is a standard basis for $V^{\otimes n}$. By definition, $v_{\vec{s}}$ is even if and only if $\vec{s}$ contains an even number of ones, and we write $|\vec{s}|$ to indicate the number of ones in $\vec{s}$. (Thus, for a multi-index ${\bf j}$, $|{\bf j}| = |\vec{s}({\bf j})|$.) Additionally, for any linear map $f : V^{\otimes n} \to V^{\otimes n}$ and sequences $\vec{s}, \vec{t} \in \{0,1\}^n$, we use $f^{\vec{s}}_{\vec{t}} \in \C(q)$ to denote the coefficient of $v_{\vec{t}}$ in $f(v_{\vec{s}})$. Equivalently, the $f^{\vec{s}}_{\vec{t}}$ are determined by the equation
	\begin{equation}
		\label{eq:coeffs}
		f(v_{\vec{s}}) = \sum_{\vec{t} \in \{0,1\}^n} f^{\vec{s}}_{\vec{t}} v_{\vec{t}}.
	\end{equation}
	With this notation, if a link $K$ is the closure of a geometric braid $\rho$, we can express $\widehat{Q}(K)$ in terms of $Q(\rho)$.
	
	\begin{lemma}
		\label{lem:str}
		Let $\rho \in B_n$ be a geometric braid whose closure is a link $K \subset S^3$, and let $Q(\rho) : V^{\otimes n} \to V^{\otimes n}$ be the image of $\rho$ under $Q$. Then
		\begin{equation}
			\label{eq:str}
			\widehat{Q}(K) = q^{n-1} \sum_{\vec{s} \in \{0,1\}^{n - 1}} (-1)^{|\vec{s}|} Q(\rho)_{\{0\} \times \vec{s}}^{\{0\} \times \vec{s}},
		\end{equation}
		where $\{0\} \times \vec{s}$ indicates the sequence $\vec{s}$ with a zero appended on the left.
	\end{lemma}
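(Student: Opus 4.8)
The plan is to unwind the definition of $\widehat{Q}(K)$ in terms of the Reshetikhin--Turaev functor $Q$ applied to an explicit $(1,1)$-tangle, and then evaluate the resulting scalar directly in the category $\mathcal{V}$ of super vector spaces. First I would fix a convenient $(1,1)$-tangle $T$ with closure $K$: take the geometric braid $\rho$ and close off the strands $2, \dots, n$ by nested, unknotted arcs on the right, leaving strand $1$ open. Then $Q(T) : V \to V$, and since the vector representation is irreducible we have $Q(T) = \widehat{Q}(K) \cdot \mathrm{Id}$, so $\widehat{Q}(K) = Q(T)^{\{0\}}_{\{0\}}$ in the notation of (\ref{eq:coeffs}) (equivalently, the coefficient of $v_0$ in $Q(T)(v_0)$).

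Next I would write $Q(T)$ as a composition. Closing strand $j$ corresponds, at the bottom of the braid, to inserting a coevaluation morphism $\C(q) \to V \otimes V^*$ and, at the top, an evaluation morphism $V^* \otimes V \to \C(q)$ coming from the ribbon structure of $U_q(\mathfrak{gl}(1 \vert 1))$; the references \cite{sar15, km91} give these maps explicitly in the bases $\{v_0, v_1\}$ and $\{v_0^*, v_1^*\}$. Because each such structure map pairs a basis vector with its own dual, expanding $Q(T)(v_0)$ in the standard basis collapses to a sum over sequences $\vec{s} \in \{0,1\}^{n-1}$ labelling the closed strands, where the $\vec{s}$-term is $Q(\rho)^{\{0\}\times\vec{s}}_{\{0\}\times\vec{s}}$ times a product of weights, one per closed strand. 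The weight attached to a closed strand carrying the label $s_j$ is the corresponding ``pivotal'' coefficient of the $U_q(\mathfrak{gl}(1\vert 1))$ duality, and the key computation is to check that, with the normalization of $Q$ used here, this weight equals $(-1)^{s_j} q$; multiplying over $j$ yields $q^{n-1}(-1)^{|\vec{s}|}$, and summing gives (\ref{eq:str}). One must also track the Koszul signs that appear when an evaluation morphism is slid past the intervening tensor factors, and verify that these combine with the pivotal signs to produce precisely the stated factor $(-1)^{|\vec s|}$.

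The main obstacle is bookkeeping rather than conceptual: pinning down the exact form of the evaluation and coevaluation morphisms (including the powers of $q$ and the super signs, which are sensitive to the left/right asymmetry of cups and caps in a ribbon category) under the conventions fixed in \Cref{sec:q_backg}, and confirming that all Koszul signs and pivotal weights conspire to give the clean factor $q^{n-1}(-1)^{|\vec{s}|}$ with no leftover powers of $q$. A helpful consistency check, which I would mention but not logically rely on, is \Cref{prop:isom}: since $A$ is diagonal it does not affect diagonal matrix entries, so $Q(\rho)^{\vec{s}}_{\vec{s}} = q^{\text{exp}(\rho)} (\psi^\wedge_n(\rho))^{\vec{s}}_{\vec{s}}$, which exhibits the right-hand side of (\ref{eq:str}) as a signed (super)trace of an exterior power of the Burau matrix --- matching the classical expression for the Alexander polynomial and thereby corroborating the normalization. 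The lemma itself, however, follows purely from the categorical computation of $Q(T)^{\{0\}}_{\{0\}}$.
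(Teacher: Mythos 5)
Your proposal follows essentially the same route as the paper's proof: express $\widehat{Q}(K) = Q(T)_0^0$ for the $(1,1)$-tangle $T$ obtained by closing the rightmost $n-1$ strands, decompose $T$ as the braid sandwiched between cups and caps, and compute the cup/cap coefficients to collapse the state sum to the diagonal entries $Q(\rho)^{\{0\}\times\vec{s}}_{\{0\}\times\vec{s}}$ with weight $(-1)^{|\vec{s}|}q^{n-1}$. The paper organizes this as an explicit composition $T_{\max}\circ T_{\mathrm{mid}}\circ T_{\min}$ and records the cup/cap coefficients in closed form (its equations~(\ref{eq:min}) and~(\ref{eq:max}), with the signs and $q$-powers all arising from the caps), but the substance is the same as your pivotal-weight bookkeeping.
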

	
	\begin{rmk}
		\label{rmk:str}
		Lemma \ref{lem:str} is well known to experts, and is not particular to $U_q(\mathfrak{gl}(1 \vert 1))$. A more general version of the identity holds for any ribbon category---see, for example, \cite[Section 2.1.4]{my18}, for a similar calculation with $U_q(\mathfrak{sl}(2))$. The specific formula (\ref{eq:str}) appears in a slightly different form as equation (2.8) in Kauffman and Saleur's classic paper \cite{kasa91}, motivated by statistical mechanics. It is closely related to taking the {\em super trace} of the map $Q(\rho)$, which is the trace of the even part minus the trace of the odd part.
	\end{rmk}
	
	\begin{proof}[Proof of Lemma \ref{lem:str}]
		We sketch the proof. First, we represent $K$ as a $(1,1)$-tangle $T$ by closing the $n - 1$ rightmost strands of the geometric braid $\rho$, as on the left side of Figure \ref{fig:state_sum}. Then $Q(T) : V \to V$ is a scalar multiple of the identity map, and $Q(T)_0^0 = Q(T)_1^1 = \widehat{Q}(K)$ by definition. We compute $Q(T)_0^0$ by writing $T$ as a composition of three tangles $T_\text{max} \circ T_\text{mid} \circ T_\text{min}$, as on the right side of Figure \ref{fig:state_sum}, where:
		\begin{itemize}
			\item $T_\text{min}$ is a single vertical strand, with $n - 1$ nested ``cups'' to the right of it.
			\item $T_\text{mid}$ is the geometric braid $\rho$, with $n - 1$ parallel vertical strands to the right of it.
			\item $T_\text{max}$ is a single vertical strand, with $n - 1$ nested ``caps'' to the right of it.
		\end{itemize}
		Further, note that $T_\text{mid} = \rho \otimes {\bf 1}_{n - 1}$, where $\rho$ is the geometric braid and ${\bf 1}_{n - 1}$ is the identity braid on $n - 1$ strands. Since the functor $Q$ respects the tensor product, we have
		$$
		Q(T) = Q(T_\text{max}) \circ (Q(\rho) \otimes \text{Id}_{V^{\otimes (n - 1)}}) \circ Q(T_\text{min}),
		$$
		and by basic linear algebra,
		\begin{align}
		\nonumber Q(T)_0^0 &= \sum_{\vec{r}, \vec{t} \in \{0,1\}^{2n - 1}} Q(T_\text{max})^{\vec{t}}_0 \cdot (Q(\rho) \otimes \text{Id}_{V^{\otimes (n - 1)}})_{\vec{t}}^{\vec{r}} \cdot Q(T_\text{min})^0_{\vec{r}} \\
		\nonumber &= \sum_{\vec{s}_1, \vec{s}_2 \in \{0,1\}^n} \sum_{\vec{t}_1, \vec{t}_2 \in \{0,1\}^{n - 1}} Q(T_\text{max})^{\vec{s}_2 \times \vec{t}_2}_0 \cdot Q(\rho)_{\vec{s}_2}^{\vec{s}_1} \cdot {\text{Id}_{V^{\otimes (n - 1)}}}_{\vec{t}_2}^{\vec{t}_1} \cdot Q(T_\text{min})^0_{\vec{s}_1 \times \vec{t}_1} \\
		\nonumber&= \sum_{\vec{s}_1, \vec{s}_2 \in \{0,1\}^n} \sum_{\vec{t}_1, \vec{t}_2 \in \{0,1\}^{n - 1}} Q(T_\text{max})^{\vec{s}_2 \times \vec{t}_2}_0 \cdot Q(\rho)_{\vec{s}_2}^{\vec{s}_1} \cdot \delta_{\vec{t}_2}^{\vec{t}_1} \cdot Q(T_\text{min})^0_{\vec{s}_1 \times \vec{t}_1} \\
		\label{eq:fine}
		&= \sum_{\vec{s}_1, \vec{s}_2 \in \{0,1\}^n} \sum_{\vec{t}_1 \in \{0,1\}^{n - 1}} Q(T_\text{max})^{\vec{s}_2 \times \vec{t}_1}_0 \cdot Q(\rho)_{\vec{s}_2}^{\vec{s}_1} \cdot Q(T_\text{min})^0_{\vec{s}_1 \times \vec{t}_1}.
		\end{align}
		Equation (\ref{eq:fine}) can be simplified further by expressing $T_\text{min}$ as a combination of trivial tangles and tangles which consist of a single, left-oriented cup. Similarly, $T_\text{max}$ can be expressed in terms of trivial tangles and tangles which consist of a single, right-oriented cap. The maps which $Q$ associates to cups and caps can be found in \cite[Section 2.3]{man19}.
		
		If $\vec{s} = (s_1, \dots, s_n) \in \{0,1\}^n$, let flip$(\vec{s}) \in \{0,1\}^n$ be the flipped sequence
		$$
		\text{flip}(\vec{s}) = (s_n, s_{n - 1}, \dots, s_1).
		$$
		It is not difficult to show that, for all $\vec{s}, \vec{t} \in \{0, 1\}^{n - 1}$ and $r \in \{0,1\}$,
		\begin{equation}
			\label{eq:min}
			Q(T_\text{min})^0_{r \times \vec{s} \times \vec{t}} = \begin{cases}
				1 & \text{if } r = 0, \vec{s} = \text{flip}(\vec{t}) \\
				0 & \text{otherwise}
		\end{cases}
		\end{equation}
		and
		\begin{equation}
			\label{eq:max}
			Q(T_\text{max})^0_{r \times \vec{s} \times \vec{t}} = \begin{cases}
				(-1)^{|\vec{s}|}q^{n - 1} & \text{if } r = 0, \vec{s} = \text{flip}(\vec{t}) \\
				0 & \text{otherwise}
		\end{cases}.
		\end{equation}
		In particular, the only nonzero summands of (\ref{eq:fine}) occur when $\vec{s}_1 = \{0\} \times \text{flip}(\vec{t}_1) = \vec{s}_2$. Considering only these summands, and plugging in the values for the coefficients of $Q(T_\text{max})$ and $Q(T_\text{min})$ computed above, yields (\ref{eq:str}).
	\end{proof}
	
	\begin{figure}
		\labellist
		\small\hair 2pt
		\pinlabel $T_\text{min}$ [l] at 1100 85
		\pinlabel $T_\text{max}$ [l] at 1100 515
		\pinlabel {$T_\text{mid} = \rho \otimes {\bf 1}_{n - 1}$} [l] at 1100 295
		\endlabellist
		
		\centering
		\includegraphics[height=5cm]{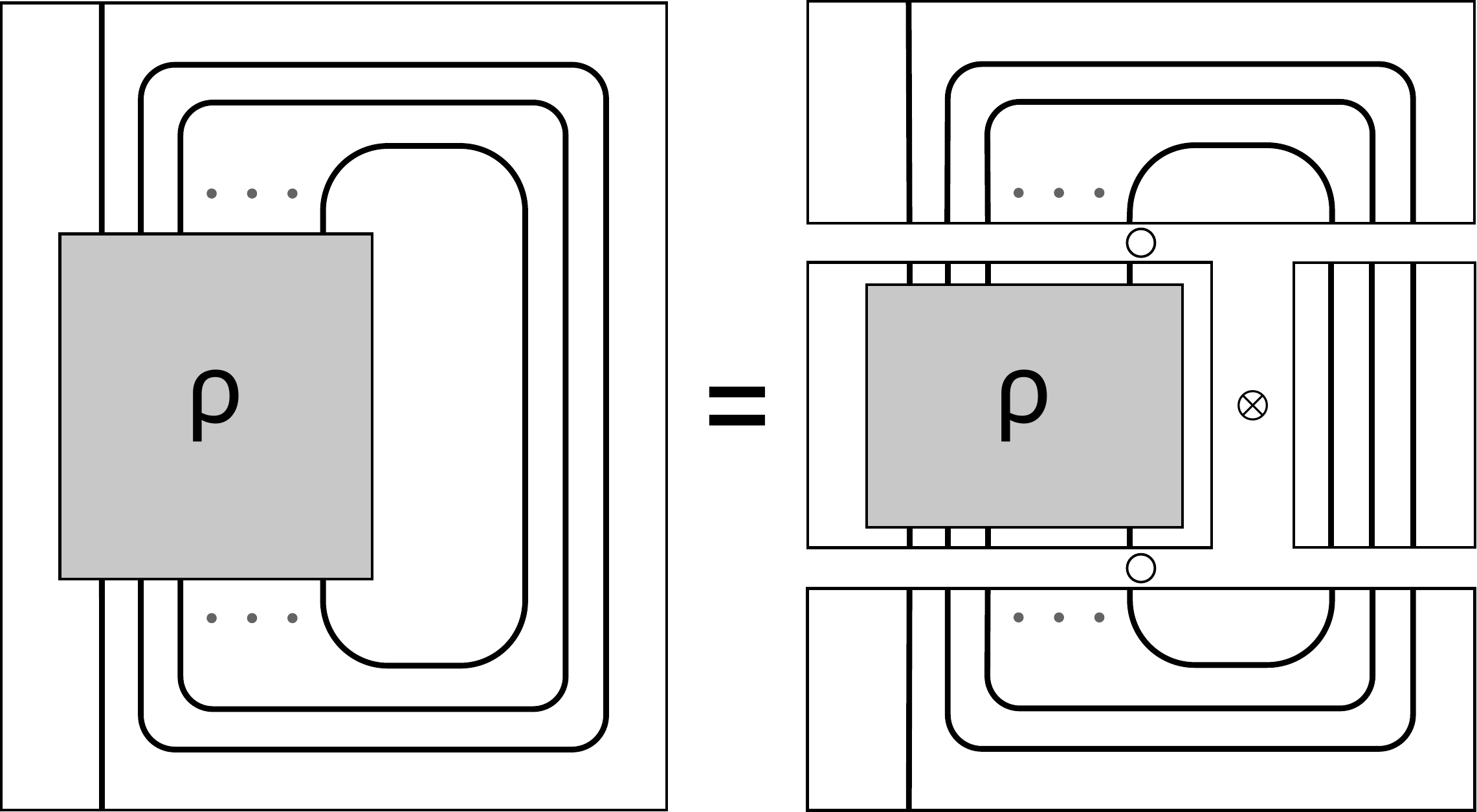}
		\hspace{1cm}
		\caption{Using $\rho$ to express $K$ as a $(1,1)$-tangle}
		\label{fig:state_sum}
	\end{figure}
	
	Combining Proposition \ref{prop:isom} and Lemma \ref{lem:str}, we can rewrite the invariant $\widehat{Q}$ using the representation $\varphi^\wedge_n$.
	
	\begin{lemma}
		\label{lem:str_two}
		Let $\rho \in B_n$ be a braid whose closure is a link $K \subset S^3$, with braid exponent exp$(\rho)$. Then
		\begin{equation}
			\label{eq:str_two}
			\widehat{Q}(K) = q^{n-1-\text{exp}(\rho)} \sum_{\vec{s} \in \{0,1\}^{n - 1}} (-1)^{|\vec{s}|} \psi^\wedge_n(\rho)_{\{0\} \times \vec{s}}^{\{0\} \times \vec{s}}.
		\end{equation}
	\end{lemma}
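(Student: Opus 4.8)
The plan is to deduce the formula directly from \Cref{lem:str} and \Cref{prop:isom}, with no new geometric input. \Cref{lem:str} already expresses $\widehat{Q}(K)$ as the weighted sum
$$
\widehat{Q}(K) = q^{n-1}\sum_{\vec{s}\in\{0,1\}^{n-1}}(-1)^{|\vec{s}|}Q(\rho)^{\{0\}\times\vec{s}}_{\{0\}\times\vec{s}}
$$
of \emph{diagonal} matrix coefficients of $Q(\rho)$ in the standard basis $\{v_{\vec{t}}\}$ of $V^{\otimes n}$. So it suffices to rewrite each diagonal coefficient $Q(\rho)^{\vec{t}}_{\vec{t}}$ in terms of the representation $\psi^\wedge_n$.

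First I would invoke \Cref{prop:isom} to obtain a diagonal matrix $A = \operatorname{diag}(a_{\vec{t}})$, expressed in the basis $\{v_{\vec{t}}\}$, with $A^{-1}Q(\rho)A = q^{-\exp(\rho)}\psi^\wedge_n(\rho)$. The key elementary observation is that conjugation by a diagonal matrix fixes every diagonal entry: for any matrix $M$ one has $(A^{-1}MA)^{\vec{t}}_{\vec{t}} = a_{\vec{t}}^{-1}M^{\vec{t}}_{\vec{t}}a_{\vec{t}} = M^{\vec{t}}_{\vec{t}}$. Applying this with $M = Q(\rho)$ gives $Q(\rho)^{\vec{t}}_{\vec{t}} = q^{-\exp(\rho)}\psi^\wedge_n(\rho)^{\vec{t}}_{\vec{t}}$ for every $\vec{t}\in\{0,1\}^n$, in particular for $\vec{t} = \{0\}\times\vec{s}$. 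Substituting this into the formula from \Cref{lem:str} and pulling the scalar $q^{-\exp(\rho)}$ out of the sum produces exactly (\ref{eq:str_two}).

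There is essentially no obstacle here; the only point that warrants a remark is that the basis in which $A$ is diagonal must agree with the standard basis $\{v_{\vec{t}}\}$ used to define the coefficients $f^{\vec{s}}_{\vec{t}}$. This holds because \Cref{prop:isom} describes $A$ precisely as a rescaling of the basis vectors $w_{\bf j}$, which correspond to the $v_{\vec{s}({\bf j})}$ under the super isomorphism (\ref{eq:sup_isom}); so the conjugation identity can be read off coefficient-by-coefficient in the basis used throughout this section.
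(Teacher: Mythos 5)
Your proof is correct, and it is cleaner than the one in the paper. The paper's proof re-runs the argument of \Cref{lem:str} in the rescaled basis: it observes that equation (\ref{eq:fine}) is basis-independent, that the cup and cap coefficient formulas (\ref{eq:min}) and (\ref{eq:max}) are unaffected by rescaling the basis vectors, and concludes that the whole derivation goes through with $Q(\rho)$ replaced via (\ref{eq:isom}). You instead take the \emph{output} of \Cref{lem:str} as a black box and observe the elementary identity $(A^{-1}MA)^{\vec t}_{\vec t} = M^{\vec t}_{\vec t}$ for $A$ diagonal, which gives $Q(\rho)^{\vec t}_{\vec t} = q^{-\exp(\rho)}\psi^\wedge_n(\rho)^{\vec t}_{\vec t}$ directly. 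This buys you something: you never need to re-examine the cup/cap maps or the structure of the state sum, so the argument is shorter and more self-contained. The one hypothesis both routes genuinely need is that $A$ is diagonal in the same basis $\{v_{\vec t}\}$ used to define the coefficients $f^{\vec s}_{\vec t}$; you flag and justify this correctly via the isomorphism (\ref{eq:sup_isom}). Both approaches are sound, but yours localizes the dependence on \Cref{prop:isom} to a single, transparent linear-algebra fact rather than a re-inspection of the proof of \Cref{lem:str}.
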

	
	\begin{proof}
		The proof of Lemma \ref{lem:str_two} is identical to the proof of Lemma \ref{lem:str}, using the rescaled basis determined by the matrix $A$ of Proposition \ref{prop:isom}, and making the substitution (\ref{eq:isom}). Since equation (\ref{eq:fine}) is basis-independent, and rescaling basis vectors does not affect (\ref{eq:min}) or (\ref{eq:max}), the proof goes through without modifications.
	\end{proof}
	
	For future results it will be useful to think of $\varphi^\wedge_n$ as a map from $B_n$ into $\text{Aut}(\bigwedge W)$ rather than into $\text{Aut}(V^{\otimes n})$, so we restate Lemma \ref{lem:str_two} in this framework. Taking (\ref{eq:wedge_basis}) as our basis for $\bigwedge W$, if ${\bf j}, {\bf k} \subset \{1, \dots, n\}$ are multi-indices of size $|{\bf j}| = |{\bf k}| = m$, and $f : \bigwedge W \to \bigwedge W$ is a grading-preserving linear map, we let $f^{\bf j}_{\bf k}$ denote the coefficient of $w_{\bf k}$ in $f(w_{\bf j})$. Then we have:
	
	\begin{cor}
		\label{cor:str_three}
		Let $\rho \in B_n$ be a braid with exponent exp$(\rho)$, whose closure is a link $K \subset S^3$. Thinking of $\varphi^\wedge_n$ as a map from $B_n$ to $\text{Aut}(\bigwedge W)$, (\ref{eq:str_two}) can be rewritten
		\begin{equation}
			\label{eq:str_three}
			\widehat{Q}(K) = q^{n-1-\text{exp}(\rho)} \sum_{{\bf j} \subset \{2, 3, \dots, n\}} (-1)^{|{\bf j}|} \psi^\wedge_n(\rho)_{\bf j}^{\bf j}.
		\end{equation}
	\end{cor}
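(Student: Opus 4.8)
The plan is to transport equation (\ref{eq:str_two}) across the super vector space isomorphism (\ref{eq:sup_isom}); the corollary is really just a change of notation, so no new geometric or representation-theoretic input is needed. First I would recall that (\ref{eq:sup_isom}) sends $w_{\bf j} \mapsto v_{\vec{s}({\bf j})}$, where for a multi-index ${\bf j} \subset [n]$ the sequence $\vec{s}({\bf j}) \in \{0,1\}^n$ has a $1$ in position $r$ precisely when $r \in {\bf j}$. This is a bijection between subsets of $[n]$ and length-$n$ binary sequences, and I would note that it restricts to a bijection between subsets ${\bf j} \subset \{2, 3, \dots, n\}$ and sequences of the form $\{0\} \times \vec{s}$ with $\vec{s} \in \{0,1\}^{n-1}$ --- that is, those with a $0$ in the first coordinate, exactly the $\vec{s}$ being summed over in (\ref{eq:str_two}). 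Under this bijection $|{\bf j}| = |\vec{s}({\bf j})|$, so the sign $(-1)^{|\vec{s}|}$ becomes $(-1)^{|{\bf j}|}$, and the global factor $q^{n-1-\mathrm{exp}(\rho)}$ is untouched.

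Next I would match up the matrix coefficients. By definition $f^{\bf j}_{\bf k}$ is the coefficient of $w_{\bf k}$ in $f(w_{\bf j})$; applying the isomorphism (\ref{eq:sup_isom}) termwise identifies this with the coefficient of $v_{\vec{s}({\bf k})}$ in $f(v_{\vec{s}({\bf j})})$, i.e.\ with $f^{\vec{s}({\bf j})}_{\vec{s}({\bf k})}$ in the notation of (\ref{eq:coeffs}). Since (\ref{eq:sup_isom}) is exactly the identification used to pass between the two incarnations of the exterior power representation, the diagonal coefficients $\psi^\wedge_n(\rho)^{\{0\} \times \vec{s}}_{\{0\} \times \vec{s}}$ appearing in (\ref{eq:str_two}) become the coefficients $\psi^\wedge_n(\rho)^{\bf j}_{\bf j}$ with ${\bf j} \subset \{2, \dots, n\}$. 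Substituting these three identifications (the index set, the sign, and the coefficients) into (\ref{eq:str_two}) yields (\ref{eq:str_three}) verbatim.

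There is no genuine obstacle here: the mathematical content lives entirely in \Cref{lem:str_two} (hence ultimately in \Cref{lem:str} and \Cref{prop:isom}), and the corollary is a bookkeeping translation from the tensor-power description of the exterior representation to the wedge-basis description. The only point that warrants an explicit sentence is the elementary observation that ``leading coordinate equal to $0$'' corresponds under $\vec{s}(\cdot)$ to ``$1 \notin {\bf j}$,'' which is immediate from the definition of $\vec{s}({\bf j})$.
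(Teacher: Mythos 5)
Your proposal is correct and is essentially the same bookkeeping argument the paper gives (the paper offers no formal proof, only the one-line remark following the corollary that ``excluding $1$ from the multi-indices ${\bf j}$ \dots is equivalent to fixing the first coordinate as $0$''). You have simply spelled out the change of basis $w_{\bf j} \leftrightarrow v_{\vec{s}({\bf j})}$ more carefully, which is a reasonable expansion of what the paper leaves implicit.
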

	
	Excluding $1$ from the multi-indices ${\bf j}$ in (\ref{eq:str_three}) is equivalent to fixing the first coordinate as $0$ in (\ref{eq:str_two}). Additionally, the set of multi-indices in (\ref{eq:str_three}) includes the empty multi-index, which has coefficient $\varphi^\wedge_n(\rho)_\varnothing^\varnothing = 1$.
	
	\section{Heegaard Floer Homology and Quantum $\mathfrak{gl}(1 \vert 1)$}
	\label{sec:lasts}
	
	\subsection{Quantum $\mathfrak{gl}(1 \vert 1)$ and knot Floer theory}
	\label{sec:last}
	
	If $\rho \in B_n$ is a braid, then Proposition \ref{prop:isom} gives a way to understand the coefficients of $Q(\rho)$ geometrically using the noodle/chopstick calculus. As in Definition \ref{def:fn}, let $\eta_1, \dots, \eta_n \subset {\bf D}_n$ be the standard noodles, let $\varepsilon_1, \dots, \varepsilon_n \subset {\bf D}_n$ be the standard chopsticks, and for any intersection point $x \in \rho(\eta_j) \cap \varepsilon_k$, let $S(x) \in \Z$ be the sheet function of Definition \ref{def:sheet}. Finally, given a permutation $\sigma \in S_m$, and multi-indices ${\bf j} = \{j_1, \dots, j_m\}$ and ${\bf k} = \{k_1, \dots, k_m\}$ of size $m$, let $\Gamma_\sigma^{\bf j, k}(\rho)$ denote the set of sets of intersection points
	\begin{equation}
		\label{eq:Gamma}
		\Gamma_\sigma^{\bf j, k}(\rho) = \{\{x_1, \dots, x_m\} \subset {\bf D}_n \mid x_r \in \rho(\eta_{j_r}) \cap \varepsilon_{k_{\sigma(r)}} \text{ for all } r = 1, \dots, m \}.
	\end{equation}
	Although the notation is cumbersome, the next lemma follows directly from Lemma \ref{lem:reform} and the definition of $\psi^\wedge_n$. As in Corollary \ref{cor:str_three}, we think of $\psi^\wedge_n$ as a map from $B_n$ to $\text{Aut}(\bigwedge W)$, and we recall that $q = t^{1/2}$.
	
	\begin{lemma}
		\label{lem:ugly}
		Fix a braid $\rho \in B_n$ and multi-indices ${\bf j}, {\bf k} \subset [n]$ of size $m$. Then
		$$
		\psi^\wedge_n(\rho)^{\bf j}_{\bf k} = \sum_{\sigma \in S_m} \sum_{\{x_1, \dots, x_m\} \in \Gamma^{\bf j, k}_\sigma(\rho)} (-1)^{\text{sgn}(\sigma)} \prod_{r = 1}^m \text{sgn}(x_r)t^{S(x_r)}.
		$$
	\end{lemma}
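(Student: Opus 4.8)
The plan is to derive the formula from the standard description of an exterior-power representation by minors, \Cref{lem:reform}, and an elementary identification of tuples of intersection points with the sets comprising $\Gamma^{\bf j,k}_\sigma(\rho)$.

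First I would make the action of $\psi^\wedge_n$ explicit on the basis (\ref{eq:wedge_basis}). Identifying $w_j$ with the basis element $\tilde\eta_j$, equation (\ref{eq:fn}) combined with the expansion $e = \sum_j \langle e, \tilde\varepsilon_j\rangle \tilde\eta_j$ shows that $\psi_n(\rho)$ acts by $w_j \mapsto \sum_k (\psi_n(\rho))_{jk}\, w_k$; in particular this fixes the convention that the first subscript of $(\psi_n(\rho))_{jk}$ is a row index. Applying $\psi^\wedge_n(\rho)$ to $w_{\bf j} = w_{j_1} \wedge \cdots \wedge w_{j_m}$, expanding each factor $\psi_n(\rho)(w_{j_r})$, and collecting the monomials $w_{\ell_1} \wedge \cdots \wedge w_{\ell_m}$ whose index set equals ${\bf k}$ (writing $\ell_r = k_{\sigma(r)}$ and using $w_{k_{\sigma(1)}} \wedge \cdots \wedge w_{k_{\sigma(m)}} = (-1)^{\text{sgn}(\sigma)} w_{\bf k}$) yields
$$
\psi^\wedge_n(\rho)^{\bf j}_{\bf k} = \det\big( (\psi_n(\rho))_{j_a k_b} \big)_{1 \le a, b \le m} = \sum_{\sigma \in S_m} (-1)^{\text{sgn}(\sigma)} \prod_{r = 1}^m (\psi_n(\rho))_{j_r k_{\sigma(r)}}.
$$

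Next I would substitute \Cref{lem:reform}, replacing each entry by $(\psi_n(\rho))_{j_r k_{\sigma(r)}} = \sum_{x \in \rho(\eta_{j_r}) \cap \varepsilon_{k_{\sigma(r)}}} \text{sgn}(x)\, t^{S(x)}$, and distribute the product over these sums. Each term of the expansion is a choice, for every $r$, of a point $x_r \in \rho(\eta_{j_r}) \cap \varepsilon_{k_{\sigma(r)}}$, contributing $\prod_r \text{sgn}(x_r) t^{S(x_r)}$. Since the standard chopsticks $\varepsilon_1, \dots, \varepsilon_n$ are pairwise disjoint, the $x_r$ lie on distinct arcs $\varepsilon_{k_{\sigma(r)}}$ and are therefore automatically distinct; conversely, a set $\{x_1, \dots, x_m\} \in \Gamma^{\bf j,k}_\sigma(\rho)$ recovers the label $r$ of each of its points as the unique index with that point on $\varepsilon_{k_{\sigma(r)}}$. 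Hence the tuples $(x_1, \dots, x_m)$ are in bijection with the elements of $\Gamma^{\bf j,k}_\sigma(\rho)$, and the expansion becomes exactly $\sum_{\sigma \in S_m} (-1)^{\text{sgn}(\sigma)} \sum_{\{x_1, \dots, x_m\} \in \Gamma^{\bf j,k}_\sigma(\rho)} \prod_{r=1}^m \text{sgn}(x_r) t^{S(x_r)}$, which is the asserted identity.

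There is no real obstacle here, since the lemma is a repackaging of earlier results, but the step demanding the most care is the first one: pinning down the row/column conventions for $\psi_n(\rho)$ so that the minor appearing in $\psi^\wedge_n(\rho)^{\bf j}_{\bf k}$ has row set ${\bf j}$ and column set ${\bf k}$, matching the way $\Gamma^{\bf j,k}_\sigma(\rho)$ pairs the noodle $\rho(\eta_{j_r})$ with the chopstick $\varepsilon_{k_{\sigma(r)}}$. An incompatible convention would transpose ${\bf j}$ and ${\bf k}$ throughout. The only other point worth checking explicitly is the disjointness of the chopsticks, which is what upgrades the tuple-versus-set correspondence from a surjection to a bijection.
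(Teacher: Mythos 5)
Your proof is correct and coincides with the approach the paper has in mind: the paper gives no written proof, stating only that the lemma ``follows directly from \Cref{lem:reform} and the definition of $\psi^\wedge_n$,'' which is precisely the chain you carry out — compute $\psi^\wedge_n(\rho)^{\bf j}_{\bf k}$ as the $({\bf j},{\bf k})$-minor of $\psi_n(\rho)$, substitute the noodle/chopstick expansion of each entry, and identify the resulting tuples with elements of $\Gamma^{\bf j,k}_\sigma(\rho)$. Your attention to the row/column convention (using (\ref{eq:fn}) and the dual-basis expansion to pin down that $j$ indexes rows) and to the disjointness of the chopsticks (which makes the tuple/set correspondence a bijection) are exactly the two points one needs to check, and both are handled correctly.
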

	
	Using Lemma \ref{lem:ugly}, we can now expand the state sum in Corollary \ref{cor:str_three} geometrically.
	
	\begin{prop}
		\label{prop:expanded}
		Let $\rho \in B_n$ be a braid with closure $K \subset S^3$. Then
		\begin{equation}
			\label{eq:str_four}
		\widehat{Q}(K) = t^{(n - 1 - \text{exp}(\rho))/2} \sum_{{\bf j} \subset \{2, \dots, n\}} \sum_{\sigma \in S_{|{\bf j}|}} \sum_{\{x_1, \dots, x_{|{\bf j}|}\} \in \Gamma_\sigma^{\bf j, j }(\rho)} (-1)^{|{\bf j}| + \text{sgn}(\sigma)} \prod_{r = 1}^{|{\bf j}|} \text{sgn}(x_r)t^{S(x_r)}.
		\end{equation}
	\end{prop}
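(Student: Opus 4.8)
The plan is to obtain (\ref{eq:str_four}) by substituting the geometric expansion of Lemma~\ref{lem:ugly} directly into the state sum of Corollary~\ref{cor:str_three}. Concretely, I would start from
$$
\widehat{Q}(K) = q^{\,n-1-\text{exp}(\rho)} \sum_{{\bf j} \subset \{2, \dots, n\}} (-1)^{|{\bf j}|}\, \psi^\wedge_n(\rho)_{\bf j}^{\bf j},
$$
and then, for each multi-index ${\bf j}$, replace the diagonal coefficient $\psi^\wedge_n(\rho)_{\bf j}^{\bf j}$ by the right-hand side of the formula in Lemma~\ref{lem:ugly} with ${\bf k} = {\bf j}$, namely
$$
\psi^\wedge_n(\rho)^{\bf j}_{\bf j} = \sum_{\sigma \in S_{|{\bf j}|}} \sum_{\{x_1, \dots, x_{|{\bf j}|}\} \in \Gamma^{\bf j, j}_\sigma(\rho)} (-1)^{\text{sgn}(\sigma)} \prod_{r = 1}^{|{\bf j}|} \text{sgn}(x_r)\, t^{S(x_r)}.
$$

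The remaining steps are purely formal: use the relation $q = t^{1/2}$ to rewrite the global prefactor $q^{\,n-1-\text{exp}(\rho)}$ as $t^{(n-1-\text{exp}(\rho))/2}$, and combine the two sources of signs via $(-1)^{|{\bf j}|}\cdot(-1)^{\text{sgn}(\sigma)} = (-1)^{|{\bf j}| + \text{sgn}(\sigma)}$. After distributing the outer sum over ${\bf j}$ past the inner sums over $\sigma$ and over $\Gamma^{\bf j,j}_\sigma(\rho)$, this produces exactly the nested triple sum displayed in the statement. The one place worth an explicit sentence is the empty multi-index ${\bf j} = \varnothing$: there $S_0$ is a singleton, $\Gamma^{\varnothing,\varnothing}_{\mathrm{id}}(\rho) = \{\varnothing\}$, the empty product equals $1$, and the corresponding summand is $(-1)^0 t^0 = 1$, consistent with the observation following Corollary~\ref{cor:str_three} that $\psi^\wedge_n(\rho)^\varnothing_\varnothing = 1$.

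I do not expect any genuine obstacle here: the proposition is an algebraic consequence of two results already established, with the substantive content — translating the exterior-power coefficients $\psi^\wedge_n(\rho)^{\bf j}_{\bf j}$ into counts of noodle/chopstick intersection tuples weighted by the sheet function — having been handled in Lemma~\ref{lem:ugly} (itself built on Lemma~\ref{lem:reform} and the definition of $\psi^\wedge_n$). If anything requires care it is only the bookkeeping that $\Gamma^{\bf j,j}_\sigma(\rho)$ consists of \emph{sets} of intersection points, one on each relevant arc, rather than ordered tuples; but since Lemma~\ref{lem:ugly} is phrased in precisely these terms, no further argument is needed, and one simply records the resulting identity as (\ref{eq:str_four}).
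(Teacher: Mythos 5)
Your proposal is correct and matches exactly what the paper intends: the proposition is stated without a written-out proof, and the sentence preceding it (``Using Lemma~\ref{lem:ugly}, we can now expand the state sum in Corollary~\ref{cor:str_three} geometrically'') is precisely the substitution you carry out, together with the change of variable $q = t^{1/2}$ and the merging of the two sign factors. Your extra note on the empty multi-index ${\bf j} = \varnothing$ is a harmless and accurate sanity check, consistent with the remark after Corollary~\ref{cor:str_three} that $\psi^\wedge_n(\rho)^\varnothing_\varnothing = 1$.
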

	
	Returning to knot Floer homology, let $K \subset S^3$ be an oriented link, and let $\rho \in B_n$ be a braid whose closure is $K$. Let $\mathcal{H}(m(K))$ be the Heegaard diagram $\mathcal{H}(\rho, \{2, \dots, n\}, \{2, \dots, n\})$ of Definition \ref{def:heediag} with the basepoints $p_L$ and $p_R$ removed---as discussed in Section \ref{sec:props}, $\mathcal{H}(m(K))$ is then a Heegaard diagram for $m(K)$. If $x \in \mathbb{T}_\alpha \cap \mathbb{T}_\beta$ is a generator for $\widehat{CF}(\mathcal{H}(m(K)))$ with Maslov grading $j$ and Alexander grading $k$, then we say the {\em weight} $w(x)$ of $x$ is the decategorified value of $x$, that is,
	$$
	w(x) = (-1)^jt^k.
	$$
	Similarly, if $x$ is a summand of (\ref{eq:str_four}), then the weight $w(x)$ of $x$ is simply its value in $\Z[t, t^{-1}]$. With this, we are prepared to state the main result of the section.
	
	\begin{thm}
		\label{thm:clear}
		There is a bijection $f$ between the generators of the chain complex \break $\widehat{CF}(\mathcal{H}(m(K)))$ and the summands used to calculate $\widehat{Q}(K)$ in (\ref{eq:str_four}). Moreover, if $x \in \mathbb{T}_\alpha \cap \mathbb{T}_\beta$ is generator of $\widehat{CF}(\mathcal{H}(m(K)))$, then
		$$
		(-1)^{1 - n}t^{(n - 1 - \text{exp}(\rho))/2} w(x) = w(f(x)).
		$$
		In other words, $f$ is weight-preserving up to a global normalization.
	\end{thm}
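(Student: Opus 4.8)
The plan is to construct the bijection $f$ explicitly and then match weights term by term. On the Heegaard Floer side, $\mathcal{H}(m(K))$ has $\alpha = \{\alpha_2,\dots,\alpha_n\}$ and $\beta = \{\beta_2,\dots,\beta_n\}$, so a generator $x \in \mathbb{T}_\alpha \cap \mathbb{T}_\beta$ amounts to a permutation $\sigma'$ of $\{2,\dots,n\}$ together with a point $x_r \in \beta_r \cap \alpha_{\sigma'(r)}$ for each $r$. By the dichotomy recorded just before \Cref{def:anchor}, each $x_r$ is either the $r$th anchor point of $\beta_r \cap \alpha_r$ (forcing $\sigma'(r) = r$) or an honest intersection point lying in $\mathring{\bf D}_n$. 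On the quantum side, a summand of \eqref{eq:str_four} is the data of a subset ${\bf j} \subset \{2,\dots,n\}$, a permutation $\sigma \in S_{|{\bf j}|}$, and a tuple $\{x_1,\dots,x_{|{\bf j}|}\} \in \Gamma^{{\bf j},{\bf j}}_\sigma(\rho)$, i.e.\ a choice of $x_r \in \rho(\eta_{j_r}) \cap \varepsilon_{j_{\sigma(r)}}$ (see \eqref{eq:Gamma}). I would let $f$ send a generator $x$ to the triple $({\bf j},\sigma,\{\tilde x_r\})$, where ${\bf j}$ is the set of indices $r$ for which $x_r$ is \emph{not} an anchor point; where $\sigma$ is the restriction $\sigma'|_{\bf j}$ (this makes sense because $\sigma'$ fixes $\{2,\dots,n\}\setminus{\bf j}$ pointwise, anchor points lying on $\beta_r \cap \alpha_r$); and where, for $r \in {\bf j}$, the point $\tilde x_r \in \rho(\eta_r) \cap \varepsilon_{\sigma(r)}$ is the one matched to $x_r \in \beta_r \cap \alpha_{\sigma(r)} \cap \mathring{\bf D}_n$ by \Cref{lem:relating}.

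Next I would verify $f$ is a bijection by exhibiting its inverse: given $({\bf j},\sigma,\{\tilde x_r\})$, extend $\sigma$ to a permutation of $\{2,\dots,n\}$ by the identity off ${\bf j}$, set $x_r$ equal to the $r$th anchor point for $r \notin {\bf j}$, and for $r \in {\bf j}$ take $x_r$ to be the point of $\beta_r \cap \alpha_{\sigma(r)} \cap \mathring{\bf D}_n$ matched to $\tilde x_r$ by \Cref{lem:relating}. These two assignments are mutually inverse: the dichotomy before \Cref{def:anchor} partitions the intersection points on the $\beta$ curves into anchor points and points of $\mathring{\bf D}_n$, these two types are disjoint (anchor points lie outside $\mathring{\bf D}_n$), and the index conventions of \eqref{eq:Gamma} were arranged to match the curve labels of \Cref{def:heediag}, so nothing is lost or created. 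Informally, $f$ is nothing but the observation --- already implicit in \Cref{lem:ugly} and \Cref{prop:expanded} --- that the ``missing'' coordinates $\{2,\dots,n\}\setminus{\bf j}$ in the state sum correspond exactly to the anchor coordinates of a Heegaard Floer generator.

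It remains to compare weights. For $x = f^{-1}({\bf j},\sigma,\{\tilde x_r\})$, the Alexander grading on $\widehat{HFK}(m(K))$ agrees, up to a global shift, with the grading $A_1$ of \Cref{def:grading} (this is the identification recalled in \Cref{sec:props}), and since $A_1^{\text{loc}}$ vanishes on anchor points we have $A_1(x) = \sum_{r \in {\bf j}} S(\tilde x_r)$. For the Maslov parity, \Cref{lem:maslov} gives $(-1)^{M(x)} = c \cdot \text{sgn}(x)$ with $c$ a global constant, which I may normalize to $+1$, and \eqref{eq:orientation} gives $\text{sgn}(x) = (-1)^{\text{sgn}(\sigma')}\prod_r \text{sgn}(x_r)$. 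Here $(-1)^{\text{sgn}(\sigma')} = (-1)^{\text{sgn}(\sigma)}$, since $\sigma'$ is $\sigma$ with $n-1-|{\bf j}|$ extra fixed points, and each anchor point has sign $-1$ --- this is exactly the content of the term $-\lambda\,\delta_{j_s k_{\sigma(s)}}$ in the expansion of $\det(A^{{\bf j}{\bf k}})$ in \Cref{sec:gradings}. Hence $\prod_r \text{sgn}(x_r) = (-1)^{\,n-1-|{\bf j}|}\prod_{r \in {\bf j}} \text{sgn}(\tilde x_r)$, so
\[
(-1)^{M(x)}\,t^{A_1(x)} \;=\; (-1)^{\,n-1}\Bigl[\,(-1)^{\,|{\bf j}| + \text{sgn}(\sigma)}\prod_{r \in {\bf j}}\text{sgn}(\tilde x_r)\,t^{S(\tilde x_r)}\Bigr],
\]
and the bracketed quantity is exactly the $({\bf j},\sigma,\{\tilde x_r\})$-summand of the state sum \eqref{eq:str_four}, up to its global prefactor. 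Tracking the two global factors --- the sign $(-1)^{n-1} = (-1)^{1-n}$ coming from the anchor points, and the power $t^{(n-1-\text{exp}(\rho))/2}$ coming from the prefactor in \eqref{eq:str_four} (equivalently, from the choice of absolute Alexander grading) --- gives $w(f(x)) = (-1)^{1-n}\,t^{(n-1-\text{exp}(\rho))/2}\,w(x)$, as claimed.

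I expect the only real difficulty to be organizational: there is no analysis here, just the chore of aligning three sets of conventions --- the curve labels of \Cref{def:heediag}, the index conventions of \eqref{eq:Gamma}, and the sign conventions behind \Cref{lem:maslov} and \eqref{eq:orientation} --- and, in particular, pinning down that each anchor point contributes $-1$, which is the source of the global sign $(-1)^{1-n}$. Once the bijection is set up, the weight identity is the short computation above.
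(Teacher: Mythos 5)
Your proposal is correct and follows essentially the same route as the paper's proof: strip out the anchor points, identify what remains with a state of the quantum state sum via \Cref{lem:relating}, and compute the weight ratio from the two gradings. The one place you add a bit of color the paper doesn't --- tracing the sign $-1$ of an anchor point back to the $-\lambda\delta_{j_s k_{\sigma(s)}}$ term in the determinant expansion of \Cref{sec:gradings} --- is a valid and slightly more self-contained justification of the fact the paper simply asserts.
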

	
	Of course, we already know the sums of these two quantities are equal up to a normalization, since both give $\Delta_K = \Delta_{m(K)}$. However, as noted after Proposition \ref{prop:q_alex}, we have not assumed this fact for either summation. Theorem \ref{thm:clear} is more or less a geometric version of Kauffman and Saleur's ``det $=$ str'' identity \cite[Appendix]{kasa91}, where str indicates the super trace as in Remark \ref{rmk:str}. This identity states, approximately, that
	$$
	\det(\psi - I) = \text{str}(\psi^\wedge)
	$$
	for any linear map $\psi$. The idea underlying our proof of Theorem \ref{thm:clear} is simple: the bijection $f$ is given by removing anchor points from generators of $\widehat{CF}(\mathcal{H}(m(K)))$, and then using the identification of Lemma \ref{lem:relating}.
	
	\begin{proof}[Proof of Theorem \ref{thm:clear}]
		We follow the notation of Section \ref{sec:diags}. Let $x = \{x_1, \dots, x_{n - 1}\} \in \mathbb{T}_\alpha \cap \mathbb{T}_\beta$ be a generator of $\widehat{CF}(\mathcal{H}(m(K)))$, so that $x_r \in \alpha_k \cap \beta_j$ for some $j, k \in \{2, \dots, n\}$, for each $r \in \{1, \dots, n - 1\}$. As discussed prior to Definition \ref{def:anchor}, each $x_r \in x$ either lies in the punctured disk $\mathring{\bf D}_n \subset \Sigma$, or is an anchor point. Define a subset $x' \subset x$ by
		$$
		x' = \{x_r \in x \mid \text{$x_r$ is not an anchor point}\},
		$$
		and suppose without loss of generality that $x' = \{x_1, \dots, x_m\}$ for some $0 \leq m \leq n - 1$. Define multi-indices ${\bf j} = \{j_1, \dots, j_m\}, {\bf k} = \{k_1, \dots, k_m\} \subset \{2, \dots, n\}$ by
		\begin{align*}
			{\bf j} &= \{j \mid x_r \in \beta_j \text{ for some } x_r \in x'\} \\
			{\bf k} &= \{k \mid x_r \in \alpha_k \text{ for some } x_r \in x'\}.
		\end{align*}
		Then, for some permutation $\sigma \in S_m$, we have that
		$$
		x_r \in \beta_{j_r} \cap \alpha_{k_{\sigma(r)}}
		$$
		for $r = 1, \dots, m$. We claim ${\bf j} = {\bf k}$. Indeed, 
		\begin{align*}
		{\bf j} &= \{2, \dots, n\} \setminus \{s \mid \alpha_s \cap x \text{ is an anchor point}\} \\
			&= \{2, \dots, n\} \setminus \{s \mid \beta_s \cap x \text{ is an anchor point}\} \\
			&= {\bf k},
		\end{align*}
		where the second equality follows from the fact that any anchor point intersects $\alpha$ and $\beta$ curves of the same index. Thus, $x_r \in \beta_{j_r} \cap \alpha_{j_{\sigma(r)}}$ for all $x_r \in x'$.
		
		By Lemma \ref{lem:relating}, since $x_r \in \mathring{\bf D}_n$ for all $x_r \in x'$, each $x_r \in x'$ can be naturally identified with some $y_r \in \rho(\eta_{j_r}) \cap \varepsilon_{j_{\sigma(r)}}$ for $r = 1, \dots, m$. In this way the set $\{x_1, \dots, x_m\}$ can be identified with a set
		$$
		\{y_1, \dots, y_m\} \in \Gamma^{\bf j, j}_\sigma(\rho),
		$$
		which is a summand of (\ref{eq:str_four}). We define a map $f$ from generators of $\widehat{CF}(\mathcal{H}(m(K)))$ to summands of (\ref{eq:str_four}) in this way, by sending $x$ to the corresponding set $\{y_1, \dots, y_m\}$ as we've just described.
		
		We verify that $f$ is a bijection by defining an inverse. Suppose
		$$
		\{y_1, \dots, y_m\} \in \Gamma^{\bf j, j}_\sigma(\rho)
		$$
		for some multi-index ${\bf j} \subset \{2, \dots, n\}$ with $|{\bf j}| = m$, and some permutation $\sigma \in S_m$, so that $\{y_1, \dots, y_m\}$ is a summand of (\ref{eq:str_four}). Then, by Lemma \ref{lem:relating}, $\{y_1, \dots, y_m\}$ can be identified with a set of points $x' = \{x_1, \dots, x_m\}$ in the Heegaard diagram $\mathcal{H}(m(K))$ with $x_r \in \beta_{j_r} \cap \alpha_{j_{\sigma(r)}}$ for $r = 1, \dots, m$. We complete $x'$ to a generator of $\widehat{CF}(\mathcal{H}(m(K)))$ by adding the $r$th anchor point for all $r \notin {\bf j}$; for example, the empty index is identified with the generator of $\widehat{CF}(\mathcal{H}(m(K)))$ consisting of all the anchor points. This map is an inverse for $f$.
		
		It remains to see that $f$ is weight-preserving up to the specified factor. For this, let $x = \{x_1, \dots, x_{n - 1}\}$ be a generator for $\widehat{CF}(\mathcal{H}(m(K)))$ such that the anchor points of $x$ are precisely the coordinates $x_{m + 1}, \dots, x_{n - 1}$ for some $0 \leq m \leq n - 1$. Let $\{y_1, \dots, y_m\} \in \Gamma^{\bf j, j}_\sigma$ be the image of $x$ under the bijection $f$, with ${\bf j}$ a multi-index of size $m$ and $\sigma \in S_m$. By Definition \ref{def:grading}, since each anchor point has Alexander grading zero and a negative orientation,
		$$
		w(x) = (-1)^{n - 1 - m + \text{sgn}(\sigma)} \prod_{r = 1}^{m} \text{sgn}(y_r)t^{S(y_r)}.
		$$
		On the other hand, using (\ref{eq:str_four}), $f(x) = \{y_1, \dots, y_m\}$ has weight
		$$
		w(f(x)) = t^{(n - 1 - \text{exp}(\rho))/2} \cdot (-1)^{m + \text{sgn}(\sigma)} \prod_{r = 1}^{m} \text{sgn}(y_r)t^{S(y_r)}.
		$$
		Comparing these two quantities concludes the proof of the theorem.
	\end{proof}
	
	\begin{rmk}
		\label{rmk:str_two}
		In analogy with Proposition \ref{prop:expanded}, the full super trace of $Q(\rho)$ can be written
		$$
			\text{str}(Q(\rho))= t^{- \text{exp}(\rho)/2} \sum_{{\bf j} \subset [n]} \sum_{\sigma \in S_{|{\bf j}|}} \sum_{\{x_1, \dots, x_{|{\bf j}|}\} \in \Gamma_\sigma^{\bf j, j }(\rho)} (-1)^{|{\bf j}| + \text{sgn}(\sigma)} \prod_{r = 1}^{|{\bf j}|} \text{sgn}(x_r)t^{S(x_r)}.
		$$
		The same argument used in the proof of Theorem \ref{thm:clear} gives a weight-preserving bijection between the summands in this formula and the generators of the chain complex \break $\widehat{CFB}(\rho, [n], [n])$ for the full Heegaard diagram $\mathcal{H}(\rho, [n], [n])$, again forgetting the basepoints $p_L$ and $p_R$. In this case, however, since $\mathcal{H}(\rho, [n], [n])$ is a Heegaard diagram for $S^3$ and the invariant $Q$ vanishes for links, the total of both summations is zero. In fact the homology group $\widehat{HFB}(\rho, [n], [n])$ is trivial, so this case is less interesting.
	\end{rmk}
	
	We make two other observations about Theorem \ref{thm:clear}. First, a similar mirroring occurs in Manion's work \cite[Theorem 1.4.2]{man19}, and this mirroring shows up again in the next section. Second, we note that by switching the $\alpha$ and $\beta$ curves in $\mathcal{H}(m(K))$, one obtains a Heegaard diagram $\mathcal{H}(K)$ for $m(m(K)) = K$, whose chain complex $\widehat{CF}(\mathcal{H}(K))$ is dual to $\widehat{CF}(\mathcal{H}(m(K)))$. We could have equivalently used the chain complex $\widehat{CF}(\mathcal{H}(K))$ in Theorem \ref{thm:clear}.
	
	\subsection{Bordered sutured manifolds for tangles}
	\label{sec:bst}
	
	One can interpret Theorem \ref{thm:clear}, and Remark \ref{rmk:str_two}, as demonstrating how Heegaard Floer chain complexes encode the super trace of the $U_q(\mathfrak{gl}(1 \vert 1))$ braid representation. It is natural to ask if a different Floer theory might contain the entire representation. In the next sections, we answer this question affirmatively using Zarev's bordered sutured Heegaard Floer homology \cite{za11}. Of course, from work of Ellis-Petkova-V\'ertesi \cite{epv19} and Manion \cite{man19}, we already know that this and more is true: appropriate tangle Floer theories categorify the whole $U_q(\mathfrak{gl}(1 \vert 1))$ quantum invariant. Our contribution to this problem is a simple, geometric approach.
	
	We will not discuss the details of bordered sutured Floer theory here---we refer the reader to Zarev's paper \cite{za11}, and to \cite{lot14} for more information on Lipshitz, Ozsv\'ath and Thurston's bordered Heegaard Floer homology. Additionally, to keep the scope of the paper manageable, we will only minimally develop our theory here. We will explore this direction in more detail in a follow-up paper.
	
	For our purposes, a {\em sutured manifold} is an oriented three-manifold $Y$ whose boundary is decomposed into two compact surfaces,
	$$
	\partial Y = R_+ \cup R_-,
	$$
	such that $R_+ \cap R_- = \partial R_+ = \partial R_-$, and each boundary component of $Y$ intersects both $R_+$ and $R_-$. The sets $R_+$ and $R_-$ are called the {\em positive} and {\em negative regions} of $\partial Y$ respectively, and the curves which make up their mutual boundary are called {\em sutures}. The regions and sutures are oriented so that the sutures' orientation agrees everywhere with the orientation induced by $R_+$, and disagrees with the orientation induced by $R_-$. The sutured structure on $Y$ is then determined by specifying $R_+$, and we represent sutured manifolds visually by shading the positive region.
	
	An {\em arc diagram} $\mathcal{Z}$ is a finite, unoriented graph $(Z,E)$, with vertex set $Z = \{Z_j\}$ and edge set $E = \{e_k\}$, such that each vertex of $\mathcal{Z}$ is equipped with a linear ordering of the edges which abut it. We represent arc diagrams topologically by realizing each vertex $Z_j$ as an oriented line segment, so that each edge adjacent to $Z_j$ intersects it at a distinct point, and the edge ordering is the order in which the edges intersect the segment. A {\em bordered sutured manifold} $(Y, R_+, \mathcal{Z}, \phi)$ is a sutured manifold $(Y, R_+)$ equipped with an embedding $\phi : \mathcal{Z} \to \partial Y$, such that:
	\begin{itemize}
		\item $\mathcal{Z}$ is (the topological realization of) an arc diagram.
		\item The image $\phi(Z_j)$ of each (line segment) vertex of $\mathcal{Z}$ is contained in a suture of $Y$, so that the orientation of the line segment agrees with the orientation of the suture.
		\item The image $\phi(e_k)$ of each edge of $\mathcal{Z}$ is contained in $R_-$.
	\end{itemize}
	To any bordered sutured manifold, one can associate a {\em bordered sutured Heegaard diagram} $\mathcal{H} = (\Sigma, \alpha^a, \alpha^c, \beta, \mathcal{Z}, \phi)$. This consists of:
	\begin{itemize}
		\item A compact, oriented surface $\Sigma$ with boundary.
		\item An arc diagram $\mathcal{Z} = (Z,E)$, with an embedding $\phi : \mathcal{Z} \to \Sigma$ such that the vertices $Z$ of $\mathcal{Z}$ are mapped orientation-preservingly into $\partial \Sigma$, and the interiors of the edges $E$ are sent to $\text{int}(\Sigma)$.
		\item A set $\alpha^a = \{\alpha^a_1, \dots, \alpha^a_k\}$ of pairwise disjoint, properly embedded arcs in $\Sigma$, such that $|\alpha^a| = |E|$ and each $\alpha_j^a \in \alpha^a$ is equal to $\phi(e_j)$ for some edge $e_j \in E$.
		\item A set $\alpha^c = \{\alpha^c_1, \dots, \alpha^c_m\}$ of pairwise disjoint, simple closed curves in $\Sigma$. The curves $\alpha^c$ are also required to be disjoint from the arcs $\alpha^a$, and the set $\alpha^a \cup \alpha^c$ of arcs and curves is linearly independent in $H_1(\Sigma, \phi(Z))$.
		\item A set $\beta = \{\beta_1, \dots, \beta_n\}$ of pairwise disjoint, simple closed curves in $\Sigma$, which are linearly independent in $H_1(\Sigma, \phi(Z))$ and which intersect $\alpha^a \cup \alpha^c$ transversely.
	\end{itemize}
	A bordered sutured manifold can be constructed from a bordered sutured Heegaard diagram as follows: one starts with $\Sigma \times I$, with $R_- = \Sigma \times \{0\}$ and $R_+  = \partial (\Sigma \times I) \setminus \text{int}(R_-)$. The embedding $\phi : \mathcal{Z} \to \Sigma$ is thought of as an embedding into $\Sigma \times \{0\} = R_-$, and the $\alpha^c$ curves are also considered as subsets of $\Sigma \times \{0\}$, while the $\beta$ curves are thought of as subsets of $\Sigma \times \{1\}$. One then attaches two-handles to $\Sigma \times \{0\}$ along every $\alpha^c$ curve, and attaches two-handles to $\Sigma \times \{1\}$ along every $\beta$ curve, with the understanding that the exposed boundary of an $\alpha^c$ handle after gluing is contained in $R_-$, and the boundary of a $\beta$ handle is contained in $R_+$.
	
	We will only be concerned with the following case of this theory. Let $T \subset I^3$ be an $(m, n)$-tangle in the unit cube, with no closed components, and let $Y(T) = I^3 - N(T)$ be the exterior of $T$, where $N$ denotes a regular open neighborhood.\footnote{We implicitly smooth the corners of $Y(T)$ as needed.} We endow $Y(T)$ with the structure of a sutured manifold by declaring
	$$
	R_+ = \partial N(T) \cup (I \times \{1\} \times I).
	$$
	Separately, let $\mathcal{Z}_k$ be the arc diagram shown in Figure \ref{fig:arc_diag}, with $k + 1$ vertices and $k$ edges. Let $\mathcal{Z} = \mathcal{Z}_m \sqcup \mathcal{Z}_n$ be a disjoint union, and let $\phi : \mathcal{Z} \to Y(T)$ be an embedding of $\mathcal{Z}$ into $\partial Y(T)$ such that:
	\begin{itemize}
		\item The embedding $\phi$ sends $\mathcal{Z}_m \subset \mathcal{Z}$ to the bottom face $I^2 \times \{0\}$ of the cube, with each edge a straight line segment which connects the back face $(I \times \{1\} \times I)$ with a distinct component of $\partial N(T)$.
		\item The embedding $\phi$ sends $\mathcal{Z}_n \subset \mathcal{Z}$ to the top face $I^2 \times \{1\}$ of the cube in an analogous way.
	\end{itemize}
	Then $(Y(T), R_+, \mathcal{Z}, \phi)$ is a bordered sutured manifold canonically associated to the tangle $T$. Figure \ref{fig:bsta} shows $(Y(T), R_+, \mathcal{Z}, \phi)$ for the tangle $T$ corresponding to the geometric braid $\sigma_1^{-2} \in B_2$.
	
	\begin{defn}
		\label{def:bst}
		Given a tangle $T \subset I^3$, we denote the bordered sutured manifold constructed above by $(Y(T), R_+, \mathcal{Z}, \phi)$, or simply by $Y(T)$.
	\end{defn}
	
	\begin{rmk}
		We can extend the definition of $Y(T)$ to tangles with closed components by equipping the boundary of a regular neighborhood of each closed component of $T$ with two oppositely oriented meridional sutures. This is standard in the literature---see \cite[Example 2.4]{juh06}.
	\end{rmk}
	
	\begin{figure}
		\labellist
		\small\hair 2pt
		\pinlabel $e_1$ at 35 115
		\pinlabel $e_2$ at 170 115
		\pinlabel $e_k$ at 505 115
		\endlabellist
		
		\centering
		\includegraphics[height=2cm]{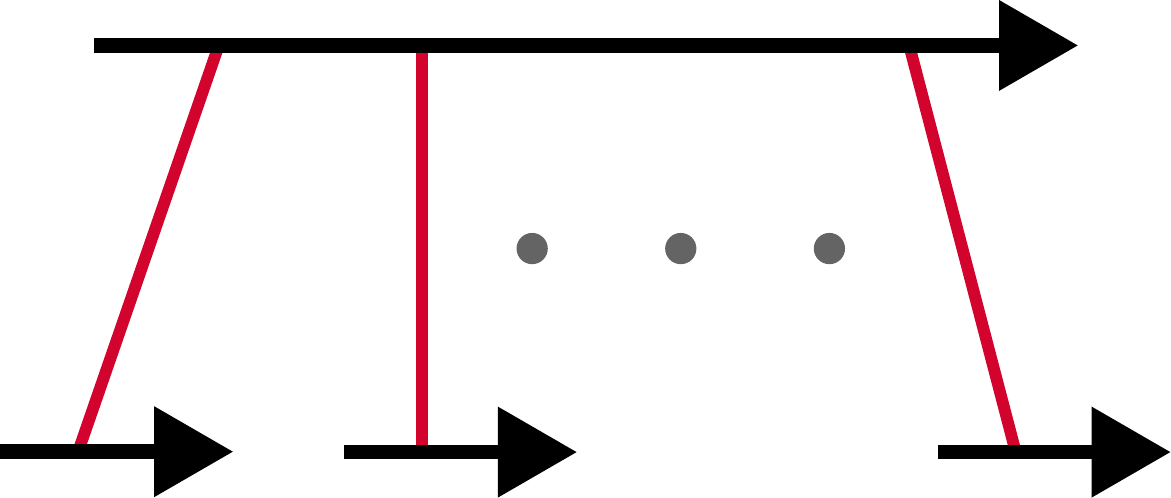}
		\caption{The arc diagram $\mathcal{Z}_k$}
		\label{fig:arc_diag}
	\end{figure}
	
	\begin{figure}
	\centering
	\subcaptionbox{A bordered sutured manifold $Y(T)$ \label{fig:bsta}}{
		\labellist
		\small\hair 2pt
		\pinlabel $\mathcal{Z}_2$ [l] at 650 88
		\pinlabel $R_+$ [l] at 650 315
		\pinlabel $\mathcal{Z}_2$ [l] at 650 570
		\endlabellist
		
		\includegraphics[height=7cm]{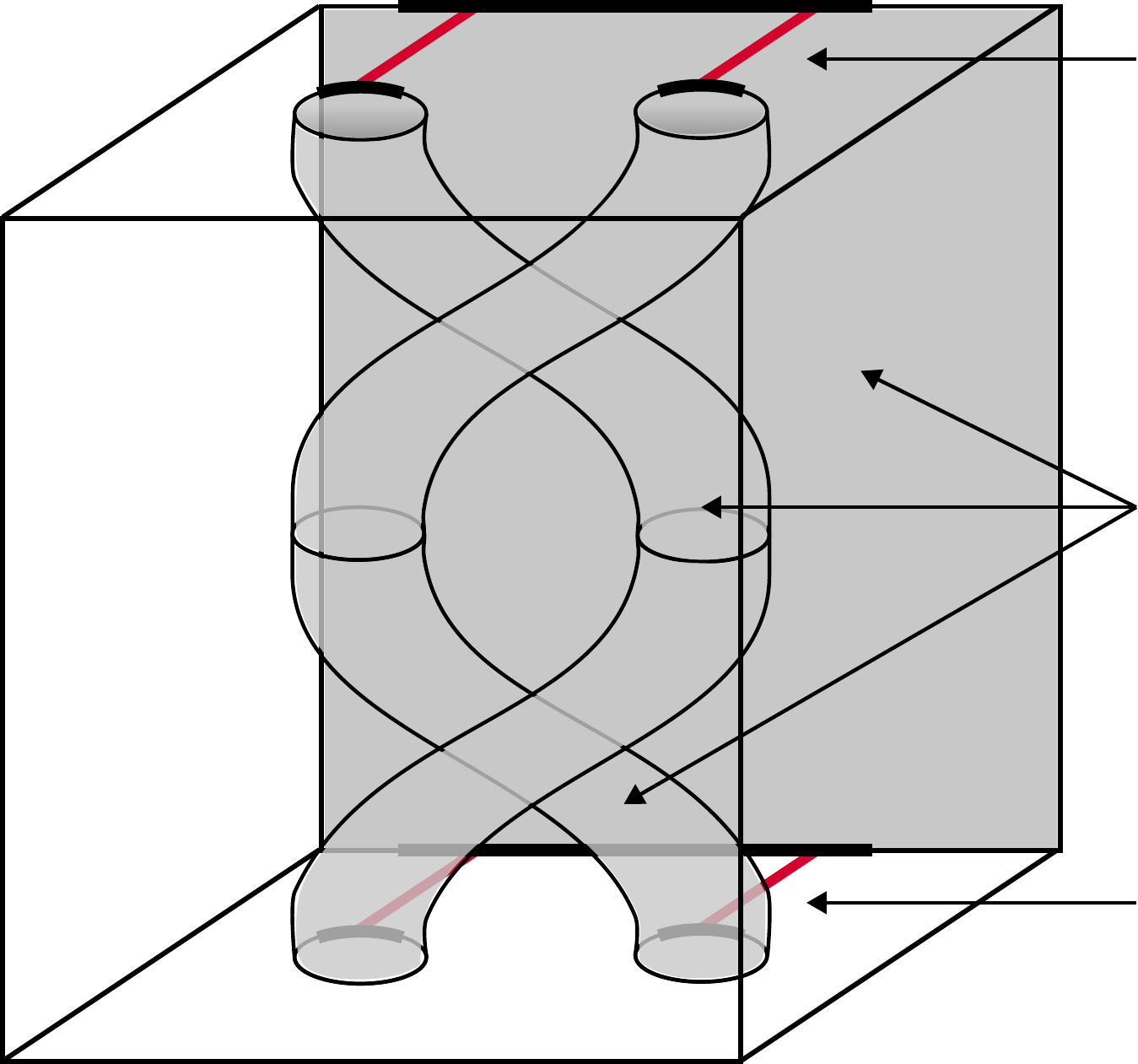}
		\hspace{.5cm}
	}
	\hspace{1cm}
	\subcaptionbox{The bordered sutured Heegaard diagram $\mathcal{H}(T)$ for $Y(T)$ (compare Figure \ref{fig:hda}) \label{fig:bstb}}{
		\labellist
		\small\hair 2pt
		\pinlabel $\alpha^a_1$ at 145 25
		\pinlabel $\alpha^a_2$ at 295 25
		\pinlabel $\alpha^a_3$  at 145 550
		\pinlabel $\alpha^a_4$  at 295 550
		\pinlabel $\beta_1$  at 130 100
		\pinlabel $\beta_2$  at 363 100
		\endlabellist
		
		\includegraphics[height=8cm]{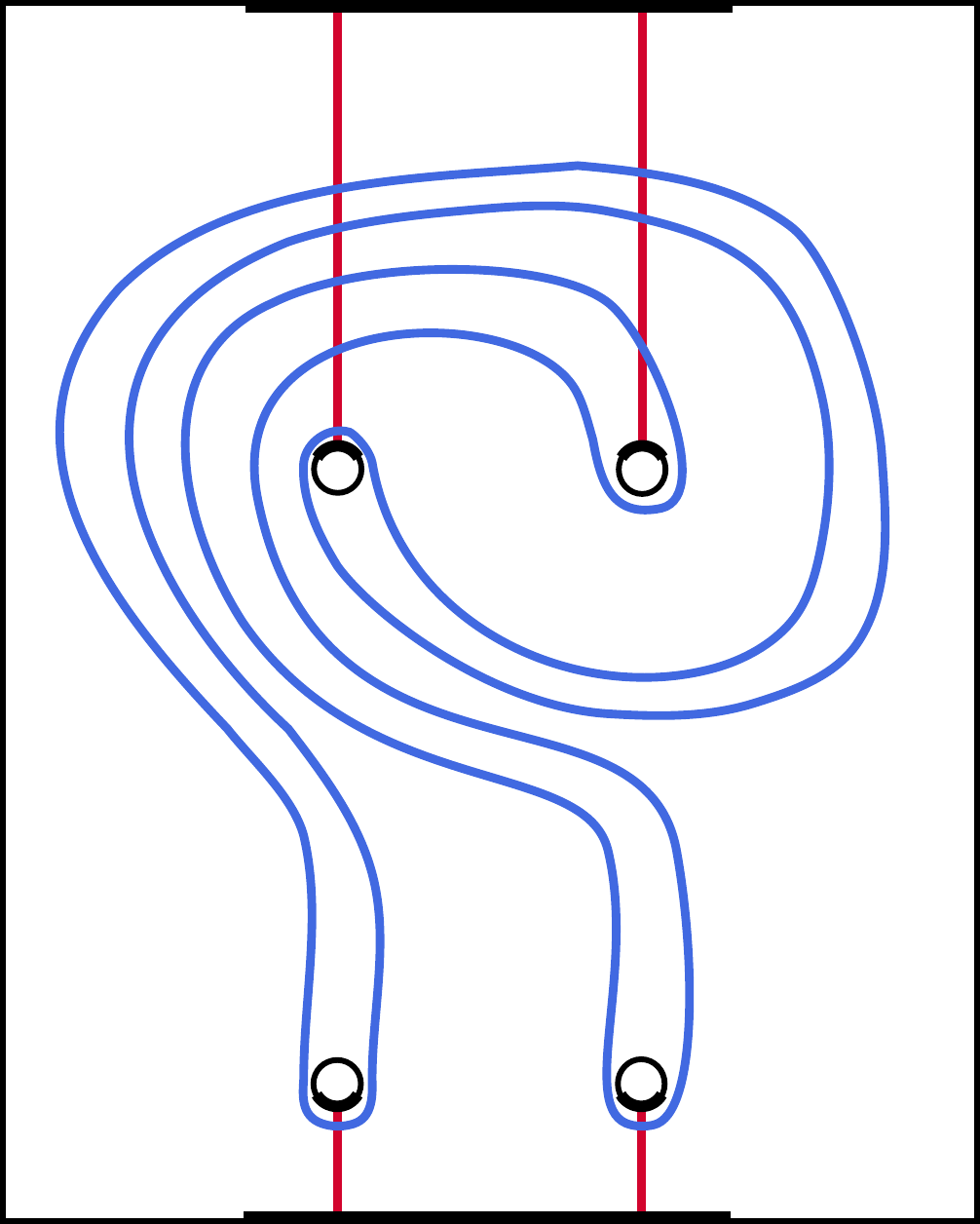}
	}
	\caption{}
	\label{fig:bst}
\end{figure}
	
	Let $\rho \in B_n$ be a braid, and let $T$ be the tangle corresponding to the {\em horizontally reflected braid} $\bar{\rho}$ of Remark \ref{rmk:closure}. In this case, $Y(T)$ admits a particularly nice bordered sutured Heegaard diagram which is closely related to $\mathcal{H}(\rho,[n],[n])$. Let $\C$ be the complex plane, with basepoint sets $z, w \subset \C$ as defined in Section \ref{sec:bridges}. As above, for each puncture $p \in z \cup w$, let $B_\epsilon(p) \subset \C$ be a small disk centered at $p$. We define a bordered sutured Heegaard diagram for $T$,
	\begin{equation}
		\label{eq:bs_diag}
		\mathcal{H}(T) = (\Sigma, \alpha^a, \beta, \mathcal{Z}, \phi),
	\end{equation}
	as follows. The Heegaard surface $\Sigma$ is the $2n$-punctured rectangle
	$$
	\Sigma = ([-1, 1] \times [-2.5, 1]) \setminus (B_\epsilon(z_1) \cup \cdots \cup B_\epsilon(z_n) \cup B_\epsilon(w_1) \cup \cdots \cup B_\epsilon(w_n)),
	$$
	with orientation induced by $\C$, and the curves $\beta = \{\beta_1, \dots, \beta_n\}$ are exactly the same as in the construction of $\mathcal{H}(\rho,[n],[n])$. The are no $\alpha^c$ curves, and there are $2n$ arcs $\alpha^a = \{\alpha^a_1, \dots, \alpha^a_{2n}\}$. For $j \leq n$, the arc $\alpha^a_j$ is a vertical line segment which travels from the bottom boundary of $\Sigma$ to the bottom of $\partial B_\epsilon(w_j)$; for $j > n$, the arc $\alpha^a_j$ is a vertical line segment which travels from the top of $\partial B_\epsilon(z_{j - n})$ to the upper boundary of $\Sigma$. The embedding
	$$
	\phi : \mathcal{Z} = \mathcal{Z}_n \sqcup \mathcal{Z}_n \to \Sigma
	$$ 
	is then determined, up to isotopy, by sending the edges in the bottom copy of $\mathcal{Z}_n$ to the arcs $\alpha^a_1, \dots, \alpha^a_n$, and the edges in the top copy of $\mathcal{Z}_n$ to the arcs $\alpha^a_{n + 1}, \dots, \alpha^a_{2n}$. Figure \ref{fig:bstb} shows the diagram $\mathcal{H}(T)$ for the bordered sutured manifold $Y(T)$ shown in Figure \ref{fig:bsta}.
	
	The diagram $\mathcal{H}(T)$ can be obtained from the diagram $\mathcal{H}(\rho,[n],[n])$ simply by removing the handles and the area of $S^2$ outside of the rectangle $[-1,1] \times [-2.5, 1]$, including the basepoints $p_L$ and $p_R$. In this way, each curve $\alpha_j$ in $\mathcal{H}(\rho,[n],[n])$ yields two arcs $\alpha^a_j$ and $\alpha^a_{j + n}$ in $\mathcal{H}(T)$. This observation leads to:
	
	\begin{lemma}
		\label{lem:int_bij}
		Fix a braid $\rho \in B_n$, and let $\mathcal{H}(T)$ be as in (\ref{eq:bs_diag}). Then the embedding $\Sigma \hookrightarrow \C$ induces a bijection between intersections of $\beta$ curves with $\alpha^a$ arcs in $\mathcal{H}(T)$, and intersections of $\beta$ curves with $\alpha$ curves in $\mathcal{H}(\rho,[n],[n])$.
	\end{lemma}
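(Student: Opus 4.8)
The plan is to carry out directly the comparison sketched in the paragraph preceding the lemma. Write $R = [-1,1] \times [-2.5,1] \subset \C$ for the rectangle underlying the Heegaard surface of $\mathcal{H}(T)$, so that the embedding $\Sigma \hookrightarrow \C$ identifies $\Sigma$ with $R$ minus the $2n$ disks $B_\epsilon(z_k)$, $B_\epsilon(w_k)$. Under this identification, for $k \le n$ the arc $\alpha^a_k$ is the vertical stub of the underbridge $u_k$ running from the bottom edge of $R$ up to $\partial B_\epsilon(w_k)$, and $\alpha^a_{k+n}$ is the stub running from $\partial B_\epsilon(z_k)$ up to the top edge of $R$; together these two arcs are exactly $\alpha_k \cap R$, where $\alpha_k$ is the corresponding curve of $\mathcal{H}(\rho,[n],[n])$. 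The complementary part $\alpha_k \setminus R$ consists of the ``return loop'' of $u_k$ (the two semicircles and the vertical descent to $\mathfrak{Im} = -3$ from \Cref{def:bridge}, all of which can be positioned outside $R$) together with the arc running through the $k$th handle. The $\beta$ curves of $\mathcal{H}(T)$ and of $\mathcal{H}(\rho,[n],[n])$ are literally the same.

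Given this, one inclusion is immediate: since each $\alpha^a_i$ is a sub-arc of the curve $\alpha_k$ with $k = i$ or $k = i-n$, and the $\beta_j$ coincide, the inclusion $\Sigma \hookrightarrow \C$ carries every point of $\beta_j \cap \alpha^a_i$ in $\mathcal{H}(T)$ to a point of $\beta_j \cap \alpha_k$ in $\mathcal{H}(\rho,[n],[n])$, injectively. The substance of the lemma is surjectivity: no intersection point of $\mathcal{H}(\rho,[n],[n])$ is lost. For this I would show that every point of $\beta_j \cap \alpha_k$ already lies in $R$, hence on $\alpha_k \cap R = \alpha^a_k \cup \alpha^a_{k+n}$.

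The key geometric input is that each overbridge $o_j = \rho(\bar o_j)$ lies in $R$: because $\rho$ fixes $\C \setminus D$ pointwise, outside the unit disk $o_j$ agrees with the vertical segment $\bar o_j$, whose imaginary part lies in $[-2,0]$, while $o_j \cap D \subset D \subset [-1,1]^2$; hence $o_j \subset [-1,1]\times[-2,1]$, and moreover $o_j$ stays a definite distance from the vertical edges $\mathfrak{Re} = \pm 1$ since $\mathfrak{Re}(z_j) \ne \pm 1$. Taking the regular neighborhoods of the overbridges small enough, each $\beta_j = \partial N(o_j)$ then lies in the interior of $R$ and is disjoint from every circle $\partial B_\epsilon(z_k)$, $\partial B_\epsilon(w_k)$ — the latter disjointness being built into the construction of $\mathcal{H}(\rho,[n],[n])$, where $\epsilon$ is chosen so the removed disks miss all $\beta$ curves. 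Consequently $\beta_j$ meets neither the part of $\alpha_k$ outside $R$ nor any handle arc, so $\beta_j \cap \alpha_k \subseteq \beta_j \cap (\alpha^a_k \cup \alpha^a_{k+n})$. Together with the previous paragraph this yields, for every $j$ and $k$, a bijection $\beta_j \cap (\alpha^a_k \cup \alpha^a_{k+n}) \cong \beta_j \cap \alpha_k$; taking the union over all $j,k$ proves the lemma.

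The main obstacle is not conceptual but the bookkeeping of pinning down the pictures: one must make sure that the ``middle'' of each underbridge and each handle arc genuinely avoids $R$, equivalently avoids every $\beta$ curve, which reduces to the explicit placement of underbridges in \Cref{def:bridge} together with the fact that $\rho$ is supported inside $D$. I would settle this by a short argument read off from \Cref{fig:bridge_diag} and \Cref{fig:hda}, rather than any computation, noting that no genericity or minimal-position assumptions are needed since the lemma concerns the diagrams exactly as constructed.
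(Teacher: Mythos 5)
Your proof is correct and follows exactly the route the paper intends: the paper leaves this lemma unproven, presenting it as a direct consequence of the observation in the preceding paragraph that $\mathcal{H}(T)$ is obtained from $\mathcal{H}(\rho,[n],[n])$ by discarding the handles and the region of $S^2$ outside $[-1,1]\times[-2.5,1]$, and your argument simply supplies the details of that observation. The key step you identify — that every $\beta$ curve lies in the interior of the rectangle and away from the $\partial B_\epsilon$ circles, so it meets each $\alpha_k$ only along $\alpha^a_k \cup \alpha^a_{k+n}$ — is precisely what makes the paper's assertion a lemma rather than a tautology, and you justify it correctly from the fact that $\rho$ is supported in $D$ and the overbridges $o_j$ therefore stay in $\text{int}(R)$.
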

	
	\begin{rmk}
		In bordered sutured Floer homology (and most other Heegaard Floer theories), it's necessary to work with Heegaard diagrams which satisfy a condition called {\em admissibility}. Since we are interested in decategorified behavior, admissibility won't be important to us in the following sections. However, we note here that by \cite[Proposition 4.4.2]{za11}, any bordered sutured Heegaard diagram can be made admissible by isotopies of the $\beta$ curves.
	\end{rmk}
	
	\subsection{A decategorified tangle Floer invariant}
	\label{sec:bstq}
	
	To any arc diagram $\mathcal{Z}$, Zarev's bordered sutured Floer homology associates a differential graded algebra $\mathcal{A}(\mathcal{Z})$. To a bordered sutured three-manifold $Y = (Y, R_+, \mathcal{Z}, \phi)$, the theory associates an $\mathcal{A}_\infty$-module $\widehat{BSA}(Y, \mathcal{Z})$ over $\mathcal{A}(\mathcal{Z})$, which also has the structure of a $\Z/2$-vector space. The module $\widehat{BSA}(Y, \mathcal{Z})$ is an invariant of $Y$ up to chain homotopy equivalence. Because we are only interested in the decategorified structure of $\widehat{BSA}(Y, \mathcal{Z})$, we will suppress much of the complexity of these objects, and we will focus on the case where $Y = Y(T)$ as in Definition \ref{def:bst}. We refer the reader to \cite{za11, lot14} for more details.
	
	Let $\rho \in B_n$ be a braid, and let $T \subset I^3$ be the tangle corresponding to the reflected braid $\bar{\rho}$. Then the invariant $\widehat{BSA}(Y(T), \mathcal{Z})$ can be computed from the bordered sutured Heegaard diagram
	$$
	\mathcal{H}(T) = (\Sigma, \alpha^a, \beta, \mathcal{Z}, \phi)
	$$ 
	of (\ref{eq:bs_diag}). Although $\widehat{BSA}(Y(T), \mathcal{Z})$ is only defined up to chain homotopy equivalence, we abuse notation slightly by using $\widehat{BSA}(Y(T), \mathcal{Z})$ here to indicate the specific $\mathcal{A}_\infty$-module determined by $\mathcal{H}(T)$. A {\em generator} $x$ of $\widehat{BSA}(Y(T), \mathcal{Z})$, as determined by $\mathcal{H}(T)$, is an $n$-tuple of points
	$$
	x = \{x_1, \dots, x_n\} \subset \Sigma
	$$ 
	such that $x_j \in \alpha^a_{k_j} \cap \beta_{\sigma(j)}$, for some permutation $\sigma \in S_n$ and some $n$-element subset $\{\alpha^a_{k_1}, \dots, \alpha^a_{k_n}\}$ of $\alpha^a$. We say $x$ {\em occupies} the arc $\alpha^a_k$ if $x_j \in \alpha^a_k$ for some $x_j \in x$, and we denote the full set of generators of $\widehat{BSA}(Y(T), \mathcal{Z})$ by $\mathcal{G}(\mathcal{H}(T))$.
	
	The $\mathcal{A}(\mathcal{Z})$ $\mathcal{A}_\infty$-module structure of $\widehat{BSA}(Y(T), \mathcal{Z})$ records information about how generators of $\widehat{BSA}(Y(T), \mathcal{Z})$ interact with the arc diagram $\mathcal{Z}$. The most basic way this happens is via a distinguished set of $2^{n + 1}$ elements of $\mathcal{A}(\mathcal{Z})$, called {\em indecomposable idempotents}. These idempotents are canonically in bijection with subsets of the edge set of $\mathcal{Z}$ \cite[Section 2.2]{za11} \cite[Exercise 1.10]{lot14}, and we label such an idempotent by $\iota_{\bf j, k}$, where ${\bf j} = \{j_1, \dots, j_r\}$ and ${\bf k} = \{k_1, \dots, k_s\}$ are multi-index subsets of $[n]$. Recalling that edges of $\mathcal{Z}$ are identified with arcs of $\alpha^a$, we associate the idempotent $\iota_{\bf j, k}$ to the arc set
	\begin{equation}
		\label{eq:edge_set}
		\{\alpha^a_{j_1}. \alpha^a_{j_2}, \dots, \alpha^a_{j_r}, \alpha^a_{n + k_1}, \alpha^a_{n + k_2}, \dots, \alpha^a_{n + k_s}\}.
	\end{equation}
	In other words, the multi-index ${\bf j}$ of $\iota_{\bf j, k}$ determines the edges corresponding to $\iota_{\bf j, k}$ in the bottom copy of $\mathcal{Z}_n$, and the multi-index ${\bf k}$ determines the edges in the top copy of $\mathcal{Z}_n$.
	
	As a vector space, $\widehat{BSA}(Y(T), \mathcal{Z})$ splits over the action of the indecomposable idempotents:
	$$
		\widehat{BSA}(Y(T), \mathcal{Z}) = \bigoplus_{{\bf j}, {\bf k} \subset [n]} \widehat{BSA}(Y(T), \mathcal{Z}) \cdot \iota_{\bf j,k}.
	$$
	For fixed multi-indices ${\bf j, k} \subset [n]$ as above, a generator $x \in \mathcal{G}(\mathcal{H}(T))$ belongs to the $\iota_{\bf j,k}$ summand if and only if its set of occupied arcs is precisely (\ref{eq:edge_set}). We denote the set of generators in the $\iota_{\bf j,k}$ summand by $\mathcal{G}(\mathcal{H}(T)) \cdot \iota_{\bf j, k}$.
	
	\begin{rmk}
		Since our arc diagram $\mathcal{Z}$ is a disjoint union of two copies of the arc diagram $\mathcal{Z}_n$, we have
		$$
		\mathcal{A}(\mathcal{Z}) \cong \mathcal{A}(\mathcal{Z}_n) \otimes \mathcal{A}(\mathcal{Z}_n).
		$$
		Zarev describes the algebra $\mathcal{A}(\mathcal{Z}_n)$, which he calls $\mathcal{A}(\mathcal{W}_n)$, in detail in \cite[Section 9.1]{za11}.
	\end{rmk}
	
	In addition to being grouped by idempotents, generators of $\widehat{BSA}(Y(T), \mathcal{Z})$ are equipped with gradings. First, each generator $x \in \mathcal{G}(\mathcal{H}(T))$ has a $\text{Spin}^c$ grading $\mathfrak{s}(x)$, analogous to the Alexander grading in knot Floer homology. Like the Alexander grading, the $\text{Spin}^c$ grading can be understood relatively, using difference classes \cite[Proof of Proposition 14]{za11}. Let $x = \{x_1, \dots, x_n\}$ and $y = \{y_1, \dots, y_n\}$ be two generators of $\mathcal{H}(T)$. Then each curve $\beta_j \in \beta$ contains exactly one point $x_r \in x$ and one point $y_s \in y$, and we let $b_j \subset \beta_j$ be an oriented arc from $x_r$ to $y_s$. Similarly, if an arc $\alpha^a_j \in \alpha^a$ is occupied by both $x$ and $y$, we let $a_j \subset \alpha^a_j$ be an arc from the relevant point of $y$ to the relevant point of $x$. If $\alpha^a_j$ is only occupied by one generator, say by $x_r \in x$, then we let $a_j \subset \alpha^a_j$ be an arc from $\partial \Sigma$ to $x_r$. Finally, if $\alpha^a_j$ is not occuppied by $x$ or $y$, then we let $a_j$ be empty. We call the union of these arcs,
	$$
	\gamma_{x,y} = a_1 \cup \cdots \cup a_{2n} \cup b_1 \cup \dots \cup b_n,
	$$
	a {\em difference cycle} for $x$ and $y$, and we observe that $\gamma_{x,y}$ is a cycle in $H_1(Y(T), \phi(\mathcal{Z}))$. With some abuse of notation, the $\text{Spin}^c$ grading can be defined as a relative $H_1(Y(T), \phi(\mathcal{Z}))$-grading by
	\begin{equation}
		\label{eq:rel_spin}
		\mathfrak{s}(x) - \mathfrak{s}(y) = [\gamma_{x,y}] \in H_1(Y(T), \phi(\mathcal{Z})).
	\end{equation}
	We let $\mathcal{G}(\mathcal{H}(T), \mathfrak{s})$ be the set of generators with $\text{Spin}^c$ grading $\mathfrak{s}$, and $\mathcal{G}(\mathcal{H}(T), \mathfrak{s}) \cdot \iota_{\bf j,k}$ the generators in both $\mathcal{G}(\mathcal{H}(T), \mathfrak{s})$ and the idempotent summand $\mathcal{G}(\mathcal{H}(T)) \cdot \iota_{\bf j,k}$.
	
	To compare $\widehat{BSA}(Y(T), \mathcal{Z})$ with the quantum $\mathfrak{gl}(1 \vert 1)$ braid representation, we simplify the $\text{Spin}^c$ grading to a relative $\Z$-grading. To this end, we note that $H_1(Y(T), \phi(\mathcal{Z}))$ is generated by the meridians of components of $T$, plus a straight line segment in $\partial I^3$ which connects the two copies of $\mathcal{Z}_n$.
	
	\begin{defn}
		\label{def:bs_grading}
		Let
		$$
		h : H_1(Y(T), \phi(\mathcal{Z})) \to \Z
		$$
		be the homomorphism which sends any positively oriented meridian of $T$ to $1$, and which sends the line segment described above to $0$. Define a relative $\Z$-grading $\widetilde{A}$ on $\mathcal{G}(\mathcal{H}(T))$, as a quotient of the relative grading (\ref{eq:rel_spin}), by
		$$
		\widetilde{A}(x) - \widetilde{A}(y) = h([\gamma_{x,y}]).
		$$
	\end{defn}
	
	In addition to the $\text{Spin}^c$ grading, the generators $\mathcal{G}(\mathcal{H}(T), \mathfrak{s})$ in any fixed $\text{Spin}^c$ structure $\mathfrak{s}$ are equipped with a relative grading \underline{gr}. This grading can be quite complicated in general, but in our case the fact that $H_1(\mathcal{Z}) = 0$ simplifies things greatly. Since $H_1(\mathcal{Z}) = 0$, \underline{gr} takes values in $\frac{1}{2}\Z$ by definition \cite[Section 2.4]{za11}. In fact, more is true:
	
	\begin{lemma}
		\label{lem:bs_maslov}
		Fix generators $x, y \in \mathcal{G}(\mathcal{H}(T), \mathfrak{s}) \cdot \iota_{\bf j,k}$. Then $x$ and $y$ occupy the same arcs in $\alpha^a$, which we enumerate as $\alpha^a_{r_1}, \dots, \alpha^a_{r_n}$. We also write $x = \{x_1, \dots, x_n\}$ and $y = \{y_1, \dots, y_n\}$, so that $x_j \in \alpha_{r_s} \cap \beta_{\sigma_x(s)}$ and $y_j \in \alpha_{r_s} \cap \beta_{\sigma_y(s)}$ for all $s \in [n]$ and some fixed permutations $\sigma_x, \sigma_y \in S_n$.
		
		Orient each $\alpha^a$ arc upward, and each $\beta$ curve counter-clockwise. Then
		$$
		\text{\underline{gr}}(x) - \text{\underline{gr}}(y) \in \Z.
		$$
		Furthermore, 
		$$
		\text{\underline{gr}}(x) - \text{\underline{gr}}(y) \equiv 0 \mod 2
		$$
		if and only if
		$$
		(-1)^{\text{sgn}(\sigma_x)} \prod_{r = 1}^n \text{sgn}(x_r) = (-1)^{\text{sgn}(\sigma_y)} \prod_{r = 1}^n \text{sgn}(y_r).
		$$
	\end{lemma}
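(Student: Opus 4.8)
The plan is to treat \Cref{lem:bs_maslov} as the bordered sutured analogue of \Cref{lem:maslov}, and to follow essentially the same argument. Given $x,y \in \mathcal{G}(\mathcal{H}(T),\mathfrak{s})\cdot\iota_{\bf j,k}$, the equality of their $\text{Spin}^c$ structures means they are connected by a domain $B$: a two-chain on $\Sigma$ whose boundary runs along the $\beta$ curves and along the arcs $\alpha^a_{r_1},\dots,\alpha^a_{r_n}$ that $x$ and $y$ occupy in common, with corners at the points of $x$ and $y$. In Zarev's framework the relative grading $\text{\underline{gr}}(x)-\text{\underline{gr}}(y)$ is the Maslov component of the grading-group element $g(B)\in G(\mathcal{Z})$ assigned to $B$, and by a Lipshitz-type index formula this component is computed combinatorially from the Euler measure of $B$ together with its local multiplicities at the points of $x$ and $y$ (see \cite{za11,lot14}). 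So the first step is to record this index formula in the form we need.

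Next I would establish integrality. Because $x$ and $y$ occupy the very same arcs of $\alpha^a$ — this is exactly the content of their lying in the same idempotent summand $\iota_{\bf j,k}$ — the half-integer corrections coming from $\partial\Sigma$ in Zarev's $\tfrac12\Z$-valued grading are identical for $x$ and $y$ and cancel in the difference, leaving $\text{\underline{gr}}(x)-\text{\underline{gr}}(y)\in\Z$. Write $\mu(B)$ for this integer. To handle the congruence, define $\mathcal{O}(x,y)\in\Z/2$ to be $0$ if $(-1)^{\text{sgn}(\sigma_x)}\prod_r\text{sgn}(x_r)=(-1)^{\text{sgn}(\sigma_y)}\prod_r\text{sgn}(y_r)$ and $1$ otherwise. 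By formula (\ref{eq:orientation}), with the $\alpha^a$ arcs oriented upward and the $\beta$ curves counter-clockwise as in the statement, the quantity $(-1)^{\text{sgn}(\sigma_x)}\prod_r\text{sgn}(x_r)$ is precisely the sign of $x$ as an intersection point of the half-dimensional tori in the symmetric product, so $\mathcal{O}(x,y)=0$ exactly when $x$ and $y$ carry the same sign; the lemma is therefore equivalent to the identity $\mu(B)\equiv\mathcal{O}(x,y)\pmod 2$.

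To prove $\mu(B)\equiv\mathcal{O}(x,y)\pmod 2$ I would argue exactly as in the proof of \Cref{lem:maslov}. Both $\mu$ and $\mathcal{O}$ are additive under composition of domains, so it suffices to verify the congruence on the elementary building blocks: embedded bigons, embedded rectangles, and — since the $\alpha^a$ are arcs with endpoints on $\partial\Sigma$ — the boundary-degenerate analogues whose side lies on $\partial\Sigma$. In each case one checks directly, using (\ref{eq:orientation}) and the combinatorial index formula, that the two generators at the corners have opposite signs and that $\mu\equiv 1$, so $\mu\equiv\mathcal{O}\equiv 1\pmod 2$. The general case then follows from a decomposition result in the spirit of \cite[Theorem 3.1]{kru24}: after a controlled isotopy of the $\beta$ curves in $\mathcal{H}(T)$ that changes neither $\mu$ nor $\mathcal{O}$, any domain from $x$ to $y$ is realized as a composition of such elementary pieces.

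The main obstacle is the presence of the boundary arcs $\alpha^a$. Concretely, I expect the work to lie in (i) confirming that Zarev's $\tfrac12\Z$-valued grading really does restrict to a $\Z$-valued relative grading on a single idempotent summand, i.e.\ that the boundary-induced half-integer shifts are constant on the summand, and (ii) checking the congruence $\mu\equiv\mathcal{O}$ for the boundary-degenerate elementary domains and verifying that the decomposition theorem of \cite{kru24} admits a version permitting pieces with a side on $\partial\Sigma$. I anticipate these to be bookkeeping matters rather than conceptual ones: the interior behaviour is governed exactly as in the closed case treated in \Cref{lem:maslov}, and the boundary contributes in a manner insensitive to which generator within the summand is chosen.
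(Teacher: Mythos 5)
Your plan correctly identifies that $x,y$ lying in the same idempotent summand means they occupy the same $\alpha^a$ arcs, but you only use this to argue that the half-integer boundary contributions to $\underline{\text{gr}}$ cancel in the difference. The paper extracts much more: since $x$ and $y$ occupy the same arcs, any difference cycle $\gamma_{x,y}$ consists of closed immersed curves disjoint from $\partial\Sigma$, and therefore any domain $\mathcal{D}$ connecting $x$ to $y$ has zero multiplicity in every region adjacent to $\partial\Sigma$. In other words, $\mathcal{D}$ avoids $\partial\Sigma$ altogether. Once this is observed, Zarev's combinatorial formula for $\underline{\text{gr}}(x)-\underline{\text{gr}}(y)$ literally coincides with the ordinary Maslov index formula on the domain, and the whole argument reduces to the non-bordered case already handled in \Cref{lem:maslov}. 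There is nothing boundary-degenerate left to analyze.

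As a consequence, the portion of your plan that you flag as the ``main obstacle'' --- checking the congruence $\mu\equiv\mathcal{O}$ for boundary-degenerate elementary pieces, and verifying that the decomposition result of \cite{kru24} admits a version permitting pieces with a side on $\partial\Sigma$ --- addresses a difficulty that does not arise. That is the gap: without noticing that $\mathcal{D}$ avoids $\partial\Sigma$, you are committed to a genuinely harder argument (and one you have not carried out), whereas with that observation the proof becomes a near-verbatim repetition of \Cref{lem:maslov}. The only residual care needed, which the paper acknowledges, is that the Heegaard surface itself has boundary so one must check that Krutowski's decomposition argument still applies to a domain supported in the interior; but this is a far smaller matter than inventing a boundary-degenerate calculus.
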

	
	Lemma \ref{lem:bs_maslov} says that, when restricted to $\mathcal{G}(\mathcal{H}(T), \mathfrak{s}) \cdot \iota_{\bf j,k}$, the relative grading \underline{gr} behaves like the Maslov grading in Lemma \ref{lem:maslov}.
	
	\begin{proof}
		Let $x$ and $y$ be as in the statement of the lemma. Since $x$ and $y$ have the same $\text{Spin}^c$ grading, there exists a domain $\mathcal{D}$ in $\mathcal{H}(T)$ connecting $x$ to $y$ \cite[Proposition 4.14]{za11}. The boundary of this domain is a difference cycle $\gamma_{x,y}$ for $x$ and $y$, and since $x$ and $y$ occupy the same $\alpha^a$ arcs, $\gamma_{x,y}$ is a collection of immersed closed curves in $\mathcal{H}(T)$. In other words, $\mathcal{D}$ avoids $\partial \Sigma$. In this case, the defining combinatorial formula for $\text{\underline{gr}}(x) - \text{\underline{gr}}(y)$ matches the combinatorial formula for the Maslov index, and the claim follows from the same argument used to prove Lemma \ref{lem:maslov}. Although Krutowski's result \cite[Theorem 3.1]{kru24} cited there is proven only in the non-bordered framework, a close reading shows Krutowski's argument works in our case as well.
	\end{proof}
	
		Lemma \ref{lem:bs_maslov} motivates the following definition.
	
	\begin{defn}
		Fix multi-indices ${\bf j,k} \subset [n]$, and let $x = \{x_1, \dots, x_r\} \in \mathcal{G}(\mathcal{H}(T)) \cdot \iota_{\bf j,k}$ be an arbitrary generator. Using the notation of Lemma \ref{lem:bs_maslov}, define an absolute $\Z/2$-grading $\underline{M}$ on $\mathcal{G}(\mathcal{H}(T)) \cdot \iota_{\bf j,k}$ by
		$$
		\underline{M}(x) = \begin{cases}
			0 &  \text{if } (-1)^{\text{sgn}(\sigma_x)} \prod_{r = 1}^n \text{sgn}(x_r) = 1 \\
			1  & \text{if } (-1)^{\text{sgn}(\sigma_x)} \prod_{r = 1}^n \text{sgn}(x_r) = -1 
		\end{cases}.
		$$
	\end{defn}
	
	With the gradings $A$ and $\underline{M}$ in place, we now propose a ``decategorification'' of each summand $\widehat{BSA}(Y(T), \mathcal{Z}) \cdot \iota_{\bf j, k}$.
	
	\begin{defn}
		\label{def:t_alex}
		Let $\mathcal{H}$ be {\em any} bordered sutured Heegaard diagram for the bordered sutured manifold $Y(T)$, and fix an absolute grading $A$ lifting the relative grading $\widetilde{A}$ on generators $\mathcal{G}(\mathcal{H})$ of $\mathcal{H}$. Given multi-indices ${\bf j, k} \subset [n]$, define a polynomial $\Delta_{T, {\bf j}, {\bf k}} \in \Z[t^{\pm 1}]$ by
		$$
		\Delta_{T, {\bf j}, {\bf k}}(t) = \sum_{x \in \mathcal{G}(\mathcal{H}) \cdot \iota_{\bf j, k}} (-1)^{\underline{\text{M}}(x)} t^{A(x)}.
		$$
	\end{defn}
	
	Although we've only defined the gradings $\underline{M}$ and $\widetilde{A}$ for our specific Heegaard diagram $\mathcal{H}(T)$, the same definitions work for any Heegaard diagram for $Y(T)$, and we make this extension implicitly in Definition \ref{def:t_alex}. It also easy to generalize the definition of $\Delta_{T, {\bf j}, {\bf k}}$ to tangles which are not braids, but we leave this to the reader. The polynomial $\Delta_{T, {\bf j},{\bf k}}$ is our decategorified invariant.
	
	\begin{prop}
		For fixed ${\bf j, k} \subset [n]$, the polynomial $\Delta_{T, {\bf j},{\bf k}}(t)$ of Definition \ref{def:t_alex} is an invariant of the bordered sutured manifold $Y(T)$, up to multiplication by a unit of $\Z[t, t^{-1}]$.
	\end{prop}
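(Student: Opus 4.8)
The plan is to recognize $\Delta_{T,{\bf j},{\bf k}}(t)$ as a graded Euler characteristic of Zarev's invariant and then to reduce the statement to the invariance of that invariant. Fix the bordered sutured Heegaard diagram $\mathcal{H}(T)$ of (\ref{eq:bs_diag}), or indeed any bordered sutured Heegaard diagram for $Y(T)$, and consider the underlying chain complex $(\widehat{BSA}(Y(T), \mathcal{Z}) \cdot \iota_{\bf j,k}, m_1)$, where $m_1$ is the internal ($\mathcal{A}_\infty$) differential. I would first record three compatibility facts. (i) The differential $m_1$ preserves the idempotent splitting: it counts only \emph{provincial} holomorphic curves, which do not change the set of $\alpha^a$-arcs a generator occupies, so a generator and its $m_1$-image lie in the same $\iota_{\bf j,k}$-summand (this is standard for the provincial differential in bordered sutured Floer homology, and should be cited from \cite{za11}). (ii) The differential $m_1$ preserves the relative grading $\widetilde{A}$: by \Cref{def:bs_grading}, $\widetilde{A}$ is a quotient of the relative $\text{Spin}^c$ grading $\mathfrak{s}$, and $\mathfrak{s}$ is preserved by $m_1$, exactly as in \Cref{lem:alex_disc}. (iii) The differential $m_1$ reverses $\underline{M}$: within a fixed $\text{Spin}^c$ structure and idempotent summand, $m_1$ drops the relative grading $\underline{\text{gr}}$ by one, and by \Cref{lem:bs_maslov} the mod-$2$ reduction of $\underline{\text{gr}}$ agrees with $\underline{M}$ up to an overall constant on that summand. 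Together (i)--(iii) show that $m_1$ cancels generators only in pairs of equal $\widetilde{A}$ and opposite $\underline{M}$; hence the sum in \Cref{def:t_alex}, computed at the chain level, equals the corresponding graded Euler characteristic of the homology $H_*\big(\widehat{BSA}(Y(T), \mathcal{Z}) \cdot \iota_{\bf j,k}\big)$, taken with respect to its induced relative $\widetilde{A}$-grading and its $\underline{M}$-grading.

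Next I would appeal to Zarev's theorem that $\widehat{BSA}(Y(T), \mathcal{Z})$ is well defined up to $\mathcal{A}_\infty$ homotopy equivalence of $\mathcal{A}(\mathcal{Z})$-modules, and that such equivalences respect the decomposition into idempotent summands as well as the relative gradings $\mathfrak{s}$ and $\underline{\text{gr}}$; in our setting $H_1(\mathcal{Z}) = 0$, so these gradings take values in $\Z$ and $\tfrac12\Z$ and behave just as in ordinary Heegaard Floer homology. It follows that the bigraded $\Z/2$-vector space $H_*\big(\widehat{BSA}(Y(T), \mathcal{Z}) \cdot \iota_{\bf j,k}\big)$, graded by the relative grading $\widetilde{A}$ and by $\underline{M} \bmod 2$, is an invariant of $Y(T)$. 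Its graded Euler characteristic is therefore an invariant as well; choosing an absolute lift $A$ of the relative grading $\widetilde{A}$ pins this down to a Laurent polynomial in $\Z[t^{\pm 1}]$, well defined up to a power of $t$ (the ambiguity in the lift $A$) and up to an overall sign (the overall normalization of $\underline{M}$) — that is, up to a unit of $\Z[t, t^{-1}]$. Combined with the previous paragraph, the polynomial $\Delta_{T,{\bf j},{\bf k}}(t)$ of \Cref{def:t_alex}, computed from $\mathcal{H}(T)$ or from any other bordered sutured Heegaard diagram for $Y(T)$, represents this invariant, which is exactly the assertion of the proposition.

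I expect the main obstacle to be bookkeeping with the gradings rather than any deep geometric input: one must check that Zarev's relative gradings $\mathfrak{s}$ and $\underline{\text{gr}}$ — and in particular the diagram-defined grading $\underline{M}$ — transform correctly under the chain homotopy equivalences implementing invariance, and that the mod-$2$ reduction used to define $\underline{M}$ is genuinely the diagram-independent object supplied by \Cref{lem:bs_maslov}. A secondary point requiring care is the precise citation that $m_1$ preserves idempotents and the relative $\text{Spin}^c$ grading in the bordered sutured setting. Once these structural facts are in place, the proposition follows formally from the chain homotopy invariance of $\widehat{BSA}$ together with the elementary observation that a graded Euler characteristic is unchanged under passage to homology.
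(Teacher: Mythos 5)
Your proof follows essentially the same route as the paper, but it has a gap at the crucial step, one that the paper explicitly flags and you largely elide. Your items (i)--(iii) and the reduction to a homology-level graded Euler characteristic are correct, and identical in substance to the paper's first few sentences. The problem is the transition to ``It follows that the bigraded $\Z/2$-vector space \ldots graded by \ldots $\underline{M} \bmod 2$ is an invariant of $Y(T)$.'' Zarev's invariance theorem guarantees that the chain homotopy equivalences respect the idempotent decomposition, the $\text{Spin}^c$ grading $\mathfrak{s}$, and the relative grading $\underline{\text{gr}}$ \emph{within each $\text{Spin}^c$ structure}. This controls $\underline{M}$ only within each $\text{Spin}^c$ class (equivalently, each power of $t$), via \Cref{lem:bs_maslov}. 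What it does \emph{not} control is the relative sign between generators in different $\text{Spin}^c$ structures, i.e.~the relative sign between coefficients of different powers of $t$ in $\Delta_{T,{\bf j},{\bf k}}(t)$. The grading $\underline{M}$ is an absolute $\Z/2$-grading across all of $\mathcal{G}(\mathcal{H}(T)) \cdot \iota_{\bf j,k}$, defined from orientation data in a specific diagram, and it encodes genuinely more information than $\underline{\text{gr}}$ does. Showing that it is preserved as a \emph{relative} grading across $\text{Spin}^c$ structures under Heegaard moves is the actual content of the invariance claim; it does not formally follow from Zarev's theorem.

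The paper is careful on exactly this point: it reduces the statement to ``$\underline{M}$ is invariant when comparing elements \ldots across different $\text{Spin}^c$ gradings,'' then reduces further to checking that Zarev's chain homotopy equivalences for handleslides, isotopies, and (de)stabilizations preserve $\underline{M}$ as a relative grading (and admits to omitting that check for space). Your closing paragraph gestures at this as ``bookkeeping,'' but the phrasing that ``the mod-$2$ reduction used to define $\underline{M}$ is genuinely the diagram-independent object supplied by \Cref{lem:bs_maslov}'' is a mischaracterization: \Cref{lem:bs_maslov} supplies diagram-independence only within a single $\text{Spin}^c$ class. To close the argument you need to say explicitly that an additional, move-by-move verification is required --- that $\underline{M}$ is preserved (as a relative $\Z/2$-grading across $\text{Spin}^c$ structures) under each of the standard Heegaard moves --- and that this verification is what grants the ``unit of $\Z[t, t^{-1}]$'' ambiguity and nothing worse.
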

	
	\begin{proof}
		Since the $\mathcal{A}_\infty$-module $\widehat{BSA}(Y(T), \mathcal{Z})$ is an invariant of $Y(T)$ up to chain homotopy equivalences which respect the module structure, it makes sense to discuss the summand $\widehat{BSA}(Y(T), \mathcal{Z}) \cdot \iota_{\bf j, k}$ for ${\bf j, k} \subset [n]$. Considering $\widehat{BSA}(Y(T), \mathcal{Z}) \cdot \iota_{\bf j, k}$ as a chain complex, this complex decomposes as a direct sum along $\text{Spin}^c$-structures, hence along $A$ gradings. Additionally, within each $\text{Spin}^c$ summand the differential decreases the grading \underline{gr} by one, hence also the grading $\underline{M}$ by Lemma \ref{lem:bs_maslov}. Therefore, to show $\Delta_{T, {\bf j}, {\bf k}}$ is an invariant of $\widehat{BSA}(Y(T), \mathcal{Z}) \cdot \iota_{\bf j, k}$, it suffices to show the grading $\underline{M}$ is invariant when comparing elements of $\widehat{BSA}(Y(T), \mathcal{Z}) \cdot \iota_{\bf j, k}$ across different $\text{Spin}^c$ gradings. Since $\Delta_{T, {\bf j},{\bf k}}$ is only determined up to a unit of $\Z[t, t^{-1}]$, we need only show $\underline{M}$ is invariant as a relative grading.
		
		Equivalently, let $\mathcal{H}'$ be any bordered sutured Heegaard diagram for $Y(T)$, and \break $\widehat{BSA}(Y(T), \mathcal{Z})'$ the resulting $\mathcal{A}_\infty$-module. Let $\widehat{BSA}(Y(T), \mathcal{Z})$ denote the module determined by $\mathcal{H}(T)$. We must show that there exists a chain homotopy equivalence
		$$
		\widehat{BSA}(Y(T), \mathcal{Z}) \to \widehat{BSA}(Y(T), \mathcal{Z})'
		$$
		such that, for all $x, y \in \mathcal{G}(\mathcal{H}(T)) \cdot \iota_{\bf j,k}$, if $x'$ and $y'$ denote their respective images under this equivalence, then
		$$
		\underline{M}(x) - \underline{M}(y) = \underline{M}(x') - \underline{M}(y') \in \Z/2. 
		$$
		
		Since this is a decategorified statement, it is not too difficult to prove. Any two bordered sutured Heegaard diagrams for $Y(T)$ are related by a sequence of handleslides, diagram isotopies, stabilizations and destabilizations \cite[Proposition 4.5]{za11}. In each case, Zarev gives a graded chain homotopy equivalence of the $\mathcal{A}_\infty$-modules before and after each move \cite[Theorem 7.8]{za11} (see also \cite{lot18, lip06, os04c}). One can check directly that each of these chain homotopy equivalences respects $\underline{M}$ as a relative grading. (Stabilizations and destabilizations can result in a global shift of $\underline{M}$ as an absolute grading.) For the sake of space, we omit this checking here.
	\end{proof}
	
	\subsection{Bordered sutured Floer homology and quantum $\mathfrak{gl}(1 \vert 1)$}
	\label{sec:bstqq}
	
	In this section we relate our decategorified invariant $\Delta_{T, {\bf j, k}}$ of $\widehat{BSA}(Y(T), \mathcal{Z})$ to the $U_q(\mathfrak{gl}(1 \vert 1))$ braid representation, proving Theorem \ref{thm:main_uq2}. The arguments are similar to those in Section \ref{sec:last}.
	
	First, we use Lemma \ref{lem:int_bij} to give an absolute grading
	$$
	A : \mathcal{G}(\mathcal{H}(T)) \to \Z,
	$$ 
	lifting the relative grading $\widetilde{A}$ of Definition \ref{def:bs_grading}. Let $A_1^\text{loc}$ be the local grading preceeding Definition \ref{def:grading}; then, via the bijection of Lemma \ref{lem:int_bij}, $A_1^\text{loc}$ induces a map
	$$
	A_1^\text{loc} : \{x \in \alpha^a_j \cap \beta_k \mid \alpha^a_j \in \alpha^a, \beta_k \in \beta \} \to \Z.
	$$
	\begin{defn}
		\label{def:bs_abs}
		Define an absolute $\Z$-grading $A$ on the generators of $\mathcal{H}(T)$ by
		$$
		A(x) = \sum_{x_j \in x} A^\text{loc}_1(x_j).
		$$
	\end{defn}
	
	The next lemma shows $A$ is a lift of $\widetilde{A}$.
	
	\begin{lemma}
		\label{lem:gr_again}
		Let $x$ and $y$ be two generators of $\mathcal{H}(T)$, and let $\gamma_{x,y}$ be a difference cycle. Then
		$$
		A(x) - A(y) = h([\gamma_{x,y}]),
		$$
		where $h$ is the homomorphism of Definition \ref{def:bs_grading}.
	\end{lemma}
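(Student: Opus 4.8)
The plan is to mirror the proof of \Cref{lem:grading}, adapting it to the bordered sutured setting. As there, I would first reduce the computation of $h([\gamma_{x,y}])$ to a signed count of intersections of the difference cycle with a suitable collection of properly embedded arcs in $\Sigma$. Concretely, for each $r \in [n]$ let $\bar\ell_r$ be the arc obtained by restricting the arc of the same name from the proof of \Cref{lem:grading} to the rectangle $[-1,1]\times[-2.5,1]$ underlying $\Sigma$, rerouted near $\partial \Sigma$ so that it becomes a properly embedded arc with endpoints in $\phi(\mathcal{Z})$, running parallel to the $\alpha^a$ arcs associated with $z_r$ and $w_r$ and to the right of them. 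Using the long exact sequence of the pair, $H_1(Y(T),\phi(\mathcal{Z})) \cong \Z^{n+1}$ is generated by the meridians $\delta(p)$, $p \in z \cup w$, together with the segment $c$ connecting the two copies of $\mathcal{Z}_n$, and $h$ is pinned down by $h(\delta(p)) = 1$ and $h(c) = 0$. Checking the identity
$$
h(e) = \sum_{r = 1}^n e \cdot \bar\ell_r
$$
on this spanning set then gives it for every class carried by $\Sigma$.

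Second, I would choose the difference cycle $\gamma_{x,y} = a_1 \cup \cdots \cup a_{2n} \cup b_1 \cup \cdots \cup b_n$ as carefully as in \Cref{lem:grading}: each $\beta$-subarc $b_j$ is taken inside $\mathring{\bf D}_n$ when neither of its endpoints is an anchor point, and immediately to the left of the anchor point otherwise; each $\alpha^a$-subarc $a_j$ --- whether a full arc joining a point of $x$ to a point of $y$, or a partial arc from $\partial\Sigma$ to a single occupied point --- is routed to stay close to its $\alpha^a$ arc, hence disjoint from every $\bar\ell_r$. With this choice, $\gamma_{x,y} \cdot \bar\ell_r = \sum_j b_j \cdot \bar\ell_r = \sum_j b_j \cdot \ell_r$, where $\ell_r$ is the $r$th cut arc, since all of these intersections occur inside $\mathring{\bf D}_n$. \Cref{lem:s_intersect} converts $\sum_r b_j \cdot \ell_r$ into a difference of sheet values, and via the bijection of \Cref{lem:int_bij} together with the definition of $A_1^\text{loc}$ (with $A_1^\text{loc}$ of an anchor point equal to $0$, and the same four-case split on anchor points as in \Cref{lem:grading}) this equals $A_1^\text{loc}$ of the relevant point of $x$ minus that of $y$. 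Summing over $j$ gives $A(x) - A(y) = h([\gamma_{x,y}])$.

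The main obstacle, and the only genuinely new point relative to \Cref{lem:grading}, is the bookkeeping for the partial $\alpha^a$-subarcs: when an arc $\alpha^a_j$ is occupied by exactly one of $x$ and $y$, the difference cycle picks up an arc running from $\partial\Sigma$ into the interior, so $[\gamma_{x,y}]$ can acquire a component along the connecting segment $c$. The key point is that this component lies in $\ker(h)$, so it is invisible to the computation; this is exactly why the rerouting of the $\bar\ell_r$ near $\partial\Sigma$ and the verification $h(c) = 0$ have to be carried out with some care. Everything else is a routine transcription of the earlier argument, together with the observation that the gradings $A_2$ and $h_2$ from \Cref{sec:gradings} have no analog here, since the basepoints $p_L$ and $p_R$ have been deleted in passing from $\mathcal{H}(\rho,[n],[n])$ to $\mathcal{H}(T)$.
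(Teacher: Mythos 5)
Your proposal is correct and follows essentially the same route as the paper: the paper also introduces arcs parallel to and to the right of the $\alpha^a$ arcs, verifies that $h$ counts intersections with these arcs, arranges the difference cycle so that its $\alpha^a$-subarcs and the $\beta$-subarcs near anchor points contribute nothing below the $z$-line, and then invokes \Cref{lem:s_intersect} and the definition of $A_1^\text{loc}$. The only cosmetic difference is that the paper keeps $2n$ separate arcs $\ell_1,\dots,\ell_{2n}$ (one per $\alpha^a$ arc) while you bundle them in pairs by restricting the $\bar\ell_r$ of \Cref{lem:grading}. You also flag and dispatch the one genuinely new wrinkle --- the possible component of $[\gamma_{x,y}]$ along the segment $c$ joining the two copies of $\mathcal{Z}_n$ when an $\alpha^a$ arc is occupied by only one generator --- which the paper handles silently through its definition of $h$ with $h(c)=0$; making this explicit is a small improvement in clarity, not a change of strategy.
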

	
	\begin{proof}
		We follow the proof of Lemma \ref{lem:grading}. Define a set of properly embedded, oriented arcs $\ell_1, \dots, \ell_{2n} \subset \Sigma$ so that $\ell_j$ is parallel and to the right of the arc $\alpha^a_j$, and oriented vertically upward in $\Sigma$. From the definition of $h$, we have
		$$
		h([\gamma_{x,y}]) = \sum_{j = 1}^{2n} \gamma_{x,y} \cdot \ell_j.
		$$
		In fact, as in the proof of Lemma \ref{lem:grading}, we can choose the difference class $\gamma_{x,y}$ so that $\gamma_{x,y} \cap \ell_j = \varnothing$ for all $j \leq n$, so that we have
		$$
		h([\gamma_{x,y}]) = \sum_{j = n}^{2n} \gamma_{x,y} \cdot \ell_j.
		$$
		It then follows from the definition of $A$ and $A^\text{loc}_1$, and from Lemma \ref{lem:s_intersect}, that
		$$
		A(x) - A(y) = \sum_{j = n}^{2n} \gamma_{x,y} \cdot \ell_j
		$$
		as well.
	\end{proof}
	
	Finally, we prove the module $\widehat{BSA}(Y(T), \mathcal{Z})$ recovers the $U_q(\mathfrak{gl}(1 \vert 1))$ braid representation. In fact, we prove that $\widehat{BSA}(Y(T), \mathcal{Z})$ recovers the representation $\varphi^\wedge_n$; by Proposition \ref{prop:isom}, this is an equivalent statement. In the theorem below, for a multi-index ${\bf j} \subset [n]$, we let ${\bf j}^* \subset [n]$ denote its complement $[n] \setminus {\bf j}$.
	
	\begin{thm}
		\label{thm:last_one}
		Let $\rho \in B_n$ be a braid, and $T \subset I^3$ the tangle corresponding to $\bar{\rho}$. Then, for all multi-indices ${\bf j, k} \subset [n]$, we have
		$$
		(\varphi^\wedge_n(\rho))^{\bf j}_{\bf k} = \Delta_{T, {\bf j}^*, {\bf k}}(t)
		$$
		up to a unit of $\Z[t, t^{-1}]$.
	\end{thm}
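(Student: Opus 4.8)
The plan is to run the argument of \Cref{thm:clear} one more time, with the chain complex $\widehat{CF}(\mathcal{H}(m(K)))$ there replaced by the summand $\widehat{BSA}(Y(T),\mathcal{Z})\cdot\iota_{{\bf j}^*,{\bf k}}$ computed from $\mathcal{H}(T)$, and the state-sum expansion of $\widehat{Q}(K)$ replaced by the expansion of $\psi^\wedge_n(\rho)^{\bf j}_{\bf k}$ supplied by \Cref{lem:ugly} (the $\varphi^\wedge_n$ of the statement is the exterior-power Burau representation $\psi^\wedge_n$ of \Cref{sec:last}, so by \Cref{prop:isom} these are the same map). First I would record the two sides in comparable form. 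Writing $m=|{\bf j}|=|{\bf k}|$ --- both sides of the asserted identity vanish unless these agree, by the idempotent description (\ref{eq:edge_set}) --- \Cref{lem:ugly} gives
$$
\psi^\wedge_n(\rho)^{\bf j}_{\bf k}=\sum_{\tau\in S_m}\ \sum_{\{y_1,\dots,y_m\}\in\Gamma^{\bf j,k}_\tau(\rho)}(-1)^{\text{sgn}(\tau)}\prod_{r=1}^m\text{sgn}(y_r)\,t^{S(y_r)},
$$
while on the Floer side \Cref{def:t_alex}, \Cref{lem:gr_again}, the identity $(-1)^{\underline{M}(x)}=(-1)^{\text{sgn}(\sigma_x)}\prod_r\text{sgn}(x_r)$ built into the definition of $\underline{M}$, and the absolute grading $A$ of \Cref{def:bs_abs} (any other absolute lift of $\widetilde{A}$ differs only by a power of $t$, hence by a unit, so this choice is harmless) together yield
$$
\Delta_{T,{\bf j}^*,{\bf k}}(t)=\sum_{x\in\mathcal{G}(\mathcal{H}(T))\cdot\iota_{{\bf j}^*,{\bf k}}}(-1)^{\text{sgn}(\sigma_x)}\Big(\prod_{x_r\in x}\text{sgn}(x_r)\Big)t^{\sum_{x_r\in x}A_1^{\text{loc}}(x_r)}.
$$

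The heart of the argument is an explicit bijection $f$ between the index set $\mathcal{G}(\mathcal{H}(T))\cdot\iota_{{\bf j}^*,{\bf k}}$ and the pairs $(\tau,\{y_1,\dots,y_m\})$ above. A generator $x\in\mathcal{G}(\mathcal{H}(T))\cdot\iota_{{\bf j}^*,{\bf k}}$ occupies exactly the arcs $\{\alpha^a_{j^*}:j^*\in{\bf j}^*\}$ (the ``$w$-side'' arcs, one near each $w_{j^*}$) together with $\{\alpha^a_{n+k}:k\in{\bf k}\}$ (the ``$z$-side'' arcs, one near each $z_k$). By the dichotomy established just before \Cref{def:anchor} for $\mathcal{H}(\rho,[n],[n])$ and transported to $\mathcal{H}(T)$ via \Cref{lem:int_bij}, every intersection of a $\beta$ curve with a $w$-side arc $\alpha^a_{j^*}$ is the image of the $j^*$-th anchor point --- it lies on $\beta_{j^*}$, has $A_1^{\text{loc}}=0$, and has sign $-1$ --- while every intersection with a $z$-side arc $\alpha^a_{n+k}$ lies in $\mathring{\bf D}_n$. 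Hence occupying $\alpha^a_{j^*}$ forces $x$ to carry the $j^*$-th anchor point, which uses up precisely the curves $\{\beta_{j^*}:j^*\in{\bf j}^*\}$; the surviving curves $\{\beta_j:j\in{\bf j}\}$ are then matched to the arcs $\{\alpha^a_{n+k}:k\in{\bf k}\}$ by a bijection $\tau$ of ${\bf j}$ with ${\bf k}$, and \Cref{lem:relating} identifies the remaining $z$-side points of $x$ with an element $\{y_1,\dots,y_m\}\in\Gamma^{\bf j,k}_\tau(\rho)$. Conversely, completing any element of $\Gamma^{\bf j,k}_\tau(\rho)$ by the anchor points indexed by ${\bf j}^*$ produces a generator of the $\iota_{{\bf j}^*,{\bf k}}$-summand, so $f$ is a bijection --- exactly parallel to the one in the proof of \Cref{thm:clear}.

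It then remains to match weights summand by summand. Under $f$, the anchor points contribute $(-1)^{|{\bf j}^*|}$ to $\prod_{x_r\in x}\text{sgn}(x_r)$ and $0$ to $\sum_{x_r\in x}A_1^{\text{loc}}(x_r)$, while the $z$-side points contribute $\prod_r\text{sgn}(y_r)\,t^{S(y_r)}$ by case (ii) of the definition of $A_1^{\text{loc}}$ (where $A_1^{\text{loc}}=S$); and $\text{sgn}(\sigma_x)$ differs from $\text{sgn}(\tau)$ only by the fixed sign of the shuffle interleaving ${\bf j}^*$ with ${\bf j}$ inside $[n]$, which depends on ${\bf j}$ alone. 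Absorbing $(-1)^{|{\bf j}^*|}$ times this shuffle sign into a unit of $\Z[t,t^{-1}]$, the two displayed sums become equal summand-by-summand, giving $(\varphi^\wedge_n(\rho))^{\bf j}_{\bf k}=\Delta_{T,{\bf j}^*,{\bf k}}(t)$ up to a unit. I expect the only real friction to be this last bookkeeping --- confirming that the anchor points are genuinely forced and genuinely invisible to $A_1^{\text{loc}}$, and relating the global permutation $\sigma_x\in S_n$ to the local permutation $\tau\in S_m$ --- but since the identity is only claimed up to a unit, neither the residual signs nor the base point of the Alexander grading need to be pinned down, and the substance of the proof is the geometric bijection $f$, which is the one already used for \Cref{thm:clear}.
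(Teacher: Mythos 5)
Your proposal is correct and follows essentially the same route as the paper's proof: it establishes the vanishing unless $|{\bf j}|=|{\bf k}|$ via the idempotent constraint, constructs the same bijection by peeling off the anchor points forced on the arcs $\alpha^a_j$ with $j \le n$ and applying \Cref{lem:relating,lem:int_bij} to the remaining points, and then matches weights using \Cref{lem:ugly,lem:bs_maslov,lem:gr_again}. Your bookkeeping is slightly more explicit than the paper's about the shuffle sign relating $\sigma_x \in S_n$ to the local permutation $\tau \in S_m$ (the paper records only the anchor-point contribution $(-1)^{|{\bf j}^*|}$), but since the identity is asserted only up to a unit of $\Z[t,t^{-1}]$ both treatments suffice.
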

	
	\begin{proof}
		The proof of Theorem \ref{thm:last_one} is analogous to the proof of Theorem \ref{thm:clear}. First, since there are $n$ $\beta$ curves in $\mathcal{H}(T)$, any generator $x \in \mathcal{G}(\mathcal{H}(T))$ occupies $n$ $\alpha^a$ arcs. Thus $\Delta_{T, {\bf j}^*, {\bf k}}$ is zero unless
		$$
		|{\bf j}^*| + |{\bf k}| = n,
		$$
		or equivalently $|{\bf j}| = |{\bf k}|$. This is consistent with the behavior of $\varphi^\wedge_n$, and with this in mind let $|{\bf j}| = |{\bf k}| = m$. Let
		$$
		x = \{x_1, \dots, x_n\} \in \mathcal{G}(\mathcal{H}) \cdot \iota_{{\bf j}^*,{\bf k}}
		$$
		be a generator, and index the elements of $x$ so that $x_r \in \beta_r$ for all $r$. By definition, $x$ occupies the arc $\alpha^a_{j}$ for all $j \in {\bf j}^*$. Each of these arcs intersects only one $\beta$ curve, $\beta_j$, and only in one place---these are intersections which map to anchor points under the correspondence of Lemma \ref{lem:int_bij}. It follows that, for all $r \in [n]$, $x_r$ lies on an arc $\alpha^a_j$ with $j \leq n$ if and only $r \in {\bf j}^*$. Equivalently, $x_r$ lies on an arc $\alpha^a_j$ with $j > n$ if and only if $r \in {\bf j}$.
		
		With the proof of Theorem \ref{thm:clear} in mind, the preceeding discussion shows that generators $x \in \mathcal{G}(\mathcal{H}(T)) \cdot \iota_{{\bf j}^*,{\bf k}}$ are in bijective correspondence with elements of $\bigcup_{\sigma \in S_m} \Gamma^{\bf j,k}_\sigma(\rho)$, where $\Gamma$ is defined as in (\ref{eq:Gamma}). Applying Lemmas \ref{lem:ugly}, \ref{lem:bs_maslov}, and \ref{lem:gr_again}, we have
		$$
		\Delta_{T, {\bf j}^*,{\bf k}}(t) = (-1)^{|{\bf j}^*|}\varphi^\wedge_n(\rho)^{\bf j}_{\bf k},
		$$
		where the constant in front results from accounting for the ``anchor point'' intersections on the arcs $\alpha^a_j$ with $j \leq n$.
	\end{proof}
	
	\begin{rmk}
	Since the arc diagram $\mathcal{Z}$ has two components, both copies of $\mathcal{Z}_n$, there is an equivalent formulation of $\widehat{BSA}(Y(T), \mathcal{Z})$ as a bimodule $\widehat{BSDA}(Y(T), \mathcal{Z})$. In this set-up, $\widehat{BSDA}(Y(T), \mathcal{Z})$ is a type $D$ structure over the bottom copy of $\mathcal{Z}_n$, and an $\mathcal{A}_\infty$-module over the top copy of $\mathcal{Z}_n$. The type $D$ structure, which is dual to the $\mathcal{A}_\infty$-module perspective in a certain sense, explains the appearance of ${\bf j}^*$ in Theorem \ref{thm:last_one} rather than ${\bf j}$.
	\end{rmk}
	
	\bibliography{main_bib}{}
	\bibliographystyle{amsplain}
	
\end{document}